\documentclass[11pt,oneside]{amsart}

\usepackage{amsmath}

\usepackage{tikz-cd}

\usepackage{fix-cm}

\usepackage[obeyspaces]{url}
\usepackage{hyperref}

\DeclareMathAlphabet{\mathcal}{OMS}{cmsy}{m}{n}

\usepackage[arrow,curve,matrix,tips,2cell]{xy}

\usepackage[hmargin=3.5cm,vmargin=1.5cm, bottom=50pt]{geometry}

\usepackage{anyfontsize}
\usepackage{mathrsfs}
\usepackage{bm}

\usepackage{bbm}

\usepackage{fancyhdr}

\usepackage{tensor}

\usepackage{mathtools}

\usepackage{accents}

\usepackage{amscd}
\usepackage{amssymb}
\usepackage{latexsym}
\usepackage{url}

\usepackage{graphicx}

\usepackage{enumitem}
\usepackage{quiver}

\theoremstyle{plain}
\newtheorem{theorem}{Theorem}[section]
\newtheorem*{theorem*}{Theorem}
\newtheorem{lemma}[theorem]{Lemma}
\newtheorem*{lemma*}{Lemma}
\newtheorem{corollary}[theorem]{Corollary}
\newtheorem{proposition}[theorem]{Proposition}

\theoremstyle{definition}
\newtheorem{remark}[theorem]{Remark}
\newtheorem{definition}[theorem]{Definition}
\newtheorem*{definition*}{Definition}

\newtheorem{question}[theorem]{Question}
\newtheorem*{question*}{Question}

\newtheorem{example}[theorem]{Example}
\newtheorem{examples}[theorem]{Examples}

\numberwithin{equation}{section}

\makeatletter
\def\revddots{\mathinner{\mkern1mu\raise\p@
\vbox{\kern7\p@\hbox{.}}\mkern2mu
\raise4\p@\hbox{.}\mkern2mu\raise7\p@\hbox{.}\mkern1mu}}
\makeatother % ddots gespiegelt
\newcommand{\bgl}{\begin{equation}} %eine Gleichung mit Ziffer
\newcommand{\egl}{\end{equation}}
\newcommand{\bgloz}{\begin{equation*}} %eine Gleichung ohne Ziffer
\newcommand{\egloz}{\end{equation*}}
\newcommand{\bgln}{\begin{eqnarray}} %mehrere Gleichungen mit Ziffer
\newcommand{\egln}{\end{eqnarray}}
\newcommand{\bglnoz}{\begin{eqnarray*}} %mehrere Gleichungen ohne Ziffer
\newcommand{\eglnoz}{\end{eqnarray*}}
\newcommand{\btheo}{\begin{theorem}}
\newcommand{\etheo}{\end{theorem}}
\newcommand{\btheooz}{\begin{theorem*}}
\newcommand{\etheooz}{\end{theorem*}}

\newcommand{\blemma}{\begin{lemma}}
\newcommand{\elemma}{\end{lemma}}
\newcommand{\blemmaoz}{\begin{lemma*}}
\newcommand{\elemmaoz}{\end{lemma*}}
\newcommand{\bproof}{\begin{proof}}
\newcommand{\eproof}{\end{proof}}
\newcommand{\bbew}{\begin{beweis}}
\newcommand{\ebew}{\end{beweis}}
\newcommand{\bremark}{\begin{remark}}
\newcommand{\eremark}{\end{remark}}
\newcommand{\bdefin}{\begin{definition}}
\newcommand{\edefin}{\end{definition}}
\newcommand{\bdefinoz}{\begin{definition*}}
\newcommand{\edefinoz}{\end{definition*}}
\newcommand{\bex}{\begin{example}}
\newcommand{\eex}{\end{example}}
\newcommand{\bexs}{\begin{examples}}
\newcommand{\eexs}{\end{examples}}

\newcommand{\bprop}{\begin{proposition}}
\newcommand{\eprop}{\end{proposition}}
\newcommand{\bcor}{\begin{corollary}}
\newcommand{\ecor}{\end{corollary}}
\newcommand{\bfa}{\begin{cases}} %Fallunterscheidung
\newcommand{\efa}{\end{cases}}

\newcommand{\bquestion}{\begin{question}}
\newcommand{\equestion}{\end{question}}
\newcommand{\bquestionoz}{\begin{question*}}
\newcommand{\equestionoz}{\end{question*}}
\newtheorem{introtheorem}{Theorem}
\renewcommand{\theintrotheorem}{\Alph{introtheorem}}
\newtheorem{introcor}[introtheorem]{Corollary}

\NewDocumentEnvironment{manual}{O{introtheorem}m}
 {%
  \addtocounter{introtheorem}{-1}%
  \renewcommand\theintrotheorem{#2}%
  \begin{#1}
 }
 {\end{#1}}
\makeatletter
\DeclareRobustCommand{\thmprime}{%
  \begingroup
  \expandafter\in@\expandafter b\expandafter{\f@series}%
  \ifin@ \boldmath \fi
  $\m@th{}^{\prime}$%
  \endgroup
}
\makeatother
\newcommand{\cE}{\mathcal E}
\newcommand{\cF}{\mathcal F}
\newcommand{\cG}{\mathcal G}

\newcommand{\cL}{\mathcal L}

\newcommand{\cO}{\mathcal O}

\newcommand{\cU}{\mathcal U}
\newcommand{\cV}{\mathcal V}

\newcommand{\cZ}{\mathcal Z}

\def\Cz{\mathbb{C}}

\def\Nz{\mathbb{N}}

\def\Lz{\mathbb{L}}

\def\Qz{\mathbb{Q}}
\def\Rz{\mathbb{R}}

\def\Zz{\mathbb{Z}}

\newcommand{\fC}{\mathfrak C}

\newcommand{\fS}{\mathfrak S}

\newcommand{\mfd}{\mathfrak d}

\newcommand{\mft}{\mathfrak t}

\newcommand{\mfv}{\mathfrak v}

\newcommand{\an}[1]{``#1''} % Anfuehrungsstriche
\newcommand{\ti}{\tilde}

\newcommand{\ma}{\mapsto} % wird abgebildet auf
\newcommand{\onto}{\twoheadrightarrow} % surjektiv
\newcommand{\into}{\hookrightarrow} % injektiv
\newcommand{\isom}{\xrightarrow{\raisebox{-1ex}[0ex][0ex]{$\sim$}}} % isom
\newcommand{\Rarr}{\Rightarrow} % Folgerung
\newcommand{\Larr}{\Leftarrow} % Folgerung
\def\SEMI{\mbox{$\times\kern-2pt\vrule height5pt width.6pt \kern3pt $}}

\newcommand{\Spec}{{\rm Spec\,}} % Spektrum

\newcommand{\id}{{\rm id}}

\DeclareMathOperator{\Ind}{Ind}

\DeclareMathOperator{\ab}{Ab}
\renewcommand{\dim}{{\rm dim}\,}
\newcommand{\rk}{{\rm rk}\,}
\newcommand{\img}{{\rm im\,}}
\newcommand{\reg}{^\times} % regulaer
\newcommand{\lspan}{{\rm span}} % linearer Aufspann
\newcommand{\clspan}{\overline{\lspan}} % Abschluss des linearen Auspanns
\newcommand{\ilim}{\varinjlim} % induktiver Limes
\newcommand{\lge}{\left\{} % links geschweift
\newcommand{\rge}{\right\}} % rechts geschweift
\newcommand{\lsp}{\left\langle} % links spitz
\newcommand{\rsp}{\right\rangle} % links spitz
\newcommand{\gekl}[1]{\lge #1 \rge} % geschweifte Klammer
\newcommand{\spkl}[1]{\lsp #1 \rsp} % spitze Klammer
\newcommand{\suchthat}{\;\ifnum\currentgrouptype=16 \middle\fi|\;}
\newcommand{\menge}[2]{\gekl{ #1 \suchthat #2 }} % Menge
\def\bf1{\mathbf{1}}
\DeclareMathOperator{\dom}{dom}
\DeclareMathOperator{\ran}{ran}

\newcommand{\bmx}{\bm{x}}

\newcommand{\bmF}{\bm{F}}

\newcommand{\oset}[2]{%
  \mathop{#2}\limits^{\vbox to -1.66ex{%
  \kern -1.4ex\hbox{$#1$}\vss}}}

\newcommand{\tipreceq}{\oset{\sim}{\preceq}}
\newcommand{\tiprec}{\oset{\sim}{\prec}}

\newcommand{\crL}{\mathscr{L}}

\newcommand{\R}{\mathscr{R}}

\newcommand{\Y}{\mathscr{Y}}

\DeclareMathOperator{\cs}{\mathrm{C}^\ast}
\DeclareMathOperator{\csr}{\mathrm{C}^\ast_r}
\newcommand{\Ktop}{\mathrm{K}^{\mathrm{top}}}
\DeclareMathOperator{\K}{\mathrm{K}}
\DeclareMathOperator{\Hlgy}{\mathrm{H}}
\DeclareMathOperator{\KK}{\mathrm{KK}}

\newcommand{\acts}{\mathbin{\curvearrowright}}
\newcommand{\rightacts}{\mathbin{\curvearrowleft}}
\newcommand{\down}{\mathop{\downarrow}}
\newcommand{\bigdown}{\mathop{\big\downarrow}}

\DeclareMathOperator{\Res}{Res}

\DeclareFontFamily{U}{mathb}{\hyphenchar\font45}
\DeclareFontShape{U}{mathb}{m}{n}{
      <5> <6> <7> <8> <9> <10>
      <10.95> <12> <14.4> <17.28> <20.74> <24.88>
      mathb10
      }{}
\DeclareSymbolFont{mathb}{U}{mathb}{m}{n}

\DeclareMathSymbol{\sqbullet}{1}{mathb}{"0D}
\usepackage{newtxtext}
\usepackage{newtxmath}

\begin{document}

\pretolerance=10000

\title{Discretisation and independent resolutions of ample groupoids}

\thispagestyle{fancy}

\author[X. Li]{Xin Li}
\address[X.~Li]{School of Mathematics and Statistics, University of Glasgow, University Place, Glasgow G12 8QQ, United Kingdom}
\email{Xin.Li@glasgow.ac.uk}

\author[A. Miller]{Alistair Miller}
\address[A.~Miller]{%
    Department of Mathematics\\
    KU Leuven\\
    Celestijnenlaan 200B\\
 box 2400\\
    3001 Leuven\\
    Belgium}
\email{alistair.miller@kuleuven.be}

\subjclass[2020]{Primary 22A22, 46L80, 19D55; Secondary 20M18, 19K35}

\thanks{This project has received funding from the European Research Council (ERC) under the European Union's Horizon 2020 research and innovation programme (grant agreement No. 817597). The first author was supported by the Independent Research Fund Denmark through Grant 1054-00094B and by the Research Foundation Flanders (FWO) through Project 1212126N}

\begin{abstract}
We develop a general framework for understanding and computing both the groupoid homology of an ample groupoid and the topological K-theory of its reduced $\mathrm{C}^\ast$-algebra, based on two main ideas: discretisation and independent resolutions. Discretisation shows that a special class of ample groupoids we term \emph{independent} groupoids are homologically and K-theoretically equivalent to discrete groupoids.
We introduce the notion of a \emph{resolution} by independent groupoids and provide a recipe for building a controlled independent resolution of a given ample groupoid of interest, leading to a systematic way of studying its homology and K-theory. 
In order to illustrate our general ideas and methods, we work out several concrete examples and applications. Garside categories provide a wide range of examples, including higher rank graphs, self-similar groups and spherical Artin--Tits groups. We also present an application to the homology of Stein's groups. 
\end{abstract}

\maketitle

\tableofcontents

\setlength{\parindent}{0cm} \setlength{\parskip}{0.5cm}

\section{Introduction}
\label{s:intro}

Topological K-theory is an invariant for $\cs$-algebras which plays an important role in Elliott's classification programme \cite{GLNI,GLNII,GL,Win,Whi,CGSTW} as well as non-commutative geometry and its applications to topology and geometry in the context of the Baum--Connes conjecture \cite{BCH,Val}. The goal of the present paper is to develop a general framework which allows us to study and compute this fundamental invariant for $\cs$-algebras of ample groupoids, which is a rich class of $\cs$-algebras covering prominent examples such as AF algebras, Kirchberg algebras satisfying the UCT, Cuntz--Krieger algebras, graph $\cs$-algebras, higher rank graph $\cs$-algebras, semigroup $\cs$-algebras and $\cs$-algebras attached to self-similar groups. Our methods also provide an approach to understanding and determining groupoid homology as introduced in \cite{CM}, which is a fundamental invariant for ample groupoids that has been used successfully to classify topological dynamical systems \cite{GPS95,GMPS08,GMPS10} and exhibits interesting connections to the group homology of topological full groups \cite{Mat12,Mat15,Mat16,SW,Li25}. The latter is a class of groups of dynamical origin with striking properties \cite{JM,Nek18b,SWZ}.

Our framework is based on two main ingredients: discretisation and independent resolutions. 

The idea of discretisation in K-theory goes back to \cite{CEL1}, where the initial motivation was to compute K-theory for certain semigroup $\cs$-algebras. Based on \cite{CEL1}, a conceptual perspective involving topological dynamical systems was developed in \cite{CEL2}, generalizing K-theory formulas obtained in \cite{CEL1} to crossed products of certain group actions on totally disconnected spaces. An alternative perspective using the language of inverse semigroups and their $\cs$-algebras emerged from \cite{Nor}. Both perspectives have then been further developed in \cite{Li22}, which established discretisation for crossed products of certain partial group actions. More recently, \cite{Mil3} developed discretisation for more general groupoids without relying on the presence of partial group actions, but assuming that the groupoids of interest have torsion-free isotropy groups. In the algebraic context, discretisation has been formulated in \cite{ACM}.

Independent resolutions were first introduced in \cite{LNI,LNII}. Their purpose is to describe general ample groupoids in terms of groupoids which are covered by discretisation, enabling us to study and compute K-theory and groupoid homology. 

\begin{introtheorem}[Discretisation in K-theory, see Theorem~\ref{thm:Disc_K}]\label{t:introKtheory}
Let $G$ be the universal groupoid of a countable inverse semigroup $S$ with idempotent semilattice $E$ such that $G$ is Hausdorff and satisfies the Baum--Connes conjecture. Consider the action of $S$ on $E\reg = E \setminus \{0\}$ as a discrete space and assume further that the transformation groupoid $S \ltimes E\reg$ satisfies the Baum--Connes conjecture. Then 
\[ \K_*(\csr(G)) \cong \K_*(\csr(S \ltimes E\reg)). \]
\end{introtheorem}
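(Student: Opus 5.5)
Both groupoids $G$ and $S \ltimes E\reg$ satisfy the Baum--Connes conjecture, so the claimed isomorphism is equivalent to
\[ \Ktop_*(G) \cong \Ktop_*(S \ltimes E\reg). \]
I will therefore work with the left-hand sides of the assembly maps. Here $\Ktop_*(G)$ is realised through the going-down/localisation machinery, or equivalently via the Meyer--Nest framework: it is the K-theory of the cellular approximation of the trivial $G$-algebra $C_0(G^{(0)})$ in the equivariant KK-category. The strategy is to build a $G$-equivariant resolution of $C_0(G^{(0)})$ and compare it, term by term, with one for $S \ltimes E\reg$.

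\textbf{Step 1 (discrete basis for the unit space).} $G^{(0)}$ is the spectrum $\widehat{E}$ of characters. The compact open sets $U_e=\menge{\chi}{\chi(e)=1}$, for $e\in E\reg$, generate its topology. The indicator functions $1_{U_e}$ span $C_c(\widehat E,\Zz)$, but not freely: their relations are inclusion--exclusion relations coming from finite covers.

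\textbf{Step 2 (map of groupoids).} There is a canonical map linking the two groupoids: the continuous map $\widehat E \to$ ``$\Zz[E\reg]$''. More concretely, I will use the $G$-space $X=\menge{(e,\chi)}{\chi(e)=1}\subseteq E\reg\times\widehat E$. The group $G$ acts on the second coordinate and $S$ acts on the first. Then $X\rtimes G$ is Morita equivalent to $S\ltimes E\reg$ twisted by the proper $G$-action on the fibres. In particular, the fibres of $X\to\widehat E$ are the discrete sets $\menge{e}{\chi(e)=1}$, which are directed (filters). This yields a $G$-equivariant map $C_0(\widehat E)\to C_0(X)$. Using the projection $X\to E\reg$, one identifies $\Ktop_*(X\rtimes G)\cong\Ktop_*(S\ltimes E\reg)$ via a Morita/induction argument, because $X\rtimes G$ is equivalent to $S\ltimes E\reg$.

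\textbf{Step 3 (the comparison is an isomorphism).} It remains to show that $C_0(\widehat E)\to C_0(X)$ induces an isomorphism on $\Ktop_*(\,\cdot\rtimes G)$. By the going-down principle, it suffices to check this after restriction to compact open subgroupoids, or to isotropy groups $G_\chi$ acting on fibres. For fixed $\chi$, the fibre is the filter $F_\chi$. I will resolve $C_0(X)$ by the simplicial complex of finite chains in $F_\chi$, namely the nerve of the directed poset. This nerve is contractible because the filter has meets, which gives $\Zz$ in degree zero and nothing else. The inclusion--exclusion (independence-type) relations of Step 1 are exactly what make the corresponding Koszul-type complex exact.

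\textbf{Main obstacle.} The main obstacle is Step 3 when $G$ has torsion in its isotropy. Contractibility of the nerve must then be equivariant for finite subgroups $H\le G_\chi$, so that the equivariant KK-class is an isomorphism and not merely a non-equivariant K-theory isomorphism. I would first try to use the meet operation: the meet of an $H$-orbit in $F_\chi$ is an $H$-fixed point, and this gives an $H$-equivariant contraction. This is where the Baum--Connes hypotheses allow one to pass from $\Ktop$ to reduced $\cs$-algebras on both sides.
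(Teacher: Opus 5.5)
\textbf{There is a genuine gap, and it sits in your Step~2.} The rest of the argument rests on it.

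First, the groupoid $G = S \ltimes \widehat E$ does not act on $X = \{(e,\chi) : \chi(e)=1\}$ through the anchor $(e,\chi)\mapsto \chi$. A germ $[s,\chi]$ only requires $\chi(s^*s)=1$, so $e$ need not lie below $s^*s$. Moreover, two representatives $s,t$ of the same germ can give $ses^* \neq tet^*$.

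Second, the projection $X \to \widehat E$ has the filters $\{e : \chi(e)=1\}$ as fibres, which are usually infinite. So it is not proper, and pulling back does not give a $*$-homomorphism $C_0(\widehat E) \to C_0(X)$.

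Third, the groupoid $S \ltimes X$ that does exist is not Morita equivalent to $S \ltimes E\reg$. It is Morita equivalent to $\coprod_{[e]} S_e \ltimes \widehat E(e)$, whose K-theory is typically larger. Already for $S=E=\{0,f,e\}$ with $f<e$ you get $\Zz^3$ instead of $\Zz^2$ in degree $0$.

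The correct structure is the one in Definition~\ref{def:DiscCorr}. The space $\Omega^0 = X$ carries an action of the discrete groupoid $S \ltimes E\reg$ via the proper map $\rho$, which gives $f_\Omega \colon C_0(E\reg) \to C_0(\Omega^0)$. The groupoid $S \ltimes \widehat E$ acts only on the right of the étale correspondence $\Omega = \Omega^0(S)$. Since the two sides live over different groupoids, there is no single groupoid and no single morphism in its equivariant $\KK$-category to which a going-down principle could be applied.

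\textbf{What is missing is a way to compare $\Ktop_*$ of two different groupoids along an étale correspondence.} The paper uses the functoriality of the ABC spectral sequence from \cite[Corollary~7.1]{Mil3}. This reduces the claim to showing that a chain map is an isomorphism. That chain map goes between the K-theories of the crossed products by the $\cF_{\rm fin}$-projective resolutions of $C_0(E\reg)$ over $S\ltimes E\reg$ and of $C_0(\widehat E)$ over $S \ltimes \widehat E$.

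This is exactly where torsion in the isotropy matters. Those terms are no longer ordinary groupoid homology. Your contractible-nerve argument is a pointwise statement over each $\chi$, and it cannot detect K-theory of crossed products over the non-discrete spaces $\widehat E(e)$. The ``meet of an orbit'' trick addresses a problem that does not arise in this framework.

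The paper instead proceeds in two stages:
\begin{enumerate}
\item Up to Morita equivalence, it models each summand as a discretisation correspondence for the finite inverse semigroup $F_{n,\down}$ acting on a new locally finite weak semilattice $L_n$ (Corollary~\ref{cor:Ln=Iso?} and Lemma~\ref{identification with discretisation}).
\item It then proves the finite case globally (Theorem~\ref{finite ivs discretisation}). It writes $L$ as an increasing union of finite $F$-invariant $\down$-closed subsets, and splits $\Omega^0_i = \Omega_i^= \amalg \Omega_i^<$ into an identity part and a nilpotent part.
\end{enumerate}
Nothing in your Step~3 supplies either of these ingredients.
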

This reduces the K-theory computation for $G$ to that of the discrete groupoid $S \ltimes E\reg$. The computation breaks down further by Morita equivalence invariance to a K-theory computation for the isotropy groups of the discrete groupoid. Explicitly, in this case we obtain 
\[ \K_*(\csr(G)) \cong  \bigoplus_{[e] \in S \backslash E\reg} \K_*(\csr(S_e)), \]
where $S_e = \{ s \in S \mid s^* s = s s^* = e \}$ is the maximal subgroup at a non-zero idempotent $e \in E\reg$. Note that $S \ltimes E\reg$ satisfies the Baum--Connes conjecture if and only if $S_e$ satisfies the Baum--Connes conjecture for each $e \in E\reg$.

It is worth pointing out that the K-theoretic isomorphism in Theorem~\ref{t:introKtheory} is induced by a concrete \'etale groupoid correspondence (see Definitions~\ref{d:correspondence} and \ref{d:discretisation}).

Previous work only established this discretisation formula for special inverse semigroups (strongly $0$-$E$-unitary ones, see \cite{Li22}) or needed the extra assumption that the isotropy groups of $G$ are torsion-free (see \cite{Mil3}). In contrast, our discretisation result (Theorem~\ref{thm:Disc_K}) is close to the optimal level of generality. The Baum--Connes conjecture is used because in the actual proof, an isomorphism is constructed for the left-hand side of the Baum--Connes conjecture (the reader may consult \cite{Tu,Tu2,PY22,BP24,PY25} and the references therein for more information about the Baum--Connes conjecture for groupoids). This explains the assumptions \an{countable} and \an{Hausdorff}, because there is currently no suitable formulation of the Baum--Connes conjecture available without these hypotheses. For example, if a second countable Hausdorff \'etale groupoid $G$ is amenable or has the Haagerup property, then the Baum--Connes conjecture is valid for $G$ by \cite{Tu}. We note however that discretisation is still natural and desirable in the non-Hausdorff setting, and is known to hold for some examples (such as in the setting of Example \ref{ex:ssg} as used in \cite{MS}).

To apply Theorem \ref{t:introKtheory} in practical circumstances it is useful to be able to recognise when an ample groupoid $G$ can be realised as the universal groupoid of an inverse semigroup, and what data one should have to perform this realisation. Suppose $G = S \ltimes X$ for some inverse semigroup action $S \acts X$. A regular basis \cite[\S 2]{CEL2} is a set $\mathcal U$ of non-empty compact open sets in $X$ such that $\mathcal U \cup \{\emptyset\}$ is closed under intersection, generates the topology of $X$ in a suitable sense and satisfies the \emph{independence} condition: if $U, U_1, \dots, U_n \in \mathcal U$ satisfy $U \subseteq \bigcup_{i=1}^n U_i$, then $U \subseteq U_i$ for some $i$. 
If $X$ admits an $S$-invariant regular basis, then $G = S \ltimes X$ may be identified with the universal groupoid of an inverse semigroup. Thus Theorem \ref{t:introKtheory} admits the following dynamical reformulation.

\begin{manual}[introtheorem]{A\thmprime}[Discretisation in K-theory, a dynamical perspective]\label{t:introKtheorydynamical}
Suppose $S \acts X$ is an action of a countable inverse semigroup $S$ on a second countable totally disconnected locally compact Hausdorff space $X$ such that $G = S \ltimes X$ is Hausdorff and satisfies the Baum--Connes conjecture. Suppose that $X$ admits an $S$-invariant regular basis $\mathcal U$ of compact open sets. Consider the action of $S$ on $\mathcal U$ as a discrete space and suppose that for each $U \in \mathcal U$ the isotropy group $S_U = (S \ltimes \mathcal U)^U_U$ satisfies the Baum--Connes conjecture. Then 
\[ \K_*(\csr(G)) \cong \K_*(\csr(S \ltimes \mathcal U)) \cong \bigoplus_{[U] \in S \backslash \mathcal U} \K_*(\csr(S_U)). \]
\end{manual}

Groupoids which admit an $S$-invariant regular basis are examples of a class we term \emph{independent} groupoids (see Definition \ref{d:independent}). However, this is a very special class of groupoids, and an ample groupoid $G$ of interest will likely not be independent. Instead, we look for a \emph{resolution} of $G$ by independent groupoids. This is an ``exact'' sequence of independent groupoids $(G_k = S_k \ltimes X_k)_{k \geq 0}$ 
\[ \cdots \to G_k \to \cdots \to G_0 \to G \to \emptyset, \] 
i.e. we have closed invariant subsets $C_k \subseteq X_k$ together with isomorphisms $G \cong G_0 |_{C_0}$, $G_k|_{X_k \setminus C_k} \cong G_{k+1}|_{C_{k+1}}$ (see Definition \ref{d:resolution}). Under additional exactness assumptions, an independent resolution produces an exact sequence at the level of reduced groupoid $\cs$-algebras. When the resolution has finite length (i.e. eventually $G_k = \emptyset$), a series of six-term exact sequences relates the K-theory of $G$ to that of the independent groupoids $G_k$, which can be computed through Theorem \ref{t:introKtheory}. Independent resolutions always exist for abstract reasons, but this does not guarantee finite length resolutions.

Dynamical systems $S \acts X$ may often be studied through a naturally occurring $S$-invariant collection of compact open subsets in $X$. A \emph{representation} (see Definition \ref{def:GenIndLFWS}) of a semilattice $E$ on $X$ is a homomorphism 
\[ U \colon E \to \cO_c(X) \]
sending $0$ to $\emptyset$ which realises each $e \in E$ as a compact open set $U(e)$ in $X$. We consider representations $U \colon E \to \cO_c(X)$ of semilattices $E$ equipped with an action of $S$ by partial order automorphisms (see Definition \ref{d:S acts on L}) such that $U$ is $S$-equivariant; if $U$ is faithful and its image is $S$-invariant, $E$ will inherit such an action. It is reasonable to hope that one's favourite dynamical system $S \acts X$ admits a naturally occurring $S$-equivariant representation $U \colon E \to \cO_c(X)$ of a semilattice $E$ which generates the set of compact open sets in $X$ (we say $U$ is \emph{generating}), but in many cases $U$ will fail the following independence condition:
\begin{equation}\label{eq:independenceintro}
\text{if $e, e_1, \dots, e_n \in E$ satisfy $U(e) \subseteq \bigcup_{i=1}^n U(e_i)$, then $e \leq e_i$ for some $i$.}
\end{equation}
The key idea that we exploit to build independent resolutions is that in many natural examples, the independence condition \eqref{eq:independenceintro} will fail in a systematic way. A \emph{concrete finite cover} of $e \in E$ is a finite set $F \subseteq E$ below $e$ with $U(e) = \bigcup_{f \in F} U(f)$. Each failure of the independence condition is witnessed by a (non-trivial) concrete finite cover. We consider \emph{systems}
\[ \R = \{ \R(e) \mid e \in E \}, \]
of finite covers, where each $\R(e)$ is a set of concrete finite covers of $e$ (see Definition \ref{d:system}). It turns out that an $S$-invariant system $\R$ which is large enough to explain every failure of independence (we say $\R$ is \emph{thorough}, see Definition \ref{d:thorough}) contains enough data to explicitly construct an independent resolution of $G = S \ltimes X$. This gives us control over the independent resolution in terms of the system $\R$ of finite covers, and in particular enables us to construct finite length independent resolutions. 

\begin{introtheorem}[Independent resolutions, see Theorem \ref{t:resolution}]\label{t:introresolutions}
Let $G = S \ltimes X$ be an ample groupoid and let $U \colon E \to \mathcal O_c(X)$ be an $S$-equivariant generating representation of a semilattice $E$ by compact open subsets in $X$. Let $\R$ be a thorough $S$-invariant system of concrete finite covers in $E$ with respect to $U$. Then there is an independent resolution 
\[ \cdots \to G_k \to \cdots \to G_0 \to G \to \emptyset \]
of $G$ such that for each $k \geq 0$, $G_k = S \ltimes X_k$ for an $S$-space $X_k$ admitting an $S$-invariant regular basis, and $G_{k+1} = \emptyset$ for $k \geq \sup_{e \in E} \# \{F \in \R(e) \mid F \text{ non-trivial} \} $.
\end{introtheorem}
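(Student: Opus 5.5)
The approach is to build the resolution explicitly out of the system $\R$, in the spirit of a Koszul/Čech-type complex. For each $k \geq 0$, I would define an auxiliary semilattice $E_k$. Its non-zero elements are pairs $(e, \mathcal F)$, where $e \in E$ and $\mathcal F \subseteq \{F \in \R(e) \mid F \text{ non-trivial}\}$ has $\#\mathcal F = k$. The element $(e,\mathcal F)$ should be thought of as ``$e$ with the covers in $\mathcal F$ removed''. Its concrete model is the compact open set
\[ U(e) \setminus \bigcup_{F \in \mathcal F} \bigcup_{f \in F} U(f) \]
viewed in a suitable universal model. Since $U(e) = \bigcup_{f\in F}U(f)$, this set is empty in $X$, so the model must live in an ``unrelativised'' space.

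Concretely, I would take $X_k$ to be the spectrum of the semilattice $E_k$, or the space of characters of $E$ that are not forced to respect the covers. I would then make $X_k$ an $S$-space via the $S$-action on $E$ together with the $S$-invariance of $\R$, which ensures $s.\R(e)=\R(s.e)$. The images of the elements $(e,\mathcal F)$ form an $S$-invariant family of compact open sets closed under intersection. By construction, they satisfy the independence condition \eqref{eq:independenceintro}, since spectra of semilattices always carry a regular basis given by the principal sets. This yields the independent groupoids $G_k = S \ltimes X_k$.

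Next I would identify the closed invariant subsets $C_k \subseteq X_k$. Let $C_k$ consist of those points (characters) which respect every cover in $\R(e)$ other than the ones already in $\mathcal F$, i.e.\ which are ``$\R$-tight'' relative to the remaining covers. The required isomorphisms come in two parts.
\begin{itemize}[leftmargin=*]
\item For $G \cong G_0|_{C_0}$: a character of $E$ that respects all covers in $\R$ is, by thoroughness, a character respecting all concrete finite covers. Because $U$ is generating, such characters correspond bijectively and $S$-equivariantly to points of $X$.
\item For $G_k|_{X_k\setminus C_k} \cong G_{k+1}|_{C_{k+1}}$: a point of $X_k \setminus C_k$ fails some cover $F \notin \mathcal F$. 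I would match it with the point of $X_{k+1}$ that adjoins $F$ to $\mathcal F$.
\end{itemize}
To make the second matching a genuine homeomorphism, the assignment has to be canonical, so I would not choose $F$ freely. Instead I would organise $X_k$ via a Čech/Koszul-style bookkeeping, recording which covers have been removed so that the failed cover is determined. Checking that these maps are $S$-equivariant homeomorphisms is then a matter of verifying them on the basic compact open sets.

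The length bound is immediate from this construction. If $k$ exceeds $\sup_e \#\{F \in \R(e)\text{ non-trivial}\}$, there are no admissible $\mathcal F$ with $\#\mathcal F = k$, so $E_k=\{0\}$ and $G_k=\emptyset$. This gives $G_{k+1}=\emptyset$ for $k \ge \sup_e \#\{\dots\}$.

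The main obstacle is the middle step. One must show that $X_k\setminus C_k$ (open) and $C_{k+1}$ (closed) are genuinely identified as $S$-spaces, and that $C_k$ is closed. Closedness uses finiteness of each cover. The delicate point is that thoroughness is needed to guarantee that every failure of independence in the restricted spaces is again explained by a cover in $\R$. My first attempt would be an induction on $k$. At each stage I would use thoroughness to show that the family of basic sets of $X_{k+1}$ restricted to $C_{k+1}$ matches exactly the basic sets of $X_k\setminus C_k$. I would then conclude by comparing spectra of the corresponding semilattices.
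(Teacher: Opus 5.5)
Your identification of $X$ with the characters of $E$ that respect every cover in $\R$ is correct; it is Lemma~\ref{l:thorough} combined with Lemma~\ref{lem:X=whV}. Beyond that there is a genuine gap, and it starts with the definition of $E_k$.

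\textbf{$E_k$ is not a semilattice.} The elements $(e,\cF)$ with $\#\cF = k$ exactly are not closed under meets. In $\Zz E \cong C_c(\widehat E,\Zz)$ one has $(e\mid F)\,(e\mid F') = (e\mid \{F,F'\})$, and more generally $(e\mid\cE)(e'\mid\cF) = (ee'\mid ee'\down(\cE\cup\cF))$, which involves a varying number of covers. So already $E_1$ fails to be a semilattice, and you do not get a spectrum with a regular basis.

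\textbf{Closing under meets breaks the unordered count.} Closing $E_1$ under meets forces you to the paper's $L_1 = \{(l\mid\cF) : \emptyset\neq\cF\subseteq\R(l)\text{ finite}\}\cup\{0\}$. The unordered count then fails at the next stage. Take a row-finite source-free $2$-graph. There $\widehat{L_1}\setminus C_1$ consists of the principal characters at $(\lambda\mid\{\lambda\Lambda^{(1)}\})$ and at $(\lambda\mid\{\lambda\Lambda^{(2)}\})$, i.e.\ two copies of $\Lambda$. Your $E_2$ has only one element $(\lambda,\{\lambda\Lambda^{(1)},\lambda\Lambda^{(2)}\})$ per $\lambda$. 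This is why the paper's $L_2$ consists of both $(\lambda\mid\{1\}\mid\{2\})$ and $(\lambda\mid\{2\}\mid\{1\})$ (Example~\ref{ex:HRG5}).

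\textbf{The crucial step is left unresolved.} The order in which covers are removed has to be recorded, because a point of $X_k\setminus C_k$ can fail several covers at once. You noticed this non-canonicity but deferred it to unspecified ``\v{C}ech/Koszul-style bookkeeping''. That step is the whole content of the theorem. As it stands, the proposal does not construct the isomorphisms $G_k|_{X_k\setminus C_k}\cong G_{k+1}|_{C_{k+1}}$.

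\textbf{The missing idea.} Treat $(L_k,\R_k)$ itself as new input and iterate: set $L_{k+1} = L_{L_k,\R_k}$, with the induced system $\R_{k+1}$ built from the covers $R(l\mid\cF\mid F') = \{(l\mid\cF\cup\{F'\})\}\cup\{(f\mid f\down\cF) : f\in F'\}$ (Definition~\ref{d:recursive construction}). The homeomorphisms $C_{k+1}\cong O_k$ are then obtained by Stone duality from the exactness of $\cdots\to\Zz L_2\to\Zz L_1\to\Zz L_0$ (Theorem~\ref{thm:IndRes}), via $\Zz L_{k+1}/\ker d_{k+1}\cong \img d_{k+1} = \ker d_k$, rather than by matching points by hand.

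\textbf{Where thoroughness enters.} Thoroughness is used only once, through Lemma~\ref{l:thorough}, to identify $\ker(\Zz L_0\to C_c(X,\Zz))$ with $\img d_1$. Your plan to invoke thoroughness again at each stage cannot work. At later stages what is needed is that the covers in $\R_k$ generate all of $\ker d_k$. That is exactly exactness at $\Zz L_k$, an algebraic property of the relative-complement construction proved by the reduction argument at a maximal $l_{i_0}$, and it does not follow from thoroughness.

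\textbf{Length bound.} In the paper the length bound also comes from the recursion: the non-trivial covers in $\R_{k+1}(l\mid\cF)$ inject into a proper subset of the non-trivial covers in $\R_k(l)$.
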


For groupoids of source-free row-finite higher rank graphs (see \cite{KuPa} and also \cite{RSY03,RSY04}), we always obtain finite length independent resolutions, see Example \ref{ex:HRG5}. Combined with discretisation, our work provides an alternative perspective (see Example \ref{ex:HRG6}) on K-theory computations carried out in \cite{FKPS}. For groupoids attached to self-similar groups, K-theory computations have been worked out in \cite{Nek,MS}. In this case, we obtain an independent resolution of length $1$ (see Example \ref{ex:ssg}), which combined with discretisation leads to a conceptual explanation of the results in \cite{Nek,MS}. We note that higher rank graphs only fit into the precise setup of Theorem \ref{t:introresolutions} if they are singly aligned. Following \cite[\S~8]{Mil}, to treat a more general finitely aligned setup, we replace semilattices with a more general notion of \emph{locally finite weak semilattices} (see \S~\ref{ss:LFWS}). Theorems \ref{t:introKtheory}, \ref{t:introKtheorydynamical} and \ref{t:introresolutions} are all established in the context of locally finite weak semilattices in the main text.

We highlight that Theorem \ref{t:introresolutions} not only helps us to understand ample groupoids, but also discrete groups. An \emph{Artin--Tits group} $\Gamma$ is a group given by generators $S$ and relations $R$ of the form \[\underbrace{stst\cdots}_{\text{$m_{s,t}$ elements}} = \underbrace{tsts\cdots}_{\text{$m_{s,t}$ elements}}\] for parameters $m_{s,t} = m_{t,s} \in \{2,3,\dots,\infty\}$ for each $s,t \in S$, with the convention that $m_{s,t} = \infty$ imposes no relation. The monoid $M_\Gamma = \langle S \mid R \rangle^+$ with the same presentation is the corresponding \emph{Artin--Tits monoid}. The Artin--Tits group $\Gamma$ is said to be \emph{spherical} if the quotient group (the \emph{Coxeter group}) determined by the relations $s^2 = 1$ is finite. In this case $M_\Gamma$ embeds in $\Gamma$. This gives rise to the semilattice $E_0 \coloneq \menge{\gamma M_{\Gamma}}{\gamma \in \Gamma} \cup \{\emptyset\}$ of subsets of $\Gamma$, where the partial order is given by inclusion. $\Gamma$ acts on $E_0$ by order automorphisms via left multiplication. Theorem \ref{t:introresolutions} applies to the trivial representation of $E_0$ on the trivial $\Gamma$-space $\{*\}$ together with a naturally constructed system $\R$ of finite covers, yielding a finite length independent resolution of $\Gamma = \Gamma \ltimes \{*\}$ with trivial isotropy groups. We obtain the following corollary.

We say the \emph{discretisation formula with arbitrary coefficients} is valid for a countable discrete group $\Gamma$ if for every countable semilattice $E$, every $\Gamma$-action $\Gamma \acts E$ by order automorphisms and every separable $\Gamma$-$\cs$-algebra $A$, there is an isomorphism
\[
 \K_*(\Gamma \ltimes_r (A \otimes C_0(E\reg)) ) \cong \K_*( \Gamma \ltimes_r (A \otimes C_0(\widehat{E})) ).
\]
Here $\widehat E$ is the character space of $E$ and $\Gamma$ acts diagonally on $A \otimes C_0(E\reg)$ and $A \otimes C_0(\widehat{E})$.

\begin{introcor}[see Corollary~\ref{cor:Garside_BC} and Example~\ref{ex:Artin}]\label{cor:Gars_BC_intro}
Let $\Gamma'$ be a subgroup of a spherical Artin--Tits group. Then $\Gamma'$ satisfies the Baum--Connes conjecture with arbitrary coefficients if and only if the discretisation formula with arbitrary coefficients is valid for $\Gamma'$.
\end{introcor}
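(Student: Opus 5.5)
Both directions go through the finite-length independent resolution of $\Gamma$ from Theorem~\ref{t:introresolutions}, and both are proved at the level of the left-hand side of the Baum--Connes assembly map with coefficients.

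\textbf{Setup.} Fix a spherical Artin--Tits group $\Gamma \supseteq \Gamma'$. Apply Theorem~\ref{t:introresolutions} to the trivial representation of $E_0=\{\gamma M_\Gamma\}\cup\{\emptyset\}$ on $\{*\}$, using the natural $\Gamma$-invariant thorough system $\R$ of finite covers coming from the Garside structure. This gives a finite-length independent resolution
\[
\cdots \to G_k \to \cdots \to G_0 \to \Gamma \to \emptyset,
\qquad G_k=\Gamma\ltimes X_k .
\]
Each $X_k$ admits a $\Gamma$-invariant regular basis $\mathcal U_k$, and the discrete groupoids $\Gamma\ltimes\mathcal U_k$ have trivial isotropy. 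Restricting the action to $\Gamma'$ keeps every property except finiteness of orbits: the bases $\mathcal U_k$ stay $\Gamma'$-invariant and regular, and $\Gamma'\ltimes \mathcal U_k$ still has trivial isotropy. So we get a finite-length independent resolution of $\Gamma'$ by groupoids $\Gamma'\ltimes X_k$. Each $X_k = \widehat{\mathcal U_k\cup\{0\}}$ is the character space of a semilattice with a $\Gamma'$-action, and $\Gamma'$ acts freely on $\mathcal U_k$. Since the actions are free, $\Gamma'\ltimes_r(A\otimes C_0(\mathcal U_k))$ is Morita equivalent to a direct sum of copies of $A\otimes\mathcal K$ indexed by orbits. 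Hence the Baum--Connes conjecture with coefficients in $A\otimes C_0(\mathcal U_k)$ holds automatically, since it is a proper action.

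\textbf{Direction ``BC $\Rightarrow$ discretisation''.} Assume Baum--Connes with coefficients for $\Gamma'$. The discretisation theorem (Theorem~\ref{thm:Disc_K}, in its version with coefficients) is proved on the left-hand side: the correspondence $E\reg \leftarrow \cdot \rightarrow \widehat E$ induces an isomorphism of topological K-theory. So I would check that its proof extends to $\Gamma\ltimes(A\otimes C_0(\cdot))$ for a general semilattice $E$ with a $\Gamma'$-action. The key input is that restricting to finite subgroups, or to compact open subgroupoids, reduces everything to the independent case, where the regular-basis argument works. Once this is in place, Baum--Connes for both sides, which holds by hypothesis since both are crossed products by $\Gamma'$, turns the top-K isomorphism into the claimed K-theory isomorphism.

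\textbf{Direction ``discretisation $\Rightarrow$ BC''.} This is the main obstacle, and here the resolution is used. Fix $A$. Under the exactness assumptions, which hold because $\Gamma'$ acts through discrete data, the resolution gives short exact sequences of crossed products relating $\Gamma'\ltimes_r A$, $\Gamma'\ltimes_r (A\otimes C_0(X_k))$ and the restrictions to the closed invariant sets $C_k$. The same sequences exist on the topological K-theory side, and the assembly maps are natural with respect to both. Apply the discretisation formula with coefficients to $E=\mathcal U_k\cup\{0\}$. It identifies $\K_*(\Gamma'\ltimes_r(A\otimes C_0(X_k)))$ with $\K_*(\Gamma'\ltimes_r(A\otimes C_0(\mathcal U_k)))$, where assembly is an isomorphism by freeness, and it does so compatibly with the top-K isomorphism from the previous step. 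So assembly is an isomorphism for each $G_k$ with coefficients $A$. Then argue by descending induction on $k$, starting from $G_k=\emptyset$ for large $k$. At each stage a five lemma on the six-term sequences shows that assembly is an isomorphism for the restrictions to $C_k$ and $X_k\setminus C_k$, and finally for $G_0|_{C_0}\cong\Gamma'$. The delicate point is checking that the discretisation isomorphism agrees with the one induced by assembly, rather than being an abstract isomorphism. I would handle this by using that it comes from a concrete groupoid correspondence, which is functorial for assembly.
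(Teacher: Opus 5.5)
There is a genuine gap in the direction ``discretisation $\Rightarrow$ BC''.

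As defined, the hypothesis only asserts that there exists \emph{some} isomorphism $\K_*(\Gamma' \ltimes_r (A \otimes C_0(E\reg))) \cong \K_*(\Gamma' \ltimes_r (A \otimes C_0(\widehat E)))$. It does not say that this isomorphism is the map induced by the discretisation correspondence. Your five-lemma induction needs exactly that. Consider the square comparing assembly for $A\otimes C_0(\mathcal U_k)$ and for $A\otimes C_0(X_k)$. The left-hand (topological K-theory) vertical map is an isomorphism, and so is the discrete-side assembly map. So assembly for $X_k$ is an isomorphism if and only if the map induced by $\Omega$ is one. An abstract isomorphism between the two K-groups says nothing about whether that particular map is bijective. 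Your remedy, ``it comes from a concrete groupoid correspondence'', just assumes the stronger hypothesis. What your argument proves is the variant in which the classes $\bmx_i$ are required to induce isomorphisms (cf.\ part (iii) of the last corollary in \S\ref{ss:AppIndRes_K}), not the stated equivalence.

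The missing idea is the reformulation of Baum--Connes with arbitrary coefficients from \cite[\S~5.3]{CMR}. $\Gamma'$ satisfies it if and only if, for every separable $\Gamma'$-$\cs$-algebra $A$ with $\K_*(F\ltimes_r A)=0$ for all finite subgroups $F$, one has $\K_*(\Gamma'\ltimes_r A)=0$. Then only vanishing has to be propagated, and abstract isomorphisms suffice. By the discretisation formula and Green imprimitivity, $\K_*(\Gamma'\ltimes_r(A\otimes C_0(\widehat{E_i}))) \cong \bigoplus_{[e]}\K_*(\Gamma'_e\ltimes_r A)=0$, because the stabilisers are trivial. Exactness of the finite sequence $0\to A_k\to\cdots\to A_0\to\Gamma'\ltimes_r A\to 0$ then forces $\K_*(\Gamma'\ltimes_r A)=0$. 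This is the paper's route (Corollary~\ref{cor:EquiBC} with Remark~\ref{rem:FLIndResSubgroups}).

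Your justification of exactness is also wrong. Exactness of these reduced crossed-product sequences for arbitrary $A$ does not follow from ``discrete data''. It needs $\Gamma'$ to be an exact group. That holds because spherical Artin--Tits groups are linear \cite{CW,Dig}, hence exact \cite{GHW}, and exactness passes to subgroups.

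For the direction ``BC $\Rightarrow$ discretisation'', your plan to extend Theorem~\ref{thm:Disc_K} to coefficients (essentially a going-down argument) is plausible but left unchecked. The paper simply invokes \cite[Theorem~3.12]{CEL2}.
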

\setlength{\parindent}{0cm} \setlength{\parskip}{0cm}
Actually, we prove this result in the more general case of exact subgroups of enveloping groups of Garside monoids (Corollary~\ref{cor:Garside_BC}). 
Note that Braid groups --- a particular case of spherical Artin--Tits groups --- are shown to satisfy the Baum--Connes conjecture with arbitrary coefficients in \cite{Schick}. However, the Baum--Connes conjecture with coefficients is open for general spherical Artin--Tits groups. (For related results on the Baum--Connes conjecture for Artin--Tits groups, see \cite{HO} and the references therein.) 
\setlength{\parindent}{0cm} \setlength{\parskip}{0.5cm}

All of the examples mentioned so far are particular cases of groupoids attached to Garside categories (see Definitions~\ref{d:Garside1} and \ref{d:Garside2}) as in \cite{Li21a,Li21b}. For these groupoids, we obtain finite length independent resolutions as soon as the underlying Garside category satisfies appropriate finiteness conditions. In this way, our work provides a unifying perspective on all the examples above. 

Let us turn to discretisation in groupoid homology. In this setting, unlike for K-theory, no extra hypotheses (such as satisfaction of the Baum--Connes conjecture, Hausdorffness, or finiteness requirements) are needed, and no exactness is required in applications of independent resolutions. The following discretisation formula is proved in \cite[Example 3.10]{Mil2}.

\begin{introtheorem}[Discretisation in homology]\label{t:introHomology}
Let $G$ be the universal groupoid of a countable inverse semigroup $S$ with idempotent semilattice $E$. Then 
\[ \Hlgy_*(G) \cong \Hlgy_*(S \ltimes E\reg) \cong \bigoplus_{[e] \in S \backslash E\reg} \Hlgy_*(S_e), \]
where $S_e = \{ s \in S \mid s^* s = s s^* = e \}$ is the maximal subgroup at a non-zero idempotent $e \in E\reg$.
\end{introtheorem}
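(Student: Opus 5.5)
We will prove the two isomorphisms separately. The second one is formal: the discrete groupoid $S \ltimes E\reg$ has unit space $E\reg$, a discrete set, and its orbits are the classes $[e] \in S \backslash E\reg$. Homology of an étale groupoid is additive over a decomposition of the unit space into clopen invariant pieces. A discrete transitive groupoid is Morita (Kakutani) equivalent to its isotropy group at any unit, and the isotropy group at $e$ is $S_e$; the germ relation collapses because $E\reg$ is discrete, so the isotropy at $e$ is $\{s \mid s^*s = ss^* = e\}$ modulo $s \sim se$, which is $S_e$. Groupoid homology is invariant under Morita equivalence. Together these give $\Hlgy_*(S \ltimes E\reg) \cong \bigoplus_{[e]} \Hlgy_*(S_e)$.

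For the first isomorphism, set $X = \widehat E$, the space of characters (filters) of $E$, so that $G = S \ltimes \widehat E$. The compact open sets $D_e = \{\chi \mid \chi(e) = 1\}$ for $e \in E\reg$ form an $S$-invariant regular basis. Independence holds because a character of the universal space can be the principal filter of $e$: if $D_e \subseteq \bigcup_i D_{e_i}$, then the point $\chi_e$ (with $\chi_e(f) = 1$ iff $f \geq e$) lies in some $D_{e_i}$, so $e \leq e_i$. I would then write down an explicit chain-level comparison. The groupoid chains $C_c(G^{(n)}, \Zz)$ of the nerve are spanned by indicators of products of bisections $[s_1, D_{e_1}] \times \cdots$, and the nerve of $S \ltimes E\reg$ has basis given by composable strings of germs at points of $E\reg$.

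The map I would use sends a germ string based at $e \in E\reg$ to the indicator of the corresponding product of bisections over $D_e$. It is well defined on germs because $D_e$ only sees the germ of $s$ at $e$ in $E\reg$. It is a chain map, since the face maps correspond. Equivalently, and more cleanly, I would package this as the étale correspondence $\Omega = \{(s, \chi) \mid \chi \in D_{s^*s}\}/\sim$ between $S \ltimes E\reg$ and $G$. Homology is functorial for proper correspondences, and we then need to show that the induced map is an isomorphism.

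The main obstacle is showing that this chain map is a quasi-isomorphism. The plan is to filter by the basis. Independence gives that $C_c(\widehat E, \Zz)$ is a free abelian group with basis $\{1_{D_e} \mid e \in E\reg\}$. The indicators are linearly independent by a Möbius-type argument using the principal characters $\chi_e$, and they span by inclusion–exclusion together with closure under intersection. Hence $C_c(\widehat E, \Zz) \cong \Zz[E\reg]$ as $S$-modules, with the partial action of $S$ by the induced map.

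Groupoid homology of $S \ltimes X$ can be computed as the Tor (homology) of the inverse semigroup $S$ with coefficients in the module $C_c(X, \Zz)$, via the bar resolution. This works because $C_c(G^{(n)}, \Zz) \cong \Zz[\text{composable } S\text{-strings}] \otimes_{\Zz E} C_c(X, \Zz)$ after quotienting by germ relations. The same formula for $S \ltimes E\reg$ uses the module $\Zz[E\reg]$. The identification $C_c(\widehat E, \Zz) \cong \Zz[E\reg]$ of $S$-modules therefore yields the isomorphism of chain complexes levelwise, which is stronger than a quasi-isomorphism.

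The delicate point is verifying the levelwise identification with germ relations, i.e. that $C_c(G^{(n)}, \Zz)$ is free on the bisection strings over the basic sets. I would handle this by applying the freeness argument for the regular basis to the space $G^{(n)}$ fibred over $\widehat E$.
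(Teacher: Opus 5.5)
Your outer structure matches the paper's. The second isomorphism comes from the Morita equivalence of the discrete groupoid with the disjoint union of its isotropy groups $S_e$, and your germ computation there is right. The first comes from the chain map $\delta_x\mapsto 1_{V_x}$ on Matui's complexes, where $V_x=\{[s_{n-1},\dots,s_0,\chi]\mid \chi\in D_e\}$ for $x=[s_{n-1},\dots,s_0,e]$. The paper also shows this map is an isomorphism in every degree.

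\textbf{The $S$-module / $\mathrm{Tor}$ step is wrong.} The bijection $\delta_e\mapsto 1_{D_e}$ is an isomorphism of abelian groups, but not of $S$-modules for the structures that govern the two groupoids.
\begin{itemize}
\item In $C_c(E\reg,\Zz)$ one has $s\cdot\delta_e=0$ unless $e\le s^*s$.
\item In $C_c(\widehat E,\Zz)$ one has $s\cdot 1_{D_e}=1_{D_{ses^*}}$, which is nonzero as soon as $es^*s\neq 0$.
\end{itemize}
So the map is not even $\Zz E$-linear: for idempotents $e>f>0$, $f\cdot\delta_e=0$ but $f\cdot 1_{D_e}=1_{D_f}$.

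The two modules are in general not isomorphic at all. Take $S=E=\{0\}\cup\{e_1>e_2>\cdots\}$. Every element of $\Zz[E\reg]$ is killed by $e_n$ for $n$ large, while $e_n\cdot 1_{D_{e_1}}=1_{D_{e_n}}\neq 0$ for all $n$. So discretisation cannot follow from functoriality of a bar complex or $\mathrm{Tor}$ in the coefficient module. That the homologies agree even though the modules differ is exactly the content of the theorem.

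Your "equivalent" correspondence $\{(s,\chi)\mid\chi\in D_{s^*s}\}/{\sim}$ also has a problem: it is just $G$ and has no well-defined anchor to $E\reg$. The paper uses $\coprod_{e\in E\reg}\{[s,\chi]\in G\mid s.\chi\in D_e\}$ instead.

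\textbf{What actually works} is your last sentence, carried out directly with no modules; this is the paper's proof.
\begin{itemize}
\item Fix $y=[s_{n-1},\dots,s_0,l]$. The sets $V_x$ with $x\le y$ (same labels, $k\le l$) are the images of the $D_k$, $k\le l$, under the homeomorphism $D_l\cong V_y$. Hence they form a generating independent family in $V_y$, and $\Phi_n$ maps $\Zz[\{x\le y\}]$ isomorphically onto $C_c(V_y,\Zz)$.
\item Since the $V_y$ cover $G^{(n)}$, $\Phi_n$ is surjective.
\item For injectivity, take a relation $\sum a_x 1_{V_x}=0$ and a maximal $y$ in its support. Evaluate at $[s_{n-1},\dots,s_0,\chi_l]$, which lies in $V_x$ if and only if $y\le x$. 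This gives $a_y=0$; then induct.
\end{itemize}
The equivalence in the last step needs the converse of your well-definedness remark. Two label strings with the same germ at the principal character $\chi_l$ have the same germ at $l$ in $S\ltimes E\reg$: if $sf=tf$ for some $f\ge l$, then $sl=tl$. The same holds at each level of the string, since principal characters are mapped to principal characters.
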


We also extend this to the more general setting of locally finite weak semilattices in \S~\ref{ss:Disc_H}, following the same proof strategy. As with discretisation in K-theory, discretisation in homology admits a dynamical reformulation.

\begin{manual}[introtheorem]{E\thmprime}[Discretisation in homology, a dynamical perspective]
Let $G = S \ltimes X$ be an ample groupoid which admits an $S$-invariant regular basis $\mathcal U$ of compact open sets in $X$. Then 
\[ \Hlgy_*(G) \cong \Hlgy_*(S \ltimes \mathcal U) \cong \bigoplus_{[U] \in S \backslash \mathcal U} \Hlgy_*(S_U), \]
where $S_U = (S \ltimes \mathcal U)^U_U$ is the isotropy group at $U \in \mathcal U$.
\end{manual}

The independent resolutions we construct for higher rank graphs and for self-similar groups (see Examples \ref{ex:HRG5}, \ref{ex:HRG6} and \ref{ex:ssg}) also provide an alternative perspective on the groupoid homology computations undertaken in \cite{FKPS} and \cite{Nek,MS} respectively. 

One motivation for studying groupoid homology is that it leads to applications to group homology of topological full groups via \cite{Li25}. Stein's groups \cite{Ste92} are generalizations of Thompson groups and Higman--Thompson groups, where we are allowed to vary the conditions on slopes and breakpoints as well as the length of the underlying interval. Stein's groups were realised as the topological full groups $\bmF(G_{\Pi,A})$ of ample groupoids $G_{\Pi,A}$ by Tanner \cite{Tan} (see also \cite{Tanarxiv}), based on ideas in \cite{Li15}, and groupoid homology and K-theory computations have been carried out in particular cases in \cite{Tan,Li15}. The groupoid perspective was used by Matui to classify Stein's groups in \cite{Mat25}. Interestingly, in this case the groupoids $G_{\Pi,A}$ fit into an independent co-resolution of length $1$, in the sense that they can be described as complements of the form $G_0 \setminus G_{-1}$, where $G_0$ and $G_{-1}$ are independent ample groupoids, and $G_{-1}$ is the restriction of $G_0$ to a closed invariant subset of $G_0^{(0)}$. This, combined with discretisation, leads to concrete spectral sequences computing groupoid homology and K-theory for underlying groupoids of Stein's groups. In particular, we obtain the following concrete, sufficient criteria for Stein's groups to be rationally acyclic. 
The parameters $(\Pi,A)$ for the underlying groupoids $G_{\Pi,A}$ of Stein's groups are a multiplicative subgroup $\Pi$ of $\Rz_{>0}$ and a subgroup $A \subseteq (\Rz,+)$ such that $\Pi$ acts on the monoid $A_+ := A \cap \Rz_{\geq 0}$ by multiplication.
\begin{introcor}[see Corollary~\ref{cor:ConcCompStein}]
If one of the generators of $\Pi$ can be chosen as a number in $\Qz_{>0} \setminus \gekl{1} \subseteq \Rz_{>0} \setminus \gekl{1}$, then $\Hlgy_n(G_{\Pi,A}) \otimes \Qz = 0$ for all $n$, and thus the corresponding Stein's group $\bmF(G_{\Pi,A})$ is rationally acyclic.
\end{introcor}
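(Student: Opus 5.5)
We will use the independent co-resolution of length $1$ described just before the statement: $G_{\Pi,A} \cong G_0 \setminus G_{-1}$, where $G_0$ and $G_{-1}$ are independent ample groupoids and $G_{-1} = G_0|_{C}$ for a closed invariant subset $C \subseteq G_0^{(0)}$. For the restriction to the open invariant complement, groupoid homology has a long exact sequence
\[
\cdots \to \Hlgy_n(G_{\Pi,A}) \to \Hlgy_n(G_0) \to \Hlgy_n(G_{-1}) \to \Hlgy_{n-1}(G_{\Pi,A}) \to \cdots,
\]
which is the degenerate two-column case of the spectral sequence mentioned in the introduction. Tensoring with $\Qz$ keeps it exact. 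So it suffices to show $\Hlgy_*(G_0) \otimes \Qz = 0$ and $\Hlgy_*(G_{-1}) \otimes \Qz = 0$.

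Both groupoids are independent, so discretisation in homology (Theorem E$'$) gives
\[
\Hlgy_*(G_i) \cong \bigoplus_{[U]} \Hlgy_*(S_U)
\]
for $i \in \{0,-1\}$. It therefore suffices to show that every isotropy group $S_U$ appearing in the discretised groupoids is rationally acyclic.

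In the natural model, the basic compact open sets are intervals $[a,b)$ (or their analogues) with endpoints in $A$. The isotropy of such a set consists of the piecewise-linear germs fixing it. For the discretised, independent model this should reduce to the affine maps $x \mapsto \pi x + t$ that preserve the interval data. For $G_{-1}$ it should be the stabiliser of a point. Concretely, the isotropy groups should be of the form $\Pi$, $\Pi \ltimes A$, or subgroups of these, namely the stabilisers of endpoints, which are isomorphic to $\Pi$ acting on $A$ by multiplication. The first concrete step is to read these groups off from the explicit description of $G_0$ and $G_{-1}$ given in the paper's section on Stein's groups.

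The key step, and the one I expect to be the main obstacle, is proving rational acyclicity of these groups using a rational generator $q = r/s \neq 1$ of $\Pi$. For a semidirect product $\Pi \ltimes A$, with $\Pi \cong \Zz^m$ (if $\Pi$ is finitely generated) or a direct limit of such, I would run the Lyndon--Hochschild--Serre spectral sequence with rational coefficients:
\[
E^2_{p,q'} = \Hlgy_p\bigl(\Pi;\ \Hlgy_{q'}(A;\Qz)\bigr) = \Hlgy_p\bigl(\Pi;\ \textstyle\bigwedge^{q'}_{\Qz}(A\otimes\Qz)\bigr).
\]
The element $q$ acts on $\bigwedge^{q'}(A \otimes \Qz)$ by the scalar $q^{q'}$. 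For $q' \geq 1$ this scalar is a rational number different from $1$, so $q^{q'}-1$ is invertible and, by the centre-kills lemma, these rows vanish. If $\Pi$ has a direct factor $\langle q \rangle$ that acts trivially here, then $\Hlgy_*(\Pi;\Qz)$ need not vanish. The remaining row $q'=0$, namely $\Hlgy_*(\Pi;\Qz)$, must therefore cancel against $G_0$ versus $G_{-1}$ instead of vanishing outright.

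So I expect the actual mechanism to be slightly different. Either the isotropy groups in $G_0$ are all of the form $\Pi\ltimes A$ and those in $G_{-1}$ are of the form $\Pi$, and the map $\Hlgy_*(G_0)\to\Hlgy_*(G_{-1})$ is rationally an isomorphism on the $\Hlgy_*(\Pi)$ parts. Or the relevant isotropy groups are genuinely acyclic because $q$ acts nontrivially on the coefficients coming from the index set of orbits. I would first compute the discretised homology exactly, then check that the connecting map matches the $\Hlgy_*(\Pi;\Qz)$ summands bijectively, using that $q\neq 1$ rational kills all higher LHS rows.

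Finally, since $\Hlgy_*(G_{\Pi,A})\otimes\Qz = 0$, rational acyclicity of $\bmF(G_{\Pi,A})$ follows from the results of \cite{Li25} relating the homology of topological full groups to groupoid homology.
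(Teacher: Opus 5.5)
There is a genuine gap, and it sits exactly where you located the ``mechanism''.

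\textbf{Your first reduction cannot apply.} You reduce to showing $\Hlgy_*(G_0)\otimes\Qz=0$ and $\Hlgy_*(G_{-1})\otimes\Qz=0$, but this never holds here. In the paper's co-resolution, $G_0=S\ltimes\widehat{E}$, with $S$ the left inverse hull of $A_+\rtimes\Pi$. The action of $S$ on $E\reg$ has a single orbit with isotropy $\Pi$. So discretisation gives $\Hlgy_*(G_0)\cong\Hlgy_*(\Pi)$, and in particular $\Hlgy_0(G_0)\otimes\Qz\cong\Qz$.

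\textbf{The replacement step is not carried out.} You notice that a cancellation is needed, but you never pin down the two groupoids or the map between them. The scenario you favour has the roles reversed: the discretised $G_0$ contributes $\Hlgy_*(\Pi)$, while $G_{-1}=S\ltimes\{\infty\}$ is the discrete group $A\rtimes\Pi$ itself. The step ``check that the map matches the $\Hlgy_*(\Pi;\Qz)$ summands bijectively'' is the heart of the proof, and it is only stated as an expectation. (The relevant map is the restriction $\Hlgy_*(G_0)\to\Hlgy_*(G_{-1})$, not the connecting map.) As it stands, the argument does not reach the conclusion.

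\textbf{The missing idea is the identification of that map.} Under discretisation, $\Hlgy_*(S\ltimes\widehat E)\to\Hlgy_*(S\ltimes\{\infty\})$ becomes the map $\Hlgy_*(\Pi)\to\Hlgy_*(A\rtimes\Pi)$ induced by the canonical inclusion $\Pi\hookrightarrow A\rtimes\Pi$. The reason is that the point $\infty$ lies in every $\widehat E(e)$, so composing the discretisation correspondence with restriction to $\{\infty\}$ simply includes the isotropy group.

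\textbf{How the proof then closes.}
\begin{enumerate}
\item The projection $A\rtimes\Pi\to\Pi$ splits the inclusion, so the long exact sequence gives $\Hlgy_n(G_{\Pi,A})\cong\Hlgy_{n+1}(A\rtimes\Pi)/\Hlgy_{n+1}(\Pi)$.
\item The Lyndon--Hochschild--Serre spectral sequence, taken modulo its bottom row, then gives $\Hlgy_p(\Pi,\Hlgy_{q+1}(A,\Zz))\Rightarrow\Hlgy_{p+q}(G_{\Pi,A})$.
\item Your own computation finishes the job. An element $\xi\in\Pi\cap\Qz_{>0}$ with $\xi\neq1$ is central and acts on $\Hlgy_{q'}(A;\Qz)\cong\bigwedge^{q'}_{\Qz}(A\otimes\Qz)$ by the scalar $\xi^{\pm q'}\neq 1$ for $q'\geq 1$. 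So all rows with $q'\geq1$ vanish rationally.
\item Equivalently, the projection induces $\Hlgy_*(A\rtimes\Pi;\Qz)\cong\Hlgy_*(\Pi;\Qz)$. Hence the inclusion-induced map is a rational isomorphism, and the long exact sequence forces $\Hlgy_*(G_{\Pi,A})\otimes\Qz=0$.
\end{enumerate}

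Your centre-kills step is more direct than the paper's argument, which proceeds as follows:
\begin{itemize}
\item it reduces to finitely generated, hence free abelian, $\Pi$;
\item it uses the Elementary Divisor Theorem to find $\pi$ with $\xi\in\langle\pi\rangle$ and $\Pi/\langle\pi\rangle$ free;
\item it shows $\id-\pi_*$ is invertible;
\item it runs a second Lyndon--Hochschild--Serre spectral sequence for the normal subgroup $\langle\pi\rangle$ of $\Pi$.
\end{itemize}
Your version needs no finite generation.
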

\setlength{\parindent}{0cm} \setlength{\parskip}{0cm}

We also obtain sufficient criteria for Stein's groups to be integrally acyclic.
\setlength{\parindent}{0cm} \setlength{\parskip}{0.5cm}

Let us now present the main ideas behind the proofs of discretisation and independent resolutions.

The proof strategy for Theorem~\ref{t:introKtheory} is similar to the one developed in \cite{Mil3}, which is based on the ABC spectral sequence and its functoriality with respect to groupoid correspondences. In \cite{Mil3}, a spectral sequence argument then reduces Theorem~\ref{t:introKtheory} to discretisation in groupoid homology (this reduction uses the torsion-free isotropy assumption), which is then established by analysing chain complexes. In general, i.e., without the torsion-free isotropy assumption, we have to work with spectral sequences involving more complicated invariants, and the challenge we overcome in this paper is to establish the desired isomorphisms at the level of these more sophisticated analogues of groupoid homology.

In the context of Theorem \ref{t:introresolutions}, the main novelty of our paper is the development of a systematic way of producing systems of finite covers which lead to independent resolutions recursively following a concrete algorithm. At the technical level, the core exactness result is Theorem~\ref{thm:IndRes}, which makes sure that our algorithm does indeed produce an independent resolution, and which is the key improvement compared to previous results in \cite{LNI, LNII}, where such an exactness result was only established under strong assumptions. We have also developed a systematic approach to finding the data needed to construct an independent resolution, in the language of equivariant representations of semilattices and more generally locally finite weak semilattices. In particular, the notion of a thorough system of finite covers (Definition \ref{d:thorough}) provides a practical way to check that a system of finite covers produces a resolution of the desired groupoid (see Lemma \ref{l:thorough}).

The present paper is structured as follows: In Section~\ref{s:LFWS}, we recall the notion of locally finite weak semilattices from \cite[\S~8]{Mil}, study an equivariant version and develop a new, alternative perspective in terms of topological dynamical systems. Independent resolutions are discussed in Section~\ref{s:IndRep}, where we also develop our general construction based on systems of finite covers. Again, we work out an interpretation from the point of view of topological dynamics. Throughout Sections~\ref{s:LFWS} and \ref{s:IndRep}, we use the concrete example of higher rank graphs and their groupoids to illustrate our ideas. Section~\ref{s:Disc} treats the crucial idea of discretisation. First, we establish discretisation in groupoid homology in Section~\ref{ss:Disc_H}. Secondly, we formulate discretisation in K-theory in Section~\ref{ss:Disc_K}. Since the proof of K-theoretic discretisation is long and technical, it occupies a separate section (Section~\ref{proof section}). Finally, applications and examples are presented in Section~\ref{s:AppEx}. In Appendix~\ref{correspondence functor}, we explain how groupoid correspondences lead to $\cs$-algebraic correspondences in an equivariant setting. This is needed in Section~\ref{proof section}.

\section{Inverse semigroup actions on locally finite weak semilattices}
\label{s:LFWS}

In this section, we recall the notion of locally finite weak semilattices (following \cite[\S~8]{Mil}) and explain how to construct $\cs$-algebras from these structures. We then develop an alternative, dual picture, which emphasizes the topological perspective. Moreover, we consider the equivariant setting by adding an inverse semigroup action.

First we fix notation and conventions for inverse semigroups and \'etale groupoids. An \'etale groupoid $G$ is a topological groupoid whose range and source maps are local homeomorphisms; in this article the unit space $G^0$ is assumed to be locally compact and Hausdorff, but $G$ may be non-Hausdorff. Inverse semigroups and semilattices are assumed to have a $0$ element, and we denote the semilattice of idempotents in an inverse semigroup $S$ by $E(S)$. There is a natural partial order $\leq$ on $S$ given by $s \leq t$ if $s \in tE(S)$, or equivalently $s \in E(S)t$. Inverse semigroups $S$ act on sets $X$ by partial bijections; each $s \in S$ acts as a bijection written $x \mapsto s . x$ from its domain $\dom_X(s) \subseteq X$ to its range $\ran_x(s) \subseteq X$, and $\dom_X(0) = \ran_X(0) = \emptyset$. An action of $S$ on a locally compact Hausdorff space $X$ is by partial homeomorphisms; in this case $\dom_X(s)$ and $\ran_X(s)$ are assumed open and $s \colon \dom_X(s) \to \ran_X(s)$ a homeomorphism. The transformation groupoid $S \ltimes X$ is an \'etale groupoid with unit space $X$ given by
\[
 S \ltimes X \coloneq \menge{(s,x) \in S \times X}{x \in \dom_{X}(s)} / {\sim},
\]
where $(s_1,x_1) \sim (s_2,x_2)$ if and only if $x_1 = x_2$ and there exists $s \leq s_1, s_2$ with $x_1 = x_2 \in \dom_{X}(s)$. We denote the class of $(s,x)$ by $[s,x]$, which has range $s . x$ and source $x$. Multiplication in $S \ltimes X$, if defined, is given by $[t,s.x][s,x] = [ts,x]$.

\subsection{Locally finite weak semilattices}
\label{ss:LFWS}

\bdefin
A \emph{weak semilattice} $L$ is a poset with a zero element $0$ (a necessarily unique element with the property that $0 \leq l$ for all $l \in L$) such that for all $l_1, l_2 \in L$ there exists a finite subset $F \subseteq \menge{l \in L}{l \leq l_1, \, l \leq l_2}$ such that for all $l \in L$ with $l \leq l_1$, $l \leq l_2$, there exists $f \in F$ with $l \leq f$. Define $l_1 \down l_2$ as the set of maximal elements in $\menge{l \in L}{l \leq l_1, \, l \leq l_2}$. 
\setlength{\parindent}{0.5cm} \setlength{\parskip}{0cm}

A subset $\bar{F}$ of a weak semilattice is called $\down$-closed if for all $x, y \in \bar{F}$, $x \down y \subseteq \bar{F}$. We call a weak semilattice $L$ \emph{locally finite} if for any finite subset $F \subseteq L$ there exists a finite subset $\bar{F} \subseteq L$ which is $\down$-closed such that $F \subseteq \bar{F}$.
\edefin
\setlength{\parindent}{0cm} \setlength{\parskip}{0cm}

Note that in a weak semilattice $L$, $l_1 \down l_2$ is always a finite set for all $l_1, l_2 \in L$. 
\setlength{\parindent}{0cm} \setlength{\parskip}{0.5cm}

In the following, we will use the following notation: Given a poset $L$ with zero element $0$, and a subset $F \subseteq L$, we write $F\reg \coloneq F \setminus \gekl{0}$. A subset of $L$ is called a down-set if whenever $l$ is in the subset and $k \in L$ satisfies $k \leq l$, then $k$ has to be in the subset. Moreover, for $x$ and $y$ in $L$, we write $x \perp y$ if the only element below $x$ and $y$ is $0$. In that case, we say that $x$ and $y$ are disjoint or orthogonal. This happens in a weak semilattice if and only if $x \down y = \{ 0 \}$. Given a finite subset $F$ of a weak semilattice $L$, we write $\bigdown_{f \in F} \, f$ for the set of maximal elements in $\menge{k \in L}{k \leq f \text{ for all } f \in F}$.

\bremark
A semilattice $E$ is always a locally finite weak semilattice because for any $e,f \in E$ we have $e \down f = \gekl{ef}$, and for any finite subset $F \subseteq E$ the set $\bar{F}$ of finite products of elements in $F$ is $\down$-closed.
\eremark

In \S~\ref{s:LFWS} and \S~\ref{s:IndRep}, we will illustrate our concepts and results using higher rank graphs as examples, which we will develop step by step. 
\bex
\label{ex:HRG1}
Let $\Zz_{\geq 0} = \gekl{0, 1, 2, \dotsc}$ be the additive monoid of non-negative integers, and let $k$ be an integer with $k \geq 1$. A \emph{higher rank graph} $\Lambda$ of rank $k$ (or \emph{$k$-graph}) is a small category with a $\Zz_{\geq 0}^k$-valued degree map, i.e., a functor $d \colon \Lambda \to \Zz_{\geq 0}^k$ such that the following \emph{unique factorisation property} holds: For all $\nu \in \Lambda$ with $d(\nu) = pq$, there exist unique elements $\lambda, \mu \in \Lambda$ with $\nu = \lambda \mu$, $d(\lambda) = p$ and $d(\mu) = q$.
\setlength{\parindent}{0.5cm} \setlength{\parskip}{0cm}

$\cs$-algebras attached to higher rank graphs have been introduced in \cite{KuPa} (see also \cite{RSY03,RSY04}).

Set $L_{\Lambda} \coloneq \Lambda \cup \{ 0 \}$, ordered by $\lambda \leq \mu$ if $\lambda \Lambda \subseteq \mu \Lambda$, i.e. there is $\lambda' \in \Lambda$ with $\lambda \lambda' = \mu$.\footnote{This has the slightly awkward consequence that the degree map $d$ reverses inequalities, i.e. $\lambda \leq \mu$ implies $d(\lambda) \geq d(\mu)$.} The higher rank graph $\Lambda$ is called finitely aligned if $L_{\Lambda}$ is a weak semilattice. In this case it is automatically locally finite; let us explain why. For $\lambda, \mu \in \Lambda$, the set $\lambda \down \mu $ consists of minimal common extensions of $\lambda$ and $\mu$, which are all orthogonal. That is, for distinct $\nu, \xi \in \lambda \down \mu $, we have $\nu \perp \xi$. Minimal common extensions $\nu$ and $\xi$ of $\lambda$ and $\mu$ must satisfy $d(\nu) = d(\xi) = d(\lambda) \vee d(\mu)$ (where $\vee$ denotes the least common upper bound in the monoid $\Zz_{\geq 0}^k$), and if $\nu$ and $\xi$ have a common extension then $\nu = \xi$ by the unique factorisation property. Now, given a finite set $F \subseteq \Lambda$, consider the finite set 
\[\bar{F} = \bigcup_{A \subseteq F}  \bigdown_{f \in A} \, f. \]  
This contains $F$ and is $\down$-closed because, given $\lambda \in \bigdown_{f \in A} \, f$ and $\lambda' \in \bigdown_{f' \in A'} \, f'$, we have $\lambda \down \lambda' \subseteq \bigdown_{g \in A \cup A'} \, g$.

Given a $k$-graph $\Lambda$ and an element $\lambda \in \Lambda$, let $\mfd(\lambda)$ denote the domain of $\lambda$ and $\mft(\lambda)$ the codomain of $\lambda$. Given an object $v$ in $\Lambda$, note that $v \Lambda = \mft^{-1}(v)$. Moreover, let $e_i \in \Zz_{\geq 0}^k$ be the $i$-th generator (for $1 \leq i \leq k$) and define 
\[ \Lambda^{(i)} := d^{-1}(e_i). \]
The $k$-graph $\Lambda$ is \emph{row-finite} if for every object $v$ in $\Lambda$ and $1 \leq i \leq k$, we have $\# \, v \Lambda^{(i)} < \infty$. In this case, $\Lambda$ is finitely aligned. It is \emph{source-free} if $0 < \# \, v \Lambda^{(i)}$ for each $i$. 
\eex
\setlength{\parindent}{0cm} \setlength{\parskip}{0.5cm}

Next, let us define the $\cs$-algebra of a locally finite weak semilattice. We need the following notation. Let $D$ be a commutative algebra and $F$ a finite set of idempotents in $D$. Define
\[
 \bigvee_{p \in F} p \coloneq \sum_{\emptyset \neq F' \subseteq F} (-1)^{\# \, F' - 1} \prod_{p \in F'} p.
\]
In other words, $\bigvee_{p \in F} p$ is the smallest idempotent $q$ in $D$ such that $p \leq q$ for all $p \in F$.

\bdefin
Let $L$ be a locally finite weak semilattice. The $\cs$-algebra $\cs(L)$ of $L$ is the universal $\cs$-algebra which is commutative and generated by projections $\menge{p_l}{l \in L}$ subject to the relations $p_0 = 0$ and 
\[
 p_k p_l = \bigvee_{m \in k \down l} p_m \quad \quad \quad \text{ for all } k, l \in L.
\]
\edefin

To see that $\cs(L)$ is non-trivial, let us construct a concrete representation. Consider the Hilbert space $\ell^2(L\reg)$ with canonical orthonormal basis $\menge{\delta_x}{x \in L\reg}$. Given $l \in L$, let $P_l$ be the orthogonal projection onto the closed subspace $\clspan(\menge{\delta_k}{k \leq l})$. Let $\csr(L) \coloneq \cs(\menge{P_l}{l \in L}) \subseteq \cL(\ell^2(L\reg))$. Clearly, $P_0 = 0$ and $P_k P_l = \bigvee_{m \in k \down l} P_m$ for all $k, l \in L$. Hence, by universal property of $\cs(L)$, there exists a homomorphism $\pi_r \colon \cs(L) \to \csr(L)$ sending $p_l$ to $P_l$ for all $l \in L$.

Let us now study $\cs(L)$ in more detail.
\blemma
\label{lem:ZF}
Let $L$ be a locally finite weak semilattice and $\bar{F}$ a $\down$-closed finite subset of $L$. For every subset $F \subseteq \bar{F}$, we have $\bigvee_{k \in F} \, p_k \in \Zz \text{-} \lspan(\menge{p_l}{l \in \bar{F}})$.
\elemma
\setlength{\parindent}{0cm} \setlength{\parskip}{0cm}

\bproof
Given $l \in \bar{F}$, define $\ell(l) \coloneq \#( \menge{k \in \bar{F}\reg}{k \leq l} )$, and set $\ell(F) \coloneq \max \menge{\ell(l)}{l \in F}$. We proceed by induction on $\min(\# \, F, \ell(F))$. If $\ell(F) = 1$, then all $p_k$ (for $k \in F$) are pairwise disjoint, so that $\bigvee_{k \in F} \, p_k = \sum_{k \in F} p_k$. If $\# \, F = 1$, then there is nothing to prove. For the induction step, write $F = \gekl{k} \cup F'$. Then $q \coloneq \bigvee_{k' \in F'} p_{k'}$ lies in $\Zz \text{-} \lspan(\menge{p_l}{l \in \bar{F}})$. Moreover,
\[
 \bigvee_{l \in F} p_l = p_k \vee q = p_k + q - p_k q = p_k + q - \bigvee_{k' \in F'} p_k p_{k'} = p_k + q - \bigvee_{k' \in F'} \bigvee_{j \in k \down k'} \, p_j.
\]
As every $j \in \bigcup_{k' \in F'} k \down k'$ satisfies $j \leq k'$ for some $k' \in F'$, we have $\ell(\bigcup_{k' \in F'} k \down k') \leq \ell(F')$. It hence follows that $\bigvee_{k' \in F'} \bigvee_{j \in k \down k'} \, p_j \in \Zz \text{-} \lspan(\menge{p_l}{l \in \bar{F}})$ by induction hypothesis.
\eproof
\setlength{\parindent}{0cm} \setlength{\parskip}{0.5cm}

\blemma
\label{lem:C*L}
Let $L$ be a locally finite weak semilattice.
\setlength{\parindent}{0cm} \setlength{\parskip}{0cm}

\begin{enumerate}[label=(\roman*)]
    \item If $F \subseteq L$ is a finite $\down$-closed subset, then $\cs(\menge{p_f}{f \in F}) = \lspan((\menge{p_f}{f \in F})$ is finite dimensional.
    \item We have $\cs(L) \cong \ilim_F \cs(\menge{p_f}{f \in F})$, where the inductive limit is taken over all finite $\down$-closed subsets $F$ of $L$, ordered by inclusion.
    \item $\pi_r \colon \cs(L) \to \csr(L)$ is an isomorphism.
    \item The set $\menge{p_l}{l \in L\reg}$ is linearly independent (in $\cs(L)$).
    \item Let $D$ be a commutative $\cs$-algebra generated by projections $\menge{q_l}{l \in L}$ such that $q_0 = 0$ and $q_k q_l = \bigvee_{m \in k \down l} q_m$ for all $k, l \in L$. Then the canonical homomorphism $\pi \colon \cs(L) \to D$ sending $p_l$ to $q_l$ is an isomorphism if and only if the set $\menge{q_l}{l \in L\reg}$ is linearly independent (in $D$).
\end{enumerate}
\elemma
\setlength{\parindent}{0cm} \setlength{\parskip}{0cm}

\bproof
(i) follows from Lemma~\ref{lem:ZF}. (ii) is clear. For (iii), take a finite $\down$-closed subset $F \subseteq L$. For every $l \in F\reg$, consider the projection
\[
 P'_l \coloneq P_l - \bigvee_{k \in F, \, k \lneq l} P_k \in \cs(\menge{P_f}{f \in F\reg}).
\]
The set $\menge{P'_l}{l \in F\reg}$ is linearly independent because $P'_l(\delta_m) = 1$ if $l=m$ and $P'_l(\delta_m) = 0$ if $l \neq m$. Hence we have
\begin{align*}
 \# \, F\reg &\leq \dim( \cs(\menge{P_f}{f \in F\reg}) ) \\
 &\leq \dim( \cs(\menge{p_f}{f \in F\reg}) ) = \dim( \lspan( \menge{P_f}{f \in F\reg}) ) \leq \# \, F\reg.
\end{align*}
It follows that the restriction of $\pi_r$ to $\cs(\menge{p_f}{f \in F\reg})$ is isometric, and thus $\pi_r$ itself is isometric by (ii). This also shows that $\menge{P_l}{l \in L\reg}$ is linearly independent (in $\csr(L)$), which implies (iv) because of (iii). Finally, (v) follows by a similar argument: If $\menge{q_l}{l \in L\reg}$ is linearly independent (in $D$), then $\dim( \cs(\menge{q_f}{f \in F\reg}) = \# \, F\reg$ for every finite $\down$-closed subset $F$ of $L$, so that the restriction of $\pi$ to $\cs(\menge{p_f}{f \in F\reg})$ is isometric. Again (ii) implies that $\pi$ itself is isometric.
\eproof
\setlength{\parindent}{0cm} \setlength{\parskip}{0.5cm}

In the following, we develop a topological perspective. 

\bdefin
Let $L$ be a locally finite weak semilattice. A \emph{character} on $L$ is a map $\chi: \: L \to \gekl{0,1}$ with the property that $\chi(0) = 0$, $\chi$ is non-zero (i.e., there exists $l \in L$ such that $\chi(l) \neq 0$), and, for all $l_1, l_2 \in L$, we have that $\chi(l_1) = 1$ and $\chi(l_2) = 1$ holds if and only if there exists $l \in l_1 \down l_2$ with $\chi(l) = 1$. Let $\widehat{L}$ be the set of all characters on $L$. We equip $\widehat{L}$ with the topology of pointwise convergence. We introduce the notation $\widehat{L}(l) \coloneq \menge{\chi \in \widehat{L}}{\chi(l) = 1}$ for $l \in L\reg$. For each non-zero element $l \in L\reg$, the \emph{principal character at $l$} is the character $\chi_l$ defined by $\chi_l^{-1}(1) = \{ k \in L \mid k \geq l \}$.
\edefin

The following are immediate consequences of the definitions and the universal property of $\cs(L)$.
\blemma
We have a homeomorphism $\widehat{L} \isom \Spec(\cs(L))$ sending a character $\chi \in \widehat{L}$ to the element $\cs(L) \to \Cz, \, p_l \ma \chi(l)$ in $\Spec(\cs(L))$. The inverse sends the element $\omega \colon \cs(L) \to \Cz$ in $\Spec(\cs(L))$ to the character $\chi \colon L \to \gekl{0,1}, \, l \ma \omega(p_l)$.
\elemma
\bcor
\label{cor:C*L=C0whL}
We have an isomorphism $\cs(L) \cong C_0(\widehat{L})$ sending $p_l$ to the characteristic function $1_{\widehat{L}(l)}$.
\ecor

As a next step, let us explain how our setup can be viewed as a generalisation of the setup considered in \cite[\S~2]{CEL2}, with locally finite weak semilattices replacing semilattices. Consider $\cV \coloneq \{ \widehat{L}(l) \mid l \in L\reg \} \cup \gekl{\emptyset}$. Then $\cV$ is a set of compact open subsets of $\widehat{L}$ which is a poset with respect to $\subseteq$, and the map $L \to \cV$ sending $l \in L\reg$ to $\widehat{L}(l)$ and $0$ to $\emptyset$ is a poset isomorphism. In particular, it follows that $\cV$ is a locally finite weak semilattice. However, $\cV$ has the special property that $V \cap W = \bigcup_{U \in V \down W} U$ for all $V, W \in \cV$. Moreover, $\cV$ has two additional properties; it generates the set of all compact open subsets of $\widehat L$ under finite unions and relative complements, and it satisfies the independence condition: given $V, V_1, \dotsc, V_i \in \cV\reg$ with $V \subseteq \bigcup_{h=1}^i V_h$, then there exists $1 \leq h \leq i$ such that $V \subseteq V_h$. 

This motivates the following definition.

\bdefin\label{def:GenIndLFWS}
A \emph{representation} $U$ of a weak semilattice $L$ on a totally disconnected locally compact Hausdorff space $X$ is an order-preserving function $U \colon L \to \cO_c(X)$ with $U(0) = \emptyset$ and $U(l_1) \cap U(l_2) = \bigcup_{l \in l_1 \down l_2} U(l)$ for any $l_1,l_2 \in L$. The representation $U$ is \emph{generating} if 
\setlength{\parindent}{0cm} \setlength{\parskip}{0cm}

\begin{enumerate}
\item[(GEN)]\label{generating} $\{U(l) \mid l \in L\}$ generates $\cO_c(X)$ under finite unions and relative complements $V \setminus W$ for $V \subseteq W$,
\end{enumerate}
and \emph{independent} if 
\begin{enumerate}
\item[(IND)]\label{independent} whenever $l,l_1,\dots,l_n \in L$ satisfy $U(l) \subseteq \bigcup_{i=1}^n U(l_i)$, there is $1 \leq i \leq n$ such that $l = l_i$.
\end{enumerate}
Independence implies that the representation is faithful in the sense that $U$ is injective. A faithful representation given by a subset $L \subseteq \cO_c(X)$ will also be called a weak semilattice \emph{in} $X$.
\edefin

\blemma
\label{lem:X=whV}
Let $U \colon L \to \cO_c(X)$ be a generating representation of a locally finite weak semilattice $L$ on $X$. Then there is a homeomorphism 
\begin{align*}
 X \to & \widehat L_U :=  \menge{\chi \in \widehat{L}}{\substack{\text{for all } f, l_1, \dotsc, l_n \in L \text{ with } U(f) \subseteq \bigcup_i U(l_i),  \\ \chi(f) = 1 \text{ implies } \chi(l_i) = 1 \text{ for some } i}}, \\
 x \ma & \chi_x,
\end{align*}
from $X$ to the closed subset $\widehat L_U \subseteq \widehat L$, sending $x \in X$ to the character $\chi_x$ determined by $\chi_x(l) = 1$ if and only if $x \in U(l)$. The inverse map sends a character $\chi \in \widehat{L}_U$ to the unique point $x \in X$ such that 
\[
 \gekl{x} = \Big( \bigcap_{l \in L, \, \chi(l) = 1} U(l) \Big) \setminus \Big( \bigcup_{k \in L, \, \chi(k) = 0} U(k) \Big).
\]
This homeomorphism sends $U(l)$ to $\menge{\chi \in \widehat{L}_U}{\chi(l) = 1}$.
If $U$ is moreover independent, then $\widehat L_U = \widehat L$ and this determines a homeomorphism $X \cong \widehat L$.
\elemma
\setlength{\parindent}{0cm} \setlength{\parskip}{0cm}

\bproof
We first note that if $U$ is independent it is clear that $\widehat L_U = \widehat L$.
\setlength{\parindent}{0cm} \setlength{\parskip}{0.5cm}

We must check that given $\chi \in \widehat{L}_U$, the set $\Big( \bigcap_{l \in L, \, \chi(l) = 1} U(l) \Big) \setminus \Big( \bigcup_{k \in L, \, \chi(k) = 0} U(k) \Big)$ is a singleton. 

This is the intersection of the compact sets $U(l) \setminus U(k)$ for $k,l \in L$ with $\chi(k) = 0$ and $\chi(l) = 1$. We will argue this is nonempty by the finite intersection property, so consider $(k_1,l_1),\dots,(k_n,l_n)$ as above. Then there exists $l \in \bigdown_{i=1}^n l_i$ with $\chi(l) = 1$, and so $\bigcap_{i=1}^n U(l_i) \setminus \bigcup_{i=1}^n U(k_i) \supseteq U(l) \setminus \bigcup_{i=1}^n U(k_i)$ is nonempty because $\chi \in \widehat L_U$. 

Suppose now that it contains distinct elements $x \neq y$. Because $U$ is generating, there is $l \in L$ such that $U(l)$ either contains $x$ but not $y$ or $y$ but not $x$. Without loss of generality we have $x \in U(l)$, which implies that $\chi(l) = 1$. But this in turn implies $y \in U(l)$, a contradiction.

It is clear that this determines an inverse to the continuous map $x \mapsto \chi_x$. 
If $x \in U(l)$, then $\chi_x(l) = 1$ and conversely if $\chi = \chi_x$ satisfies $\chi(l) = 1$, then $x \in U(l)$. Thus the compact open set $U(l)$ is sent to the compact open set $\{\chi \in \widehat L_U \mid \chi(l) = 1\}$. Because $U$ is generating and $X$ is totally disconnected, we conclude that $x \mapsto \chi_x$ is open and thus a homeomorphism.
\eproof
\setlength{\parindent}{0cm} \setlength{\parskip}{0.5cm}

\bremark
When $L$ is a semilattice, an independent generating representation of $L$ on $X$ is precisely the data of a regular basis in the sense of \cite[\S~2]{CEL2}. A general locally finite weak semilattice $L$ with an independent generating representation $U$ on $X$ is a semilattice if and only if $\{ U(l) \mid l \in L\reg\}$ is a regular basis. Moreover, \cite[Proposition~2.12]{CEL2} tells us that if $X$ is a totally disconnected, locally compact Hausdorff space which is second countable, then there always exists a regular basis, and hence an independent generating representation of a semilattice on $X$.
\eremark

Later on, we will need the following definition.
\bdefin
Given a locally finite weak semilattice $L$, let $\Zz L$ be the free abelian group generated by $L\reg$, i.e., $\Zz L = \bigoplus_{l \in L\reg} \Zz$. We write elements of $\Zz L$ as finite sums of the form $\sum_{l \in L\reg} a_l \, l$, for $a_l \in \Zz$.
\edefin

The following is an immediate consequence of Lemma~\ref{lem:C*L}.
\bcor
\label{cor:ZL=span}
We have an isomorphism of abelian groups
\[
 \Zz L \isom \Zz \text{-} \lspan(\menge{p_l}{l \in L}), \, l \ma p_l.
\]
\ecor

Moreover, Lemma~\ref{lem:ZF} implies that $\Zz \text{-} \lspan(\menge{p_l}{l \in L})$ is a $\Zz$-algebra, where the multiplication is induced by the one in $\cs(L)$. Therefore, this enables us to equip $\Zz L$ with a $\Zz$-algebra structure such that the isomorphism provided by Corollary~\ref{cor:ZL=span} becomes an isomorphism of $\Zz$-algebras. In particular, we can introduce the following.
\bdefin
\label{def:join_ZL}
Given a finite subset $F \subseteq L$, define $\bigvee_{f \in F} \, f \in \Zz L$ as the element of $\Zz L$ that is mapped to $\bigvee_{f \in F} \, p_f \in \Zz \text{-} \lspan(\menge{p_l}{l \in L})$ under the isomorphism from Corollary~\ref{cor:ZL=span}.
\edefin

Our topological perspective gives another picture for $\Zz L$. In the following, $C_c(\widehat L, \Zz)$ denotes the $\Zz$-algebra of compactly supported, continuous functions $\widehat{L} \to \Zz$. Since $\Zz$ is viewed as a discrete space, a function $\widehat{L} \to \Zz$ is continuous if and only if it is locally constant.
\blemma\label{lemma on identifying Z algebras}
There is an isomorphism of $\Zz$-algebras
\[ 
 \Zz L \isom C_c(\widehat L, \Zz), l \ma 1_{\widehat{L}(l)}.
\]
\elemma
\setlength{\parindent}{0cm} \setlength{\parskip}{0cm}

\bproof
Our map is given by the composition
\[
 \Zz L \isom \Zz \text{-} \lspan(\menge{p_l}{l \in L}) \into \cs(L) \isom C_0(\widehat{L}),
\]
where the first arrow is provided by Corollary~\ref{cor:ZL=span}, the second arrow is the canonical inclusion, and the third arrow is the isomorphism from Corollary~\ref{cor:C*L=C0whL}. It hence suffices to show that the image of our map is all of $C_c(\widehat L, \Zz)$. This follows from the observation that $C_c(\widehat L, \Zz)$ is generated as an abelian group by characteristic functions $1_V$ of basic compact open sets of the form $V = \widehat{L}(l) \setminus (\bigcup_{k \in F} \widehat{L}(k))$, where $F$ is a finite subset of $\menge{k \in L}{k \leq l}$, and such elements $1_V$ are in the image of our map $\Zz L \to C_c(\widehat L, \Zz)$.
\eproof

\subsection{The equivariant setting}\label{ss:eqLFWS}

\bdefin\label{d:S acts on L}
Let $L$ be a poset with $0$. A partial order automorphism of $L$ is an order isomorphism between down-sets of $L$.
An action of an inverse semigroup $S$ on a poset $L$ with $0$ is an action on the set $L$ by partial order automorphisms.
\edefin

An action $S \acts L$ on a locally finite weak semilattice $L$ induces an action $S \acts \widehat{L}$. The dual action is given by the partial homeomorphisms $\dom_{\widehat L}(s) \to \ran_{\widehat L}(s), \, \chi \ma s.\chi$, where 
\[
 \dom_{\widehat L}(s) \coloneq \menge{\chi \in \widehat{L}}{\chi(l) = 1 \text{ for some } l \in \dom_L(s)},
\]
$\ran_{\widehat L}(s)$ is defined analogously, and $s.\chi(l) = 1$ if and only if there exists $k \in \ran_L(s)$ with $k \leq l$ and $\chi(s^{-1}(k)) = 1$.

\bremark
The most canonical action of an inverse semigroup $S$ on a locally finite weak semilattice is the action $S \acts E$ on the semilattice $E$ of idempotents in $S$. The action is given at $s \in S$ by $\dom_E(s) = \menge{e \in E}{e \leq s^{-1}s}$ and $s.e \coloneq s e s^{-1}$. In this case, $S \ltimes \widehat{E}$ is Paterson's universal groupoid of $S$ (see \cite[\S~4.3]{Pat}, but with a small difference arising from the treatment of $0 \in S$ as pointed out in \cite[\S~5.5.4]{CELY}).
\eremark

Now let us develop a topological perspective in the equivariant setting, following \cite[Definition~2.12]{Li22}. 
\bdefin
\label{def:InvGILFWS}
Let $S$ be an inverse semigroup, and let $S$ act on a totally disconnected, locally compact Hausdorff space $X$ and a locally finite weak semilattice $L$. A representation $U \colon L \to \cO_c(X)$ is \emph{$S$-equivariant} if 
\setlength{\parindent}{0cm} \setlength{\parskip}{0cm}

\begin{itemize}
    \item for each $s \in S$ we have $\dom_X (s) = \bigcup_{l \in \dom_L (s)} U(l)$, and
    \item for each $s \in S$ and $l \in \dom_L (s)$, we have $s.(U(l)) = U(s.l)$.
\end{itemize}
\edefin
\setlength{\parindent}{0cm} \setlength{\parskip}{0cm}

Note that, if $U$ is $S$-equivariant, then the following are automatically satisfied:
\begin{itemize}
    \item $\ran_X (s) = \bigcup_{l \in \ran_L (s)} U(l)$ for every $s \in S$,
    \item $U|_{\dom_L (s)} \colon \dom_L (s) \to \cO_c(\dom_X (s))$ is a representation of the locally finite weak semilattice $\dom_L (s)$ on $\dom_X (s)$, and similarly for $U|_{\ran_L (s)}$.
\end{itemize}
\setlength{\parindent}{0cm} \setlength{\parskip}{0.5cm}

The following is now a straightforward consequence.
\blemma\label{lem:eqX=whV}
Let $S$ be an inverse semigroup, let $S$ act on a totally disconnected, locally compact Hausdorff space $X$ and a locally finite weak semilattice $L$, and suppose that $U$ is an $S$-equivariant generating representation of $L$ on $X$. Then we have an isomorphism of $S$-dynamical systems $(S \acts X) \cong (S \acts \widehat{L}_U)$. More precisely, the homeomorphism $X \cong \widehat{L}_U$ from Lemma~\ref{lem:X=whV} is $S$-equivariant. If $U$ is moreover independent, this yields an isomorphism $(S \acts X) \cong (S \acts \widehat{L})$.
\elemma

\bdefin\label{d:independent}
Let $S \acts X$ be an action of an inverse semigroup $S$ on a totally disconnected locally compact Hausdorff space $X$. We say $S \acts X$ is \emph{independent} if there is an $S$-equivariant independent generating representation of a locally finite weak semilattice $L$ on $X$. An ample groupoid $G$ is called \emph{independent} if there is an independent action $S \acts X$ with $G \cong S \ltimes X$.
\edefin

\bex\label{ex:HRG2}
Let $\Lambda$ be a higher rank graph as in Example~\ref{ex:HRG1}. The \emph{graph inverse semigroup} $S_\Lambda$ is the left inverse hull of $\Lambda$, i.e., the smallest inverse semigroup of partial bijections of $\Lambda$ generated by the partial bijections $\mfd(\lambda) \Lambda \isom \lambda \Lambda, \, \mu \ma \lambda \mu$ (for $\lambda \in \Lambda$). The action of $S_{\Lambda}$ on $\Lambda$ extends to an action on the poset $L_{\Lambda} = \Lambda \cup \{0\}$ by setting $s. 0 = 0$ for each $s \in S_\Lambda$.

A \emph{path} of (possibly infinite) shape $m \in (\mathbb Z_{\geq 0} \cup \{\infty\})^k$ in $\Lambda$ is a function $z \colon \{ p \in \mathbb Z_{\geq 0}^k \mid p \leq m \} \to \Lambda$ such that $z(0) \in \Lambda^0$ and $z(q) \in z(p) \Lambda$ whenever $p \leq q$. We write $d(z) := m$. An element $\lambda$ of $\Lambda$ is considered as a finite path of shape $d(\lambda)$ via the function $z_\lambda$ which sends $p \leq d(\lambda)$ to the unique $\mu \in \Lambda$ with $d(\mu) = p$ and $\lambda \in \mu \Lambda$. We say a path $z \in X_\Lambda$ \emph{extends} $\lambda \in \Lambda$ if $z(d(\lambda)) = \lambda$. Let $X_\Lambda$ be the set of paths (of any shape) in $\Lambda$; this is straightforward to identify with the space $X_\Lambda$ constructed in \cite[\S~5]{FMY}. Moreover, under the topology considered in \cite{FMY}, $\cO_c(X_\Lambda)$ is generated by the compact open sets $V(\lambda) := \{ z \in X_\Lambda \mid z(d(\lambda)) = \lambda \}$ for $\lambda \in \Lambda$ under finite intersections, finite unions and relative complements. The inverse semigroup $S_\Lambda$ acts on $X_\Lambda$ with domains $\dom_{X_\Lambda}(s) := \{ z \in X_\Lambda \mid \text{there is } \lambda \in \dom_{\Lambda} (s) \text{ with } z(d(\lambda)) = \lambda \}$ as follows. For $z \in \dom_{X_\Lambda}(s)$, the path $s . z \in X_\Lambda$ has shape $d(s.z) := \sup \{ d(s.\lambda) \in \mathbb Z_{\geq 0}^k \mid \lambda \in \dom_\Lambda (s), \, z(d(\lambda)) = \lambda \}$ and for $p \in \mathbb Z_{\geq 0}^k$ with $p \leq d(s.z)$, the element $(s.z)(p)$ is the unique $\mu \in \Lambda$ with $d(\mu) = p$ such that there exists $\lambda \in \dom_\Lambda(s)$ with $z(d(\lambda)) = \lambda$ and $s . \lambda \in \mu \Lambda$. The groupoid $S_\Lambda \ltimes X_\Lambda$ agrees with the groupoid $G_\Lambda$ from \cite{FMY} (although our $S_{\Lambda}$ is an inverse subsemigroup of what is denoted by $S_{\Lambda}$ in \cite[\S~4]{FMY}).

Suppose that $\Lambda$ is finitely aligned so that $L_\Lambda$ is a locally finite weak semilattice. Then $V \colon L_\Lambda \to \cO_c(X_\Lambda)$ determined by $V(\lambda)$ for $\lambda \in \Lambda$ is an $S_\Lambda$-equivariant generating representation of $L_\Lambda$. By consideration of the finite paths $z_\lambda \in X_\Lambda$ associated to $\lambda \in \Lambda$, this representation is moreover independent, and indeed one can identify $X_\Lambda$ with the character space $\widehat L_\Lambda$.

Consider also the boundary path space $\partial \Lambda \subseteq X_\Lambda$ considered in \cite{FMY}, which in the finitely aligned setting consists of all ``non-extendible'' paths, i.e. $z \in X_\Lambda$ such that if $z' \in X_\Lambda$ satisfies $d(z) \leq d(z')$ and $z(p) = z'(p)$ for all $p \in \mathbb Z^k_{\geq 0}$ with $p \leq d(z)$, then $z=z'$. The space $\partial \Lambda$ is closed, $S_\Lambda$-invariant and inherits an $S_\Lambda$-equivariant generating representation $U \colon L_\Lambda \to \cO_c(\partial \Lambda)$ of $L_\Lambda$ given by $U(\lambda) = V(\lambda) \cap \partial \Lambda$. We note that $U(\lambda)$ is nonempty for each $\lambda \in \Lambda$, as any finite path may be extended to some non-extendible path. If we assume that $\Lambda$ is row-finite and source-free, i.e. $0 <  \# v\Lambda^{(i)} < \infty$ for each $v \in \Lambda^0$ and $1 \leq i \leq k$, then $\partial \Lambda = \Lambda^\infty$ is the space of paths with shape $(\infty,\infty,\dots,\infty)$.
\eex

\section{Independent resolutions}
\label{s:IndRep}

\subsection{Definitions and general observations}
\label{ss:IndResDef}

The idea of independent resolutions is to find \an{resolutions} of ample groupoids by independent groupoids, i.e. those of the form $S \ltimes \widehat{L}$ as in \S~\ref{ss:eqLFWS}. 

\bdefin\label{d:resolution}
Let $G$ be an ample groupoid. An \emph{independent resolution} of $G$ is given by a sequence of ample groupoids $G_i$, $i = -1, 0, 1, 2, \dotsc$, with the following extra data and subject to the following conditions:
\setlength{\parindent}{0cm} \setlength{\parskip}{0cm}

\begin{itemize}
\item $G_{-1} = G$,
\item For every $i$, a decomposition $G_i^{0} = C_i \amalg O_i$, where $C_i$ is a closed $G_i$-invariant subspace (and hence $O_i$ is open and invariant), and $C_{-1} = G^0$,
\item For every $i \geq -1$, a groupoid isomorphism $\psi_i  \colon G_i \vert_{O_i} \isom G_{i+1} \vert_{C_{i+1}}$, where $G_i \vert_{O_i} \coloneq \menge{g \in G_i}{r(g), s(g) \in O_i}$ is the restriction of $G_i$ to $O_i$, and similarly for $G_{i+1} \vert_{C_{i+1}}$,
\item For each $i \geq 0$, an action $S_i \acts L_i$ of an inverse semigroup $S_i$ on a locally finite weak semilattice $L_i$ with an isomorphism $G_i \cong S_i \ltimes \widehat{L_i}$. 
\end{itemize}
Note that $\psi_i$ restricts to a homeomorphism $O_i \cong C_{i+1}$. 
\setlength{\parindent}{0cm} \setlength{\parskip}{0.5cm}

We call $(G_i)_i$ an independent resolution in the strong sense if for each $i$ we have $L_i = E(S_i)$ (the semilattice of idempotents in $S_i$) with the canonical action $S_i \acts E(S_i)$.
\edefin
\setlength{\parindent}{0cm} \setlength{\parskip}{0.5cm}

Let us now explain why independent resolutions always exist (even in the strong sense). First of all, given an ample groupoid $G$, we can always find an inverse semigroup $S$ of compact open bisections in $G$ such that $G \cong S \ltimes X$ (see \cite[Proposition~2.2]{KM}). Furthermore, we can choose a family $\cU$ of compact open subsets of $X$ such that $\cU$ generates the topology of $X$ in the sense of \cite[\S~2]{CEL2} (see also \cite{CEL1}), i.e. $\cU$ generates the entire family of compact open subsets through finite unions, finite intersections and relative complements. Let $E$ be the smallest semilattice of compact open subsets of $X$ containing $\cU$ which is $S$-invariant. By construction, $E$ is an $S$-invariant generating semilattice in $X$. Thus by Lemma \ref{lem:eqX=whV} we have the following homeomorphism, identifying $X$ with a closed $S$-invariant subspace of $\widehat{E}$:
\begin{align*}
 X \to & \menge{\chi \in \widehat{E}}{\substack{\text{for all } f, e_1, \dotsc, e_n \in E \text{ with } f = \bigcup_i e_i \text{ in } X,  \\ \chi(f) = 1 \text{ implies } \chi(e_i) = 1 \text{ for some } i}}, \\
 x \ma & \chi_x,
\end{align*}
where $\chi_x(e) = 1$ if and only if $x \in e$. Hence, we can set $G_0 \coloneq S \ltimes \widehat{E}$, define $C_0$ as the subspace of $\widehat{E}$ homeomorphic to $X$ under the map above, and $O_0 \coloneq \widehat{E} \setminus C_0$. Now proceed inductively, with $G_0 \vert_{O_0}$ in place of $G$. This produces an independent resolution (even in the strong sense).

\subsection{Systems of finite covers}
\label{ss:IndResCovers}

The above-mentioned existence statement is quite abstract. Let us now present a more systematic way of producing independent resolutions. Suppose $G \cong S \ltimes X$ as before and suppose $U$ is an $S$-equivariant, generating representation of a locally finite weak semilattice $L$ in $X$. As before, we may by Lemma \ref{lem:eqX=whV} identify $X$ with the closed $S$-invariant subspace 
\[ C_{-1} = \left\{ \chi \in \widehat L \suchthat  \substack{\text{for all } l, f_1, \dots, f_n \in L \text{ with } U(l) = \bigcup_i U(f_i), \\ \chi(l) = 1 \text{ implies } \exists i \text{ with } \chi(f_i) = 1 } \right\} \]
of $\widehat L$. We could as before have arranged $L$ to be a semilattice, but the extra flexibility allows us to more easily handle situations naturally endowed with a locally finite weak semilattice.

The inclusion $X \subseteq \widehat{L}$ induces a map $C_c(\widehat{L},\Zz) \onto C_c(X,\Zz)$. The kernel $I$ of this map is generated by the indicator functions on relative complements of the form 
\begin{equation}\label{kernel description one}
 \widehat L(l) \setminus \bigcup_{f \in F} \widehat L(f),
\end{equation}
where $l \in L$ and $F \subseteq L$ is a finite set with $l = \bigcup_{f \in F} f$ in $X$. This satisfies the property that if $k \in L$ is orthogonal to each $f \in F$, it is also orthogonal to $l$. We say that $F$ is a \textit{finite cover} of $l$. This motivates the following general definition.
\bdefin
Let $L$ be a locally finite weak semilattice and let $l \in L$. A finite set $F \subseteq L$ is a \textit{finite cover} of $l$ if
\setlength{\parindent}{0cm} \setlength{\parskip}{0cm}

\begin{itemize}
\item $f \leq l$ for each $f \in F$, and
\item if $k \in L$ with $k \perp f$ for each $f \in F$, then $k \perp l$.
\end{itemize}
A finite cover $F$ of $l$ is \emph{trivial} if $l \in F$. {The set of maximal elements in a finite cover is again a finite cover; it is natural to only consider finite covers whose elements are all maximal.} 
\setlength{\parindent}{0cm} \setlength{\parskip}{0.5cm}

Given a representation $U \colon L \to \cO_c(X)$ of $L$ on a totally disconnected locally compact Hausdorff space $X$, we say a finite cover $F$ of $l \in L$ is \emph{concrete} if $U(l) = \bigcup_{f \in F} U(f)$.
\edefin
\setlength{\parindent}{0cm} \setlength{\parskip}{0.5cm}

\bex\label{ex:HRG3}
Let $\Lambda$ be a finitely aligned higher rank graph and let $L_\Lambda = \Lambda \cup \{0\}$ be its locally finite weak semilattice (see Examples \ref{ex:HRG1} and \ref{ex:HRG2}). A finite cover $E$ of a vertex $v \in \Lambda^0$ is precisely a finite exhaustive subset in the sense of \cite[Definition 3.6]{FMY}. Suppose $v \in \Lambda^0$ is a vertex and $1 \leq i \leq k$ is an index such that $0 < \# \, v\Lambda^{(i)} < \infty$. Then for each $\lambda \in v\Lambda$ an explicit finite cover of $\lambda$ is given by \[ \lambda \Lambda^{(i)}.\]
This is concrete with respect to the representation $U \colon L_\Lambda \to \cO_c(\partial \Lambda)$ given by $U(\lambda) = \{ z \in \partial \Lambda \mid z(d(\lambda)) = \lambda \}$ because any infinite path $z \in \partial \Lambda$ starting with $\lambda$ must start with $\lambda \epsilon$ for some $\epsilon \in \Lambda^{(i)}$.
\eex

We introduce the following conventions. 
\bdefin
Let $L$ be a locally finite weak semilattice and let $F$ be a finite cover of an element $l \in L$. We write
\[ (l \mid F) \coloneq l - \bigvee_{f \in F} f \in \Zz L \]
for the relative complement of $F$ in $l$ (the join $\bigvee$ was introduced in Definition~\ref{def:join_ZL}). Given further an element $k \leq l$, the finite cover
\begin{equation}
    k \down F := {\max} \big( \bigcup_{f \in F} k \down f \big)
\end{equation}
is given by the set {of maximal elements in} $\bigcup_{f \in F} k \down f$, which is itself a finite cover of $k$. For a finite set $\cF$ of finite covers of $l$, we set 
\begin{equation}
(l \mid \cF) := l - \bigvee_{f \in F \in \cF} f = \prod_{F \in \cF} (l \mid F)
\end{equation}
We write $k \down \cF$ for the finite set $\{ k \down F \mid F \in \cF \}$ of finite covers of $k$. 
\edefin
The following computation is straightforward.
\blemma\label{l:cover mult}
Let $L$ be a locally finite weak semilattice, and let $\cF$ be a finite set of finite covers of $l \in L$. Then for each $h \in L$ we have $h(l \mid \cF) = \bigvee_{k \in h \down l} (k \mid k \down \cF) \in \Zz L$.
\elemma

Let us come back to the motivating case where $G \cong S \ltimes X$ and $U \colon L \to \cO_c(X)$ is an $S$-equivariant generating representation of a locally finite weak semilattice $L$ on $X$. Identifying $C_c(\widehat L, \Zz)$ with $\Zz L$, our description of the kernel $I$ of the surjection $C_c(\widehat L, \Zz) \onto C_c(X, \Zz)$ using \eqref{kernel description one} is now equivalent to the statement that $I$ is generated by the relative complements $(l \mid F)$ of certain finite covers in $L$. Consider, for each $l \in L$,
\[
 \R(l) = \menge{F \subseteq L}{F \text{ is a finite cover of } l, \, (l \mid F) \in I}.
\]
Because $I$ is an ideal in $\Zz L$, the system $\R = (\R(l))_{l \in L}$ of finite covers is closed under $\down$ in the following sense:
\setlength{\parindent}{0cm} \setlength{\parskip}{0cm}

\begin{enumerate}
\item[(M)] For all $l \in L$, $F \in \R(l)$ and $k \in L$ with $k \leq l$, $k \down F \in \R(k)$.
\end{enumerate}
\setlength{\parindent}{0cm} \setlength{\parskip}{0.5cm}

Moreover, $\R$ is $S$-invariant in the sense that for all $l \in L$, $s \in S$ with $l \in \dom_L(s)$, and for all $F \in \R(l)$, the finite cover \[s.F \coloneq \menge{s.f}{f \in F}\] is an element of $\R(s.l)$. This $S$-invariance holds because $I$ is $S$-invariant. Let us now turn this into an abstract definition.
\bdefin\label{d:system}
Let $L$ be a locally finite weak semilattice. A \emph{system of finite covers in $L$} is a tuple 
\[ \R = (\R(l))_{l \in L}\]
where each $\R(l)$ is a set of finite covers of $l \in L$, such that
\setlength{\parindent}{0cm} \setlength{\parskip}{0cm}

\begin{itemize}
\item for each $l \in L$, the trivial cover $\{l\}$ is an element of $\R(l)$, and
\item $\R$ satisfies condition (M): if for all $l \in L$, $F \in \R(l)$ and $k \in L$ with $k \leq l$, we have that $k \down F \in \R(k)$.
\end{itemize}
Given an action $S \acts L$ of an inverse semigroup $S$, we say that $\R$ is $S$-invariant if for all $l \in L$, $s \in S$ with $l \in \dom(s)$, and for all $F \in \R(l)$, we have that $s.F \coloneq \menge{s.f}{f \in F}$ lies in $\R(s.l)$. 
\edefin
\setlength{\parindent}{0cm} \setlength{\parskip}{0cm}

The set $\R(l)$ of all finite covers $F$ of $l$ for which $(l \mid F) \in I$, i.e. $l = \bigcup_{f \in F} f$ in $X$, may be impractically large. In concrete applications there is often a natural system of finite covers which is large enough to generate $I$ but small enough to be manageable. 
\setlength{\parindent}{0cm} \setlength{\parskip}{0.5cm}

\bdefin\label{d:thorough}
Let $U \colon L \to \cO_c(X)$ be a generating representation of a locally finite weak semilattice $L$ on a totally disconnected locally compact Hausdorff space $X$. Let $\R$ be a system of concrete finite covers in $L$. We say that $\R$ is \emph{thorough} if whenever $l \in L$ satisfies $U(l) \subseteq \bigcup_{i=1}^n U(l_i)$, with $l_i \in L$, there exist a positive integer $N$ and covers
\setlength{\parindent}{0cm} \setlength{\parskip}{0cm}

\begin{itemize}
    \item $F(l) \in \R(l)$,
    \item $F(l, f_1) \in \R(f_1)$ for all $f_1 \in F(l)$,
    \item $F(l, f_1, f_2) \in \R(f_2)$ for all $f_1 \in F(l)$ and $f_2 \in F(l,f_1)$,
    \item $\dotso$,
    \item $F(l, f_1, f_2, \dotsc, f_{N-1}) \in \R(f_{N-1})$    
    for all $f_1 \in F(l), f_2 \in F(l,f_1), \dots,$ and $f_{N-1} \in F(l, f_1, f_2, \dotsc, f_{N-2})$,
\end{itemize}
such that, for all $f_1 \in F(l)$, $f_2 \in F(l,f_1), \dots, f_{N-1} \in F(l, f_1, f_2, \dotsc, f_{N-1})$ and $f_N \in F(l, f_1, f_2, \dotsc, f_{N-1})$, there exists an index $i$ such that $f_N \leq l_i$.
\edefin
\setlength{\parindent}{0cm} \setlength{\parskip}{0.5cm}

\bex
\label{ex:HRG4}
Let $\Lambda$ be a higher rank graph of rank $k$ as in Examples~\ref{ex:HRG1}, \ref{ex:HRG2} and \ref{ex:HRG3}, and let us use the same notation as before. As in Example~\ref{ex:HRG3}, we assume that for every vertex $v$ in $\Lambda$ and $1 \leq i \leq k$, we have $0 < \# v \Lambda^{(i)} < \infty$. For $\lambda \in \Lambda = {L_\Lambda}\reg$ set 
\[ \R(\lambda) = \{\{\lambda\}\} \cup \{ \lambda \Lambda^{(i)} \mid 1 \leq i \leq k \} \]
and set $\R(0) = \{\{0\}\}$. 

Let us check the family $\R$ of finite covers satisfies condition (M). Let $\mu, \lambda \in \Lambda$ with $\mu \leq \lambda$ (i.e. $\mu \in \lambda \Lambda$) and let $1 \leq i \leq k$. If in the $i$th component we have $d(\mu)_i > d(\lambda)_i$, then the unique factorisation property implies that $\mu \down \lambda \Lambda^{(i)} = \{\mu\} \in \R(\mu)$. If instead $d(\mu)_i = d(\lambda)_i$, unique factorisation forces $\mu \down \lambda \Lambda^{(i)} = \mu \Lambda^{(i)} \in \R(\mu)$.

Moreover, the system $\R$ is thorough with respect to the generating representation $U \colon L_\Lambda \to \cO_c(\partial \Lambda)$ (see Examples~\ref{ex:HRG2} and \ref{ex:HRG3}). Indeed, suppose $\lambda, \lambda_1,\dots,\lambda_n \in \Lambda$ satisfy $U(\lambda) \subseteq \bigcup_{j=1}^n U(\lambda_j)$. The intuition for the following is that to deduce $f_N \leq \lambda_i$ for some $i$, we need only force $d(f_N) \geq d(\lambda_i)$ for each $i$ and apply the unique factorisation property. Pick $d = (d_1,\dots,d_k) \in \mathbb N^k$ with $d(\lambda) + d \geq d(\lambda_i)$ for each $1 \leq i \leq n$, and set $N = \sum_{i=1}^k d_i$. Pick $i_0,\dots,i_{N-1} \in \{1,\dots,k\}$ such that exactly $d_i$ are equal to $i$ for each $1 \leq i \leq k$. Finally, set $F(\lambda) = \lambda \Lambda^{(i_0)}$ and, by recursion on $1 \leq m \leq N-1$, given $f_1 \in F(\lambda), f_2 \in F(\lambda,f_1), \dots, f_m \in F(\lambda,f_1,\dots,f_{m-1})$, set $F(\lambda,f_1,\dots,f_m) = f_m \Lambda^{(i_m)}$. By construction, at $m = N-1$, each $f_N \in F(\lambda,f_1,\dots,f_{N-1})$ satisfies $d(f_N) = d(\lambda) + d$, and $U(f_N) \subseteq U(\lambda) \subseteq \bigcup_{j=1}^n U(\lambda_i)$. Pick $z \in U(f_N)$, which lies in $U(\lambda_i)$ for some $i$. Then $z = f_N z'$ and $z = \lambda_i z''$ for some $z'',z'' \in \partial \Lambda$, and since $d(\lambda_i) \leq d(f_N)$ it follows by the unique factorisation property that $f_N \in \lambda_i \Lambda$, i.e. $f_N \leq \lambda_i$.

Note that $\bigvee_{\varepsilon \in \lambda \Lambda^{(i)}} \lambda \varepsilon = \sum_{\varepsilon \in \lambda \Lambda^{(i)}} \lambda \varepsilon$ because elements in $\lambda \Lambda^{(i)}$ are pairwise orthogonal.
\eex

\blemma\label{l:thorough}
Let $U \colon L \to \cO_c(X)$ be a generating representation of a locally finite weak semilattice $L$ on a totally disconnected locally compact Hausdorff space $X$. 
Then the following are equivalent:
\setlength{\parindent}{0cm} \setlength{\parskip}{0cm}

\begin{enumerate}
\item\label{eq:thorough1} $\R$ is thorough.
\item\label{eq:thorough2} The complement $\widehat L \setminus X$ of the inclusion $X \subseteq \widehat L$ from Lemma \ref{lem:X=whV} is covered by relative complements of the form $\widehat L(l) \setminus \bigcup_{f \in F} \widehat L(f)$, where $l \in L$ and $F \in \R(l)$.
\item\label{eq:thorough3} The kernel $I$ of the map $C_c(\widehat{L},\Zz) \onto C_c(X,\Zz)$ induced by the canonical inclusion $X \subseteq \widehat{L}$ is the algebra generated by the indicator functions on relative complements of the form $\widehat L(l) \setminus \bigcup_{f \in F} \widehat L(f)$, where $l \in L$ and $F \in \R(l)$.
\item\label{eq:thorough4} The kernel of the map $\Zz L \to C_c(X,\Zz)$ which sends $l$ to $1_{U(l)}$ is the algebra generated by 
\[
 \menge{(l \mid F)}{l \in L, \, F \in \R(l)}.
\]
\end{enumerate}
\elemma
\setlength{\parindent}{0cm} \setlength{\parskip}{0cm}

\bproof
The equivalence of \eqref{eq:thorough3} and \eqref{eq:thorough4} is immediate via the isomorphism $\Zz L \isom C_c(\widehat{L},\Zz)$ from Lemma~\ref{lemma on identifying Z algebras}. The subalgebra of $\Zz L$ generated by $\menge{(l \mid F)}{l \in L, \, F \in \R(l)}$ is already an ideal by Lemma~\ref{l:cover mult}. This implies the equivalence of \eqref{eq:thorough2} with \eqref{eq:thorough3} and \eqref{eq:thorough4}.  
\setlength{\parindent}{0cm} \setlength{\parskip}{0.5cm}

Suppose that $\R$ is thorough. 
The set $\widehat L \setminus X$ is covered by sets of the form $\widehat L(l) \setminus \bigcup_{i=1}^n \widehat L(l_i)$ for elements $l, l_1,\dots,l_n \in L$ with $U(l) \subseteq \bigcup_{i = 1}^n U(l_i)$ by Lemma \ref{lem:X=whV}. Fix $l, l_1,\dots,l_n \in L$ with $U(l) \subseteq \bigcup_{i = 1}^n U(l_i)$. Applying thoroughness, we find $N \geq 1$ and a network of finite covers 
$F(l, f_1, f_2, \dotsc, f_{j-1}) \in \R(f_{j-1})$    
    for all $f_1 \in F(l), f_2 \in F(l,f_1), \dots,$ and $f_{j-1} \in F(l, f_1, f_2, \dotsc, f_{j-2})$, for each $1 \leq j \leq N$.
    For $1 \leq j \leq N$ recursively define 
    \[\mathscr F^{(j)} = \{(f_1,f_2,\dots,f_j) \mid (f_1,\dots,f_{j-1}) \in \mathscr F^{(j-1)}, f_j \in F(l,f_1,\dots,f_{j-1}) \}. \] 
    The definition of thoroughness tells us that for each $(f_1,\dots,f_N) \in \mathscr F^{(N)}$, there exists an index $i$ such that $f_N \leq l_i$, and therefore the following inclusion holds.
\begin{equation}\label{eq:thorough}
\widehat L(l) \mathbin{\Big\backslash} \bigcup_{i=1}^n \widehat L(l_i)  \subseteq \widehat L(l) \mathbin{\Big\backslash} \bigcup_{(f_1,\dots,f_N) \in \mathscr F^{(N)}} \widehat L(f_N)
\end{equation}
By construction we have the following inclusion
\begin{equation}\label{eq:telescope}
\widehat L(l) \mathbin{\Big\backslash} \bigcup_{(f_1,\dots,f_N) \in \mathscr F^{(N)}} \widehat L(f_N) \subseteq \bigcup_{\substack{0 \leq j \leq N-1, \\ (f_1,\dots,f_j) \in \mathscr F^{(j)}}} \left( \widehat L(f_j) \mathbin{\Big\backslash} \bigcup_{f_{j+1} \in F(l,f_1,\dots,f_j)} \widehat L(f_{j+1})  \right), 
\end{equation}
which demonstrates that \eqref{eq:thorough1} implies \eqref{eq:thorough2}.

Now suppose that $\R$ is not thorough. Then there are elements $l,l_1,\dots,l_n \in L$ with $U(l) \subseteq \bigcup_{i=1}^n U(l_i)$, such that for each $N \geq 1$ and each $F_1 \in \R(l)$, there exists $f_1 \in F_1$ such that for each $F_2 \in \R(f_1)$, there exists $f_2 \in F_2$, $\dots$, such that for each $F_{N} \in \R(f_{N-1})$, there is $f_N \in F_{N}$ such that for all $1 \leq i \leq n$, we have $f_N \not \leq l_i$.

Let $\mathscr S := \{ (k,R) \mid k \in L, \, R \in \R(k) \}$. We claim that there is a character $\chi \in \widehat L(l) \setminus \bigcup_{i=1}^n \widehat L(l_i)$ such that $\chi \notin \widehat L(k) \setminus \bigcup_{f \in R} \widehat L(f)$ for any $(k,R) \in \mathscr S$. As $U(l) \subseteq \bigcup_{i=1}^n U(l_i)$, this means that $\chi \in \widehat L \setminus X$, hence this is enough to prove that \eqref{eq:thorough2} implies \eqref{eq:thorough1}.

Let $A \subseteq \mathscr S$ be a finite subset of $\mathscr S$ of size $N \geq 1$. Set $f_0 := l$ and $A_0 := A$.
\setlength{\parindent}{0cm} \setlength{\parskip}{0cm}

\begin{itemize}
\item If there is $(k_1,R_1) \in A$ with $f_0 \leq k_1$, set $F_1 := f_0 \down R_1 \in \R(l)$ and $A_1 := A \setminus \{(k_1,R_1)\}$. Otherwise, set $A_1 := A$ and $F_1 := \{f_0\} \in \R(f_0)$. 
\item Pick $f_1 \in F_1$ such that for all $F_2 \in \R(f_1)$, there is $f_2 \in F_2$, such that, $\dots$, such that for each $F_N \in \R(f_{N-1})$, there is $f_N \in F_N$ such that for all $1 \leq i \leq n$, we have $f_N \not \leq l_i$.
\item If there is $(k_2,R_2) \in A_1$ with $f_1 \leq k_2$, set $F_2 := f_1 \down R_2 \in \R(f_1)$ and $A_2 := A_1 \setminus \{(k_2,R_2)\}$. Otherwise, set $A_2 := A_1$ and $F_2 := \{f_1\} \in \R(f_1)$. 
\item Pick $f_2 \in F_2$ such that for all $F_3 \in \R(f_2)$, there is $f_3 \in F_3$, such that, $\dots$, such that for each $F_N \in \R(f_{N-1})$, there is $f_N \in F_N$ such that for all $1 \leq i \leq n$, we have $f_N \not \leq l_i$.
\item $\dots$
\item If there is $(k_N,R_N) \in A_{N-1}$ with $f_{N-1} \leq k_N$, set $F_{N} := f_{N-1} \down R_N \in \R(f_{N-1})$ and $A_N := A_{N-1} \setminus \{(k_N,R_N)\}$. Otherwise, set $F_{N} := \{f_{N-1}\} \in \R(f_{N-1})$.
\item Pick $f_N \in F_N$ such that for all $1 \leq i \leq n$, we have $f_N \not \leq l_i$.
\end{itemize}
\setlength{\parindent}{0cm} \setlength{\parskip}{0cm}

We note that either $A_N = \emptyset$ or there is some $1 \leq j \leq N$ such that $f_{j-1} \not \leq k_j$ for each $(k_j,R_j) \in A_{j-1}$, in which case $f_N = f_{N-1} = \cdots = f_{j-1}$ and $A_N = A_{N-1} = \cdots = A_{j-1}$. Thus $f_N \not \leq k$ for each $(k,R) \in A_N$. Moreover, by construction we have that for each $(k,R) \in A \setminus A_N$ there is some $f \in R$ with $f_N \leq f$. Since  $f_N \not \leq l_i$ for each $1 \leq i \leq n$ by construction, we have shown that the principal character $\chi_{f_N}$ is an element of 
\begin{equation}
 K_A := \widehat L(l)  \mathbin{\Big\backslash}  \left[ \bigcup_{i=1}^n \widehat L(l_i) \cup \bigcup_{(k,R) \in A} \left(\widehat L(k) \mathbin{\Big\backslash} \bigcup_{f \in R} \widehat L(f)\right) \right].
\end{equation}
As $K_A \cap K_B = K_{A \cup B}$, the family $\{K_A \mid A \subseteq \mathscr S \text{ finite} \}$ has the finite intersection property. As each $K_A$ is compact, the intersection
\begin{equation}
 \bigcap_{A \subseteq \mathscr S \text{ finite}} K_A = \widehat L(l)  \mathbin{\Big\backslash}  \left[ \bigcup_{i=1}^n \widehat L(l_i) \cup \bigcup_{(k,R) \in \mathscr S} \left(\widehat L(k) \mathbin{\Big\backslash} \bigcup_{f \in R} \widehat L(f)\right) \right].
\end{equation}
is nonempty. This completes the proof that \eqref{eq:thorough2} implies \eqref{eq:thorough1}.
\eproof
\setlength{\parindent}{0cm} \setlength{\parskip}{0.5cm}

\subsection{A recursive construction}
\label{ss:IndResConst}

Now let us present a systematic way of constructing independent resolutions. Suppose $L_0$ is a locally finite weak semilattice and that $\R_0$ is a system of finite covers of $L_0$. 
\bdefin\label{d:recursive construction}
The locally finite weak semilattice of relative complements $L_1 = L_{L_0, \R_0}$ associated to $(L_0,\R_0)$ is the following set of idempotents in $\Zz L_0$:
\[
 L_1 \coloneq \menge{(l \mid \cF)}{l \in L_0, \, \emptyset \neq \cF \subseteq \R_0(l) \text{ finite}} \cup \gekl{0}.
\]
Given $(l\mid \cF) \in L_1$ and $F' \in \R_0(l)$, set
\begin{align*}
 R(l \mid \cF \mid F') &\coloneq \{(l \mid \cF \cup \gekl{F'}) \} \cup \menge{(f \mid f \down \cF)}{f \in F'},\\
 \R_1(l \mid \cF) &\coloneq \R_{L_0,\R_0}(l \mid \cF) \coloneq \menge{R(l \mid \cF \mid F')}{F' \in \R_0(l)}.
\end{align*}
\edefin

\bprop
$L_1$ is again a locally finite weak semilattice with the order coming from the idempotents in $\Zz L_0$, with the $\down$ operation given for $(k \mid \cE), (l \mid \cF) \in L_1$ by
\[ (k \mid \cE) \down (l \mid \cF) = \menge{(f \mid f \down (\cE \cup \cF))}{f \in k \down l}. \]
For each $(l\mid \cF) \in L_1$ and $F' \in \R_0(l)$ the set $R(l \mid \cF \mid F')$ is a finite cover of $(l \mid \cF)$ whose join in $\Zz L_0$ is
\[ (l \mid \cF \cup \{ F'\}) \vee \bigvee_{f \in F'} (f \mid f \down \cF) = (l \mid \cF). \]
Hence $\R_1$ is a system of finite covers in $L_1$. 
\eprop
\bproof
For $f \in k \down l$, we compute 
\[ f(k \mid \cE)(l \mid \cF) = (f \mid f \down \cE) (f \mid f \down \cF) = (f \mid f \down (\cE \cup \cF)), \]
so in particular $(f \mid f \down (\cE \cup \cF)) \leq (k \mid \cE), (l \mid \cF)$. Now suppose $0 \ne (g \mid \cG) \leq (k \mid \cE), (l \mid \cF)$. Because the coefficient of $g$ survives in the expansion of $(g \mid \cG) = (g \mid \cG)(k \mid \cE)$, we must have $g \leq k$, and similarly $g \leq l$. Thus $g \leq f$ for some $f \in k \down l$ and so 
\[ (g \mid \cG) \leq f(k \mid \cE)(l \mid \cF) = (f \mid f \down (\cE \cup \cF)).  \]
This establishes that $L_1$ is a weak semilattice with the claimed $\down$ structure. Local finiteness also follows, because if $X = \menge{(l_i \mid \cF_i)}{i \in I}$ is a finite subset of $L_1$, consider a finite sub-weak semilattice $T \subseteq L_0$ containing $\menge{l_i}{i \in I}$. Then $X$ is contained in the finite sub-weak semilattice
\[ \menge{(l \mid l \down \bigcup_{i \in J} \cF_i)}{l \in X, J \subseteq I}. \]
Now let $(l \mid \cF) \in L_1$ and $F' \in \R_0(l)$. For each $f \in F'$ the element $(f \mid f \down \cF)$ is orthogonal to $(l \mid \cF \cup \{F'\})$, so in $\Zz L_0$ we calculate
\begin{align*}
(l \mid \cF \cup \{ F'\}) \vee \bigvee_{f \in F'} (f \mid f \down \cF) & =  (l \mid \cF \cup \{ F'\}) + \bigvee_{f \in F'} (f \mid f \down \cF)  \\
& = (l \mid \cF)\left((l \mid F') + \bigvee_{f \in F'} f \right) \\
& = (l \mid \cF).
\end{align*}
It follows in particular that $R(l \mid \cF \mid F')$ is a finite cover of $(l \mid \cF)$. Finally, condition (M) follows from the computation that for $(l \mid \cF) \in L_1$, $F' \in \R_0(l)$ and $(k \mid \cE) \leq (l \mid \cF)$, we have 
\[ (k \mid \cE) \down R(l \mid \cF  \mid  F') = R(k \mid \cE  \mid  k \down F'). \qedhere \]
\eproof

\bremark
Let us collect the following immediate observations.
\setlength{\parindent}{0cm} \setlength{\parskip}{0cm}

\begin{enumerate}
\item[(i)] If $L_0$ is a semilattice, then so is $L_1$.
\item[(ii)] Given $S \acts L_0$ such that $\R_0$ is $S$-invariant, there is a canonical action $S \acts L_1$ with $s . (l \mid \cF) = (s.l \mid s. \cF)$ for $l \in \dom_L (s)$ by order isomorphisms of down-sets. Under this action, $\R_1$ is $S$-invariant and for all $l_1 = (l \mid \cF) \in L_1$, the stabiliser $S_{l_1}$ is a subgroup of $S_l$.
\item[(iii)] Recall that a cover $F$ of $l$ is trivial if $l \in F$. Then for all $0 \neq (l \mid \cF) \in L_1$, the set of non-trivial covers in $\R_1(l \mid \cF)$ embeds as a proper subset into the set of non-trivial covers in $\R_0(l)$. The reason is that $R(l \mid \cF \mid F')$ is trivial if $F' \in \cF$ or $F'$ is trivial.
\end{enumerate}
\eremark
\setlength{\parindent}{0cm} \setlength{\parskip}{0cm}

Let us now iterate this procedure to obtain a sequence $(L_0,\R_0)$, $(L_1,\R_1)$, $\dots$. We introduce the notation $(l \mid \cF  \mid  F')$ for $((l \mid \cF) \mid R(l \mid \cF  \mid  F'))$ in $\Zz L_1$. The canonical inclusion $L_{i+1} \into \Zz L_i$ induces an algebra homomorphism $\Zz L_{i+1} \xrightarrow{d_{i+1}} \Zz L_i$. Moreover, the algebra 
\[
 I_0 \coloneq \img d_1 = \spkl{\menge{l - \bigvee_{f \in F} f}{l \in L_0, \, F \in \R_0(l)}} \subseteq \Zz L_0
\]
generated by $L_1$ in $\Zz L_0$ is an ideal. The following is the key observation:
\btheo
\label{thm:IndRes}
The sequence
\[
 \dotso \to \Zz L_2 \xrightarrow{d_2} \Zz L_1 \xrightarrow{d_1} \Zz L_0 \to \Zz L_0 / I_0 \to 0
\]
is exact.
\etheo
\bproof
Exactness at $\Zz L_0$ and $\Zz L_0 / I_0$ is by construction. As the rest of the sequence is constructed iteratively it suffices to show exactness at $\Zz L_1$. The inclusion $\img d_2 \subseteq \ker d_1$ follows from the fact that for any $(l \mid \cF  \mid  F') \in L_2$, the join of the elements of $R(l \mid \cF \mid F')$ within $\Zz L_0$ is $(l \mid \cF)$. For the reverse inclusion $\ker d_1 \subseteq \img d_2$, let $f \in \ker d_1 \subseteq \Zz L_1$. Suppose $f$ can be expressed as
\[f = \sum_{i \in A} a_i (l_i \mid \cF_i),\]
for some finite index set $A$ and integers $a_i$. Our strategy is to show that $f$ can be translated by an element of $\img d_2$ to an element 
\[ g = \sum_{j \in B} b_j (k_j \mid \cE_j) \in f + \img d_2 \]
so that the finite weak semilattice generated within $L_1$ by all the terms present in the expression strictly reduces. Iterating this reduces $f$ to $0$ mod $\img d_2$. 
\setlength{\parindent}{0cm} \setlength{\parskip}{0.5cm}

Thus, let $L_f$ denote the weak semilattice generated within $L_1$ by $\menge{l_i}{i \in A} \cup \bigcup_{i \in A, \, F \in \cF_i} F$. It suffices to build $g$ as above with $k_j$ and $\cE_j$ all taken from $L_f$ but avoiding some maximal element. We can assume that each $a_i \in \Zz$ is non-zero and that $(l_i \mid \cF_i)$ is non-zero and distinct for each $i \in A$. Pick $i_0 \in A$ with $l_{i_0}$ maximal in $\{l_i \mid i \in A \}$ (and thus in $L_f$) and set $A_0 = \menge{i \in A}{l_i = l_{i_0}}$. 
Since $l_{i_0}$ is maximal, the $l_{i_0}$-coefficient of $d_1(f) = 0 \in \Zz L_0$ is $\sum_{i \in A_0} a_i$, which must therefore vanish.
 Set $\cF := \bigcup_{i \in A_0} \cF_i$ and set
\[ g := \sum_{i \in A_0} a_i \left( (l_{i_0} \mid \cF_i) - (l_{i_0} \mid \cF) - \bigvee_{F \in \cF \setminus \cF_i} (l_{i_0} \mid \cF_i  \mid  F) \right) + \sum_{i \in A \setminus A_0} a_i(l_i \mid \cF_i) \in \Zz L_1. \]
Note that the $(l_{i_0} \mid \cF_i  \mid  F)$ above are understood to be in $\Zz L_1$, and are in $\img d_2$ by construction. Since $\sum_{i \in A_0} a_i (l_{i_0} \mid \cF) = 0$, we have $g \in f + \img d_2$. For each $i \in A_0$ we compute in $\Zz L_1$:
\begin{align*}
\bigvee_{F \in \mathcal F \setminus \mathcal F_i} (l_{i_0} \mid \mathcal F_i  \mid  F) & = \bigvee_{F \in \mathcal F \setminus \mathcal F_i} \left( (l_{i_0} \mid \mathcal F_i) - (l_{i_0} \mid \mathcal F_i \cup \{ F \}) - \bigvee_{k \in F} (k \mid k \down \mathcal F_i) \right) \\
& = (l_{i_0} \mid \mathcal F_i) - \prod_{F \in \mathcal F \setminus \mathcal F_i} \left( (l_{i_0} \mid \mathcal F_i \cup \{ F \}) + \bigvee_{k \in F} (k \mid k \down \mathcal F_i) \right) 
\end{align*}
Each $k \in F \in \mathcal F$ is strictly below $l_{i_0}$ because each $(l_i \mid \mathcal F_i)$ is non-zero. The product term above expands out in $\Zz L_1$ as
\begin{align*}
\prod_{F \in \mathcal F \setminus \mathcal F_i} \left( (l_{i_0} \mid \mathcal F_i \cup \{ F \}) + \bigvee_{k \in F} (k \mid k \down \mathcal F_i) \right)  & =  \prod_{F \in \mathcal F \setminus \mathcal F_i} (l_{i_0} \mid \mathcal F_i \cup \{ F \}) + \sum_{j \in B_i} c_{i,j} (k_{i,j} \mid \mathcal E_{i,j}) \\
& = (l_{i_0} \mid \mathcal F) + \sum_{j \in B_i} c_{i,j} (k_{i,j} \mid \mathcal E_{i,j})
\end{align*}
for some finite index set $B_i$ and for each $j \in B_i$ some $c_{i,j} \in \Zz$, some $k_{i,j} \in L_f$ strictly below $l_{i_0}$ and some finite sets $\mathcal E_{i,j}$ of finite covers of $k_{i,j}$ contained within $L_f$. We have thus constructed an element
\[ g = \sum_{i \in A_0}  \sum_{j \in B_i} a_i c_{i,j} (k_{i,j} \mid \cE_{i,j})   + \sum_{i \in A \setminus A_0} a_i(l_i \mid \cF_i) \in f + \img d_2 \]
which is entirely expressed within $L_f \setminus \{ l_{i_0} \}$, a proper sub-weak semilattice of $L_f$.
\eproof
\setlength{\parindent}{0cm} \setlength{\parskip}{0.5cm}

\bremark
We again collect some observations:
\setlength{\parindent}{0cm} \setlength{\parskip}{0cm}

\begin{enumerate}
\item[(i)] If each $F \in \R_{N}(l)$ is trivial for all $l \in L_{N}$, then $L_{N + 1} = \gekl{0}$. Since 
\[\sup_{l \in L_i} \# \menge{F \in \R_i(l)}{F \text{ non-trivial}}\]
strictly decreases as $i$ increases, then $L_{N+1} = 0$ if $N \coloneq \sup_{l \in L_0} \# \{F \in \R_0(l) \mid F \text{ non-trivial}\}$ is finite. In this case we obtain an exact sequence of finite length 
\[
 0 \to \Zz L_{N} \to \dotso \to \Zz L_2 \to \Zz L_1 \to \Zz L_0 \to \Zz L_0 / I_0 \to 0.
\]
\item[(ii)] Let us dualize. Let $I_i \subseteq \Zz L_i$ be the kernel of $\Zz L_i \to \Zz L_{i-1}$. Then $I_i$ gives rise to an open subspace $U_i$ of $\widehat{L_i}$. Set $C_i \coloneq \widehat{L_i} \setminus U_i$. Exactness gives us an isomorphism $\Zz L_i / I_i \cong I_{i-1}$ and thus a homeomorphism $C_i \cong U_{i-1}$. 
\item[(iii)] Now assume that $S$ acts on $L_0$ such that $\R_0$ is $S$-invariant. We then obtain $S$-actions on $L_i$ and all $\R_i$ are $S$-invariant. Also, the exact sequence from Theorem~\ref{thm:IndRes} is $S$-invariant. This gives us an independent resolution $G_i \coloneq S \ltimes \widehat{L_i}$ of $G \coloneq S \ltimes X$, where $X \coloneq C_0 \cong U_0$. In particular, if the finiteness condition in item (i) holds, then we obtain a finite length independent resolution of $G$.
\end{enumerate}
\eremark
\setlength{\parindent}{0cm} \setlength{\parskip}{0.5cm}

Combining the above remarks with Lemma \ref{l:thorough}, we obtain the following.

\btheo\label{t:resolution}
Let $S$ be an inverse semigroup with actions on a totally disconnected locally compact Hausdorff space $X$ and a locally finite weak semilattice $L$, and let $U \colon L \to \cO_c(X)$ be an $S$-equivariant generating representation of $L$ on $X$. Suppose that $\R$ is a thorough $S$-invariant system of concrete finite covers in $L$. Set $(L_0,\R_0) = (L,\R)$  and consider the recursively constructed $(L_k,\R_k)_{k \geq 0}$ (Definition \ref{d:recursive construction}). Then the sequence $(G_i := S \ltimes \widehat{L_i})_{i \geq 0}$ of groupoids is an independent resolution 
\[\cdots \to G_i \to \cdots \to G_0 \to G \to \emptyset \]
of $G := S \ltimes X$. Moreover, if $N \coloneq \sup_{l \in L} \# \{F \in \R(l) \mid F \text{ non-trivial}\}$ is finite, then $G_i = \emptyset$ for $i \geq N+1$. If $L$ is a semilattice, then so is $L_k$ for $k \geq 0$.
\etheo

\bex
\label{ex:HRG5}
Let $\Lambda$ be a source-free row-finite higher rank graph of rank $k$ as in Examples \ref{ex:HRG1}, \ref{ex:HRG2}, \ref{ex:HRG3} and \ref{ex:HRG4}. Our construction yields an independent resolution of length $k$. More precisely, the sequence of weak semilattices constructed above looks as follows:
\begin{align*}
 L_0 &= L_{\Lambda} = \Lambda \cup \gekl{0},\\
 L_1 &= \menge{(\lambda \mid I_1)}{\lambda \in \Lambda; \, \emptyset \neq I_1 \subseteq \gekl{1, \dotsc, k}} \cup \gekl{0},\\
 L_2 &= \menge{(\lambda \mid I_1 \mid I_2)}{\lambda \in \Lambda; \, \emptyset \neq I_1, I_2 \subseteq \gekl{1, \dotsc, k} \text{ disjoint}} \cup \gekl{0},\\
 &\dotso,\\
 L_j &= \menge{(\lambda \mid I_1 \mid \dotso \mid I_j)}{\lambda \in \Lambda; \, \emptyset \neq I_1, \dotsc, I_j \subseteq \gekl{1, \dotsc, k} \text{ disjoint}} \cup \gekl{0},\\
 &\dotso,\\
 L_k &= \menge{(\lambda \mid I_1 \mid \dotso \mid I_k)}{\lambda \in \Lambda; \, \emptyset \neq I_1, \dotsc, I_k \subseteq \gekl{1, \dotsc, k} \text{ disjoint}} \cup \gekl{0},
\end{align*}
where for $L_k$, we must have $\# \, I_1 = \cdots = \# \, I_k = 1$. Here we define recursively 
\[
 (\lambda \mid I_1) \coloneq (\lambda \mid \{\lambda \Lambda^{(i)} \mid i \in I_1 \}), 
\]
and 
\[
 (\lambda \mid I_1 \mid \dotso \mid I_j) \coloneq \prod_{i \in I_j} ((\lambda \mid I_1 \mid \dotso \mid I_{j-1}) \mid R((\lambda \mid I_1 \mid \dotso \mid I_{j-1}) \mid \lambda \Lambda^{(i)}))
\]
in $\Zz L_{j-1}$, where we define $R((\lambda \mid I_1 \mid \dotso \mid I_{j-1}) \mid \lambda \Lambda^{(i)})$ as the union of the subsets 
\[
 \gekl{(\lambda \mid I_1 \mid \dotso \mid I_{j-1} \cup \gekl{\lambda \Lambda^{(i)}})}
\]
and 
\[
 \menge{ \left(f \suchthat f \down \menge{R((\lambda \mid I_1 \mid \dotso \mid I_{j-2}) \mid \lambda \Lambda^{(i')})}{i' \in I_{j-1}} \right)}{f \in R((\lambda \mid I_1 \mid \dotso \mid I_{j-2}) \mid \lambda \Lambda^{(i)})}
\]
of $L_{j-1}$.
\eex

\section{Discretisation}
\label{s:Disc}

Now we come to the crucial idea of discretisation. Let $L$ be a locally finite weak semilattice equipped with an action of an inverse semigroup $S$. Write $L\reg \coloneq L \setminus \gekl{0}$, and consider the restriction $S \acts L\reg$ of our action to $L\reg$. Viewing $L\reg$ as a discrete topological space, we view $S \acts L\reg$ as an action of $S$ by partial homeomorphisms on $L\reg$ and form the partial transformation groupoid $S \ltimes L\reg$.

The idea of discretisation is to find a suitable weak equivalence between $S \ltimes L\reg$ and $S \ltimes \widehat{L}$.

Note that we always have an embedding $L\reg \into \widehat{L}$ sending $k \in L\reg$ to the character $\chi_k$ given by $\chi_k(l) = 1$ if and only if $k \leq l$. Hence we obtain an induced embedding $S \ltimes L\reg \into S \ltimes \widehat{L}$. But this is not the desired weak equivalence. Instead, we construct a proper étale correspondence $\Omega \colon S \ltimes L\reg \to S \ltimes \widehat{L}$.

To define \'etale correspondences, we need the notion of an action of an \'etale groupoid $G$ on a topological space $X$. Such an action consists of an \emph{anchor map} $\tau \colon X \to G^0$ assumed continuous, and an \emph{action map} $(g,x) \mapsto g . x \colon G \times_{G^0} X = \{ (g,x) \in G \times X \mid s(g) = \tau(x) \} \to X$ satisfying $\tau(g.x) = r(g)$, $\tau(x).x = x$ and $(gh).x=g.(h.x)$ whenever the expressions make sense. The action is said to be \emph{free} if $g . x = x$ implies $g \in G^0$, \emph{proper} if $G \times_{G^0} X \to X \times X$, $(g,x) \mapsto (g.x,x)$ is proper, and \emph{\'etale} if the anchor map is a local homeomorphism.  
The action groupoid $G \ltimes X$ is an \'etale groupoid with arrow space $G \times_{G^0} X$ and unit space $X$ with multiplication given by $(g,h.x)(h,x) = (gh,x)$.

\begin{definition}[\'Etale correspondence, see \cite{AKM22}]\label{d:correspondence}
Let $G$ and $H$ be \'etale groupoids. A $G$-$H$ \emph{bispace} is a topological space $\Omega$ equipped with a left action of $G$ and a right action of $H$ that commute; that is, the left anchor $\rho \colon \Omega \to G^0$ is $H$-invariant, the right anchor $\sigma \colon \Omega \to H^0$ is $G$-invariant and $g . (\omega . h) = (g . \omega) . h$ for compatible $g \in G$, $\omega \in \Omega$ and $h \in H$. An \emph{\'etale correspondence} $\Omega \colon G \to H$ is a $G$-$H$ bispace $\Omega$ such that the right action $\Omega \rightacts H$ is free, proper and \'etale.
The correspondence $\Omega$ is \emph{proper} if the map $\Omega/H \to G^0$ induced by the left anchor is proper. If $G = G^0$ and $H = H^0$ are locally compact Hausdorff spaces, the data of an \'etale correspondence reduces to a locally compact Hausdorff space $Z$ equipped with a continuous map $\rho \colon Z \to X$ and a local homeomorphism $\sigma \colon Z \to Y$; we refer to $Z \colon X \to Y$ as a \emph{topological correspondence}. Note that $Z \colon X \to Y$ is proper if and only if $\rho \colon Z \to X$ is proper.
\end{definition}

Let us introduce the following notation.

\bdefin
Let $G$ be an étale groupoid, let $X_1$ and $X_2$ be right and left $G$-spaces respectively with anchors $\sigma \colon X_1 \to G^0$ and $\rho \colon X_2 \to G^0$ and let $K \subseteq G$ be an open subgroupoid. The \emph{fibre product} of $X_1$ and $X_2$ over $K$
\[ X_1 \times_K X_2 \coloneq \{ (x_1,x_2) \in X_1 \times X_2 \mid \sigma(x_1) = \rho(x_2) \in K^0 \} / K \]
is the quotient defined by $(x_1 . g, x_2) \sim (x_1, g . x_2)$ for $x_1 \in X_1$, $x_2 \in X_2$ and $g \in K$ with $\sigma(x_1) = r(g)$ and $\rho(x_2) = s(g)$. We equip $X_1 \times_K X_2$ with the quotient topology and write $[x_1,x_2]$ for the class of a pair $(x_1,x_2)$. If $K_1, K_2 \subseteq G$ are open subgroupoids with actions $X_1 \rightacts K_1 \acts X_2 \rightacts K_2 \acts X_3$ such that the left and right actions on $X_2$ commute, we identify $(X_1 \times_{K_1} X_2) \times_{K_2} X_3$ with $X_1 \times_{K_1} (X_2 \times_{K_2} X_3)$ via $[[x_1,x_2],x_3] \mapsto [x_1,[x_2,x_3]]$ and write $[x_1,x_2,x_3] \in X_1 \times_{K_1} X_2 \times_{K_2} X_3$.
\edefin

\bremark
In the above, $X_1$ and $X_2$ may be viewed as partial $K$-spaces in that the anchor maps to $K^0$ are only defined on some open subset.
\eremark

The composition $\Lambda \circ \Omega \colon G \to K$ of \'etale correspondences $\Omega \colon G \to H$ and $\Lambda \colon H \to K$ is given by the $G$-$H$ bispace $\Omega \times_H \Lambda$. \'Etale correspondences between \'etale groupoids form a $(2,1)$-category, with isomorphisms of bispaces as $2$-morphisms. In this paper we shall only apply properties of the ordinary category whose morphisms are isomorphism classes of \'etale correspondences, thus we often work with \'etale correspondences (sometimes implicitly) only up to isomorphism.

\begin{definition}\label{d:discretisation}
Let $L$ be a locally finite weak semilattice. The (topological) \emph{discretisation correspondence} $\Omega^0 \colon L\reg \to \widehat L$ is the topological correspondence 
\[ \Omega^0 = \coprod_{l \in L\reg} \widehat L(l) = \left\{(l,\chi) \in L\reg \times \widehat L \suchthat \chi(l) = 1 \right\}, \]
with left anchor map $\rho(l,\chi) = l$ and right anchor map $\sigma(l,\chi) = \chi$. The discretisation correspondence is proper because $\widehat L(l)$ is compact for each $l \in L\reg$.
\end{definition}

When $L$ is equipped with an action of an inverse semigroup $S$, the discretisation correspondence $\Omega^0$ also inherits an action of $S$ with $\dom_{\Omega^0} (s) = \coprod_{l \in \dom_{L\reg} (s)} L(l)$ for each $s \in S$ and $s . (l,\chi) = (s . l, s . \chi)$. The discretisation correspondence $\Omega^0 \colon L\reg \to \widehat L$ becomes an $S$-equivariant topological correspondence in the following sense.

\begin{definition}
Let $S$ be an inverse semigroup. An \emph{$S$-equivariant topological correspondence} is a topological correspondence $Z \colon X \to Y$ equipped with actions of $S$ on $X$, $Y$ and $Z$, such that the anchor maps $\rho \colon Z \to X$ and $\sigma \colon Z \to Y$ are $S$-equivariant with $\rho^{-1}(\dom_X (s)) = \dom_Z (s)$ for each $s \in S$. 
\end{definition}
The composition of two $S$-equivariant topological correspondences is again an $S$-equivariant topological correspondence.
The condition that $\rho^{-1}(\dom_X (s)) = \dom_Z (s)$ for each $s \in S$ ensures well-definition of the na\"ive action of $S \ltimes X$ on $Z$. 
\begin{lemma}
Let $S$ be an inverse semigroup and let $Z \colon X \to Y$ be an $S$-equivariant topological correspondence. Then $S \ltimes X$ acts on $Z$ with anchor $\rho \colon Z \to X$ via $[s,x] . z = s . z$ so that $(S \ltimes X) \ltimes Z = S \ltimes Z$.
\end{lemma}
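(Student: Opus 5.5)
We will verify directly that the formula $[s,x] . z := s . z$ (for $z \in Z$ with $\rho(z) = x$ and $x \in \dom_X(s)$) gives a well-defined continuous action of $S \ltimes X$ on $Z$ with anchor $\rho$. Then we will identify the resulting action groupoid with $S \ltimes Z$.

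\emph{Definedness and well-definition.} If $s(\,[s,x]\,) = x = \rho(z)$ with $x \in \dom_X(s)$, then $z \in \rho^{-1}(\dom_X(s)) = \dom_Z(s)$, so $s.z$ makes sense. This is exactly where the hypothesis $\rho^{-1}(\dom_X(s)) = \dom_Z(s)$ is used. Now suppose $[s_1,x] = [s_2,x]$. Then there is $t \leq s_1, s_2$ with $x \in \dom_X(t)$, and again $z \in \dom_Z(t)$. Since $t = s_i e$ for an idempotent $e$, the maps $t$ and $s_i$ agree on $\dom_Z(t)$. Hence $s_1.z = t.z = s_2.z$.

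\emph{Action axioms.} By equivariance of $\rho$, we have $\rho(s.z) = s.\rho(z) = s.x = r([s,x])$. For units, $[e,x]$ with $e$ idempotent acts as the identity. For composition, $[t,s.x][s,x] = [ts,x]$, and $(ts).z = t.(s.z)$ holds for inverse semigroup actions on $\dom_Z(ts)$. This set contains $z$ because $\rho(z) \in \dom_X(ts)$.

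\emph{Continuity.} The fibre product $(S \ltimes X) \times_X Z$ is covered by the open sets $\{[s,x]\} \times \dom_Z(s)$, coming from the basic bisections $[s,\dom_X(s)]$. On each such set the action map is $z \mapsto s.z$, which is a partial homeomorphism. So the action is continuous.

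\emph{Identification with $S \ltimes Z$.} Define $\Phi\colon (S \ltimes X) \ltimes Z \to S \ltimes Z$ by $([s,\rho(z)], z) \mapsto [s,z]$. The main point to check is that $\Phi$ is well defined and bijective. That is, $[s_1,\rho(z)] = [s_2,\rho(z)]$ should hold if and only if $[s_1,z] = [s_2,z]$.
\begin{itemize}
\item[($\Rightarrow$)] A witness $t \leq s_1,s_2$ with $\rho(z) \in \dom_X(t)$ gives $z \in \dom_Z(t)$ by the domain condition. So $t$ also witnesses $[s_1,z] = [s_2,z]$.
\item[($\Leftarrow$)] A witness $t$ with $z \in \dom_Z(t)$ gives $\rho(z) \in \dom_X(t)$ by equivariance.
\end{itemize}
Compatibility with multiplication and inversion is immediate from the formulas. $\Phi$ maps basic open sets $\{[s,\cdot]\} \times V$ to basic open bisections $[s,V]$ and conversely, so it is a homeomorphism.

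The only step requiring care is the two-sided compatibility of the germ relations, and both directions reduce to the domain condition $\rho^{-1}(\dom_X(s)) = \dom_Z(s)$.
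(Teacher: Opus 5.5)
Your proposal is correct and follows the same line as the paper. The paper gives no written proof; it only remarks that the condition $\rho^{-1}(\dom_X(s)) = \dom_Z(s)$ ensures that the naive action is well defined. Your direct verification fills in exactly those routine details: well-definedness via a common lower bound $t$, the action axioms, continuity on the basic bisections, and the two-sided comparison of the germ relations giving $(S \ltimes X) \ltimes Z \cong S \ltimes Z$.
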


This means that $S \ltimes X \acts S \ltimes Z$ given by $[s,x] . [t,z] = [st,z]$ defines an action correspondence $S \ltimes X \to S \ltimes Z$. Composing this with the \'etale correspondence associated to the \'etale homomorphism $[s,z] \mapsto [s,\sigma(z)] \colon S \ltimes Z \to S \ltimes Y$, we obtain the following \'etale correspondence.

\bdefin
Let $S$ be an inverse semigroup and let $Z \colon X \to Y$ be an $S$-equivariant topological correspondence. The \emph{associated \'etale correspondence} $Z(S) \colon S \ltimes X \to S \ltimes Y$ has underlying space $Z(S) = Z \times_Y (S \ltimes Y)$, with left action given by $[s,x] . (z,[t,y]) = (s . z, [st,y])$ and right action by $(z,[s,y]) . [t,y'] = (z,[st,y'])$. For $g = [s,x]$ and $z \in Z$ with $\rho(z) = x$, we write $g|_z = [s,\sigma(z)]$, which is the unique element of $S \ltimes Y$ satisfying $g . (z,\sigma(z)) = (g . z, g|_z) = (g . z,\sigma(z)) . g|_z \in Z(S)$. 
\edefin

Note that $Z(S) \colon S \ltimes X \to S \ltimes Y$ is proper if and only if $Z \colon X \to Y$ is proper. The construction of the \'etale correspondence associated to an $S$-equivariant topological correspondence is functorial with respect to composition of correspondences.

\bdefin
\label{def:DiscCorr}
Let $L$ be a locally finite weak semilattice equipped with the action of an inverse semigroup $S$. The \emph{discretisation correspondence} $\Omega \colon S \ltimes L\reg \to S \ltimes \widehat{L}$ is the \'etale correspondence $\Omega = \Omega^0(S)$ associated to the $S$-equivariant topological discretisation correspondence $\Omega^0 \colon L\reg \to \widehat L$. As a space, we have
\begin{equation*}
\Omega = \coprod_{l \in L\reg} \menge{[s,\chi] \in S \ltimes \widehat{L}}{\chi \in \dom_{\widehat L}(s), \, s.\chi \in \widehat{L}(l)}.    
\end{equation*}
It is proper because $\Omega^0$ is proper.
\edefin

\bremark
We have $\Omega \cong \Omega^0 \times_{\widehat{L}} (S \ltimes \widehat{L})$. However, $\Omega$ cannot be described as $(S \ltimes L\reg) \times_{L\reg} \Omega^0$. The reason is that for $[l,s,\chi] \in \Omega$, $l$ might not lie in $\ran_{L\reg} (s)$. For the same reason, the canonical action $S \acts \Omega^0$ does not describe $\Omega$ as $S \ltimes \Omega^0$.
\eremark

\subsection{Discretisation in groupoid homology}
\label{ss:Disc_H}

Let $L$ be a locally finite weak semilattice equipped with an action of an inverse semigroup $S$. As explained in \cite[Corollary~3.6]{Mil2}, $\Omega$ induces the map $\Hlgy_*(\Omega) \colon \Hlgy_*(S \ltimes L\reg) \to \Hlgy_*(S \ltimes \widehat{L})$ in groupoid homology. The argument for the following result is identical to that in \cite[Example~3.10]{Mil2}. We reproduce it here for convenience.
\btheo
\label{thm:Disc_H}
For every inverse semigroup action $S \acts L$ on a locally finite weak semilattice $L$, the discretisation correspondence $\Omega$ from Definition~\ref{def:DiscCorr} induces an isomorphism in groupoid homology
\[
 \Hlgy_*(\Omega) \colon \Hlgy_*(S \ltimes L\reg) \isom \Hlgy_*(S \ltimes \widehat{L}).
\]
\etheo
\setlength{\parindent}{0cm} \setlength{\parskip}{0cm}

\bproof
The \'etale correspondence $\Omega \colon S \ltimes L\reg \to S \ltimes \widehat L$ decomposes into the correspondences associated to the action $S \ltimes L\reg \acts \Omega^0$ and the \'etale homomorphism $S \ltimes \Omega^0 \to S \ltimes \widehat{L}$. Set $G = S \ltimes L\reg$ and $H = S \ltimes \widehat{L}$ and $K = S \ltimes \Omega^0$. Following \cite[Examples~3.8 and 3.9]{Mil2}, the induced map $\Hlgy_*(\Omega)$ is induced by a chain map $\Phi \colon \mathbb Z[G^\bullet] \to \mathbb Z[H^\bullet]$ of Matui's chain complexes. The chain map is induced at $n \geq 0$ (contravariantly) by the map $K^n \to G^n, [s_{n-1},\dots,s_0,l,\chi] \mapsto [s_{n-1},\dots,s_0,l]$ and (covariantly) by the local homeomorphism $K^n \to H^n, [s_{n-1},\dots,s_0,l,\chi] \mapsto [s_{n-1},\dots,s_0,\chi]$. This boils down to the map
\begin{align*}
\Phi_n \colon \mathbb Z[G^n] & \to \mathbb Z[H^n] \\
\delta_x & \mapsto 1_{V_x},
\end{align*}
where for each $x = [s_{n-1},\dots,s_0,l] \in G^n$, the compact open set $V_x \subseteq H^n$ is given by 
\[ V_x = \{[s_{n-1},\dots,s_0,\chi] \mid \chi \in \widehat L(l) \}. \]
We will show that $\Phi_n$ is in fact an isomorphism for each $n \geq 0$.
\setlength{\parindent}{0cm} \setlength{\parskip}{0.5cm}

For $x,y \in G^n$ we set $x \leq y$ if $V_x \subseteq V_y$, this happens exactly if $x = [s_{n-1},\dots,s_0,k]$ and $y = [s_{n-1},\dots,s_0,l]$ with $k \leq l$. It follows that $\{V_x \mid x \leq y \}$ is a generating independent concrete locally finite weak semilattice in $V_y$. By Lemma \ref{lemma on identifying Z algebras}, the restriction $\Zz[\{x \mid x \leq y \}] \to \Zz[V_y]$ of $\Phi_n$ is an isomorphism. Surjectivity of $\Phi_n$ follows because $\{V_y \mid y \in G^n\}$ covers $H^n$ and hence $\{\Zz[V_y] \mid y \in G^n \}$ spans $\Zz[H^n]$. 

For injectivity of $\Phi_n$, let $F \subseteq G^n$ be finite and let $(a_x)_{x \in F}$ be integers with $\sum_{x \in F} a_x 1_{V_x} = 0$. Pick a maximal element $y = [s_{n-1},\dots,s_0,l] \in F$. Then $0 = \sum_{x \in F} a_x 1_{V_x}$ evaluated at the element $[s_{n-1},\dots,s_0,\chi_l] \in H^n$ recovers $a_y$ because $[s_{n-1},\dots,s_0,\chi_l] \in V_x$ if and only if $y \leq x$. Thus $a_y = 0$ and by induction we conclude $\sum_{x \in F} a_x \delta_x = 0$.
\eproof
\setlength{\parindent}{0cm} \setlength{\parskip}{0.5cm}

Since $S \ltimes L\reg$ is a discrete groupoid, it is Morita equivalent to a disjoint union of isotropy groups. Therefore, we arrive at the following consequence.
\bcor
\label{cor:OSF_H}
For every inverse semigroup action $S \acts L$ on a locally finite weak semilattice $L$, we have the following isomorphism in groupoid homology:
\[
 \bigoplus_{[l] \in S \backslash L\reg} \Hlgy_*((S \ltimes L\reg)_l^l) \cong \Hlgy_*(S \ltimes \widehat{L}).
\]
Here $S \backslash L\reg$ is the set of orbits for the action $S \acts L\reg$, and $(S \ltimes L\reg)_l^l$ denotes the isotropy at the unit $l \in L\reg = (S \ltimes L\reg)^{0}$.
\ecor

\subsection{Discretisation in K-theory}
\label{ss:Disc_K}

The discretisation correspondence $\Omega \colon S \ltimes L\reg \to S \ltimes \widehat L$ induces a proper $\cs$-correspondence $\cs(\Omega) \colon \cs(S \ltimes L\reg) \to \cs(S \ltimes \widehat L)$. In fact, because $\Omega$ is built from the $S$-equivariant topological correspondence $\Omega^0 \colon L\reg \to \widehat L$, this descends to a proper $\cs$-correspondence $\csr(\Omega) \colon \csr(S \ltimes L\reg) \to \csr(S \ltimes \widehat L)$ at the reduced level (see Proposition \ref{free and reduced}).
Our goal is to prove that this induces an isomorphism in K-theory $\K_*(\csr(S \ltimes L\reg) ) \isom \K_*(\csr(S \ltimes \widehat L))$.

\btheo\label{thm:Disc_K}
Let $L$ be a countable locally finite weak semilattice equipped with the action of a countable inverse semigroup $S$ and suppose that $S \ltimes \widehat L$ is Hausdorff. Then the discretisation correspondence $\Omega$ induces an isomorphism
\begin{equation*}
 \Ktop_*(\Omega) \colon \Ktop_*(S \ltimes L\reg) \isom \Ktop_*(S \ltimes \widehat{L})
\end{equation*}
of the left-hand side of the Baum--Connes conjecture for the groupoids $S \ltimes L\reg$ and $S \ltimes \widehat{L}$. If $S \ltimes \widehat L$ and $S \ltimes L\reg$ satisfy the Baum--Connes conjecture, then $\csr(\Omega) \colon \csr(S \ltimes L\reg) \to \csr(S \ltimes \widehat L)$ induces an isomorphism
\[ \K_*(\csr(\Omega)) \colon \K_*(\csr(S \ltimes L\reg) ) \isom \K_*(\csr(S \ltimes \widehat L)).\]
\etheo

The following is the analogue of Corollary~\ref{cor:OSF_H} (and the argument is the same).
\bcor
\label{cor:OSF_K}
Let $L$ and $S$ be as in Theorem~\ref{thm:Disc_K}. Under the same hypotheses as in Theorem~\ref{thm:Disc_K}, we have the following isomorphism in K-theory:
\[
 \bigoplus_{[l] \in S \backslash L\reg} \K_*(\csr((S \ltimes L\reg)_l^l)) \cong \K_*(S \ltimes \widehat{L}).
\]
Here $S \backslash L\reg$ is the set of orbits for the action $S \acts L\reg$, and $(S \ltimes L\reg)_l^l$ denotes the isotropy at the unit $l \in L\reg = (S \ltimes L\reg)^{0}$.
\ecor

\bremark
\label{rem:BC_discGpd}
$S \ltimes L\reg$ satisfies the Baum--Connes conjecture if and only if, for every unit $l \in L\reg = (S \ltimes L\reg)^{0}$, the isotropy group $(S \ltimes L\reg)_l^l$ satisfies the Baum--Connes conjecture. The reason is that $S \ltimes L\reg$ is Morita equivalent to the groupoid $\coprod_{[l] \in S \backslash L\reg} (S \ltimes L\reg)_l^l$, and satisfying the Baum--Connes conjecture is preserved under Morita equivalence. The latter fact is well known, but we could not find an explicit reference. It follows for example from the functoriality results in \cite{Mil3}. It was stated before \cite[Corollary 7.4]{Mil3} that because each isotropy group in $S \ltimes E\reg$ embeds in an isotropy group of $S \ltimes \widehat E$, the former groupoid inherits satisfaction of the Baum--Connes conjecture from the latter. The second-named author would like to take the opportunity to correct this mistake; \cite[Corollary 7.4 and Example 7.6]{Mil3} should include the extra hypothesis that the relevant subgroups satisfy the Baum--Connes conjecture.
\eremark

The proof of Theorem \ref{thm:Disc_K} is long and technical, so let us first summarise the main ideas. Section \ref{proof section} is dedicated to the proof. 
Satisfaction of the Baum--Connes conjecture reduces the problem from operator K-theory to topological K-theory. The main tool we will apply is \cite[Corollary 7.1]{Mil3}, which gives a sufficient condition for the map (see \cite[Example 5.11]{Mil3}) 
\[ \Ktop_*(\Omega) \colon \Ktop_*(S \ltimes L\reg) \to \Ktop_*(S \ltimes \widehat L) \]
induced by $\Omega$ to be an isomorphism. Setting $X = L^\times$, $Y = \widehat L$, $G = S \ltimes X = S \ltimes L\reg$ and $H = S \ltimes Y = S \ltimes \widehat L$, this condition asks that for each $n \geq 0$, an induced map of derived functors
\begin{equation}\label{eq:derived functor map}
\Lz_n(\Ind_\Omega,\alpha_\Omega,f_\Omega) \colon \Lz_n^{\mathcal F_G}\K_*(G \ltimes C_0(X)) \to \Lz_n^{\mathcal F_H}\K_*(H \ltimes C_0(Y)) 
\end{equation}
with respect to certain families $\mathcal F_G$ and $\mathcal F_H$ of proper groupoids is an isomorphism.

The derived functors and the functoriality appearing in \eqref{eq:derived functor map} require a considerable amount of KK-theoretic machinery from \cite{Mil3} (and references therein \cite{BP24, Mil1, MN06, MN10, Meyer08, PY22}) to formulate, which we cover in Section \ref{KK prelim}. Morally speaking, the group $\Lz_n^{\mathcal F_G}\K_*(G \ltimes C_0(X))$ is the $n$th homology group of $G$ for some non-standard homology theory of groupoids. Indeed, for Hausdorff ample groupoids $G$ with torsion-free isotropy, this will coincide with ordinary groupoid homology $\Hlgy_n(G)$ for $* = 0$ and vanish for $* = 1$ (this is the case already covered in \cite{Mil3}). For now, it suffices to say that $\Lz_n(\Ind_\Omega,\alpha_\Omega,f_\Omega)$ is the map induced in homology by a particular chain map 
\begin{equation}\label{eq:summary chain map}
\Phi \colon \K_*(G \ltimes P_\bullet) \to \K_*(H \ltimes Q_\bullet)
\end{equation}
involving certain chain complexes $P_\bullet$ and $Q_\bullet$ of $G$-$\cs$-algebras and $H$-$\cs$-algebras. 

In Section \ref{groupoid models for maps}, we provide convenient groupoid models for the $\cs$-algebras $G \ltimes P_n$ and $H \ltimes Q_n$ appearing in \eqref{eq:summary chain map}, and provide concrete \'etale groupoid correspondences which induce the chain map $\Phi$.

In Section \ref{isomorphism subsection}, we show that these \'etale groupoid correspondences modelling $\Phi$ can themselves be expressed as discretisation correspondences where the acting inverse semigroup is finite. In this situation, we can prove an isomorphism in K-theory in an elementary fashion, completing the proof of Theorem \ref{thm:Disc_K}.

\section{Proof of K-theoretic discretisation}\label{proof section}

\subsection{KK-theoretic preliminaries}\label{KK prelim}

Here we provide a summary of the construction from \cite{Mil3} of the derived functors appearing in \eqref{eq:derived functor map}. The construction takes place within the Meyer--Nest framework of homological algebra in triangulated categories \cite{MN06, MN10, Meyer08}, utilising the triangulated structure on the Kasparov category $\KK^G$ of a second countable Hausdorff étale groupoid $G$ \cite[Section 1.1]{BP24}. The derived functors are the left derived functors of the K-theory functor $\K_*(G \ltimes -) \colon \KK^G \to \ab$ with respect to a countable family $\mathcal F$ of proper open subgroupoids of $G$ satisfying condition (P) \cite[\S 3]{Mil3}; for every $x \in G^0$ and every finite subgroup $\Gamma \leq G^x_x$ of the isotropy group $G^x_x$, there is a member $K \in \mathcal F$ with $\Gamma \subseteq K$. Before spelling out exactly what being a derived functor with respect to $\mathcal F$ means, let us describe a canonical choice of the family $\mathcal F$ from ~\cite[Example 3.6]{Mil3} for the transformation groupoid $G = S \ltimes X$ of the action of an inverse semigroup $S$ on a locally compact Hausdorff space $X$.
For each $e \in E\reg = \{ e \in S \mid e^2 = e, e \ne 0 \}$ and finite subgroup $F \subseteq S_e \coloneq \menge{s \in S}{ses^{-1} = e}$, set
\begin{align*}
G(F) & \coloneq \menge{[s,l] \in G}{s \in F, \, l \in \dom_X(e)}. 
\end{align*}
Setting $F_{\down} = \menge{sd}{s \in F, \, d \in E} = \menge{sd}{s \in F, \, d \in E, \, d \leq e}$, which is the inverse semigroup generated by $F$ and $\{d \in E \mid d \leq e \}$, the action $S \acts X$ restricts to an action $F_{\down} \acts \dom_X(e) $, and we have a canonical isomorphism $G(F) \cong F_{\down} \ltimes \dom_X(e)$. Note that $G(F)$ is a compact open subgroupoid of $G$. Now set
\begin{align*}
    & \cF_{\rm fin}^X \coloneq \menge{G(F)}{e \in E\reg, \, F \subseteq S_e \text{ finite subgroup}}.
\end{align*}
Then the family $\cF_{\rm fin}^X$ satisfies condition (P) in the sense of \cite[\S~3]{Mil3}. 

Now suppose $Z \colon X \to Y$ is an $S$-equivariant topological correspondence, and write $G = S \ltimes X$, $H = S \ltimes Y$ and $Z(S) \colon G \to H$ for the resulting \'etale correspondence. Then $\cF_{\rm fin}^X$ and $\cF_{\rm fin}^Y$ are compatible under $Z(S) \colon G \to H$ in the sense of \cite[Definition 3.20]{Mil3} (see \cite[Example 3.21]{Mil3}). Assume further that $G$ and $H$ are second countable and Hausdorff. Following \cite[\S~3.6]{MN10} and \cite[\S~2.1]{PY22} (see also \cite[\S~2]{Mil3}), consider the induction and restriction functors
\begin{align*}
I_G \colon \prod_{K \in \cF_{\rm fin}^X} \KK^K & \to \KK^G  & \dashv & &R_G \colon \KK^G & \to \prod_{K \in \cF_{\rm fin}^X} \KK^K \\
(C_K)_{K \in \cF_{\rm fin}^X} & \mapsto \bigoplus_{K \in \cF_{\rm fin}^X} \Ind^G_K C_K && & B & \mapsto (\Res^K_G(B))_{K \in \cF_{\rm fin}^X}.
\end{align*}
which satisfy an adjunction $I_G \dashv R_G$ \cite[Theorem 2.3]{BP24}. Setting $P_n \coloneq (I_G R_G)^{n+1} C_0(X)$, the counit $\epsilon \colon I_G R_G \Rightarrow \id$ of the adjunction gives rise to a chain complex
\begin{equation}\label{eq:projective resolution}
\cdots \to P_n \xrightarrow{\delta_n} \cdots \xrightarrow{\delta_1} P_0 \xrightarrow{\delta_0} C_0(X) \to 0 
\end{equation}
in $\KK^G$ with 
\begin{equation*}
\delta_n := \sum_{i=0}^n (-I_G R_G)^i(\epsilon_{(I_G R_G)^{n-i} C_0(X)}) \colon (I_G R_G)^{n+1} C_0(X) \to (I_G R_G)^n C_0(X).
\end{equation*} This is a $\ker R_G$-projective resolution in the sense of \cite{MN10}. The derived functor $\Lz_n^{\cF_{\rm fin}^X}\K_*(G \ltimes C_0(X))$ is realised as the $n$th homology group of the chain complex
\[ \cdots \to \K_*(G \ltimes P_n) \to \cdots \to \K_*(G \ltimes P_0) \to 0. \]
The derived functor $\Lz_n^{\cF_{\rm fin}^Y}\K_*(H \ltimes C_0(Y))$ for $H = S \ltimes Y$ is obtained by performing the analogous construction for $H$ and $\cF_{\rm fin}^Y$ on $C_0(Y)$, setting $Q_n \coloneq (I_H R_H)^{n+1} C_0(Y)$ to obtain a $\ker R_H$-projective resolution $Q_\bullet \to C_0(Y)$.

Functoriality of these derived functors is developed in \cite[\S 5]{Mil3}; let us explain how the $S$-equivariant topological correspondence $Z \colon X \to Y$ induces this functoriality to produce a map 
\begin{equation}\label{eq:derived functor map two}
\Lz_n(\Ind_{Z(S)},\alpha_{Z(S)},f_{Z(S)}) \colon \Lz_n^{\cF_{\rm fin}^X}\K_*(G \ltimes C_0(X)) \to \Lz_n^{\cF_{\rm fin}^Y}\K_*(H \ltimes C_0(Y)). 
\end{equation}
via \cite[Proposition 5.12]{Mil3}. The three components $\Ind_{Z(S)}$, $\alpha_{Z(S)}$ and $f_{Z(S)}$ are constructions we utilise from \cite{Mil1}. Let us describe what they do in our setting.

Let $\Omega \colon G \to H$ be a second countable Hausdorff étale correspondence of second countable Hausdorff étale groupoids with $G^0 = X$, $H^0 = Y$. We will utilise the \emph{induction functor} 
\[ \Ind_\Omega \colon  \KK^H \to \KK^G\]
constructed in \cite{Mil1}, which is a triangulated functor, but we shall need its precise form only for commutative $H$-$\cs$-algebras. Given a locally compact Hausdorff $H$-space $W$, the induced algebra $\Ind_\Omega C_0(W)$ is given by
\[ \Ind_\Omega C_0(W) = C_0(\Omega \times_H W). \]
If $Z \colon V \to W$ is a proper $H$-equivariant topological correspondence, the induced morphism $\Ind_\Omega \cs(Z) \colon \Ind_\Omega C_0(V) \to \Ind_\Omega C_0(W)$ is modelled by the proper $G$-equivariant topological correspondence $\Omega \times_H Z \colon \Omega \times_H V \to \Omega \times_H W$.

We will chiefly consider the case of $\Omega \colon G \to H$ given by $Z(S) \colon S \ltimes X \to S \ltimes Y$ for an $S$-equivariant topological correspondence $Z \colon X \to Y$. In this case, the $G$-space $\Omega \times_H W$ is canonically isomorphic to $Z \times_Y W$, with the action of $G = S \ltimes X$ given by $[s,x] . (z,w) = (s . z, [s,\sigma(z)] . w)$, or, in alternate notation, $g  . (z,w) = (g . z, g|_z . w)$. We thus make the identification
\begin{equation}
\Ind_{Z(S)} C_0(Y) = C_0(Z \times_Y W). 
\end{equation}

The induction functor is based on subgroupoid induction functors, and indeed for a proper open subgroupoid $K \subseteq G$, the bispace $G \acts G_{K^0} \rightacts K$ is an étale correspondence $G \to K$, and for any $K$-$\cs$-algebra $B$ the induced algebra $\Ind_{G_{K^0}} B$ is the generalised fixed point algebra $(s^* B)^K$. We refer to this induced algebra as $\Ind^G_K B$, and we note that it is naturally $G$-equivariantly Morita equivalent to the induced algebra $\Ind^G_K B$ constructed in \cite{BP24} (see \cite[Remark 2.1]{BP24}).\footnote{The point of the induced algebra $\Ind^G_K B$ constructed in \cite{BP24} is that it works even for open subgroupoids $K \subseteq G$ for which the action of $K$ on $G_{K^0}$ is not proper, but we do not require this extra generality.} Thus for a locally compact Hausdorff $K$-space $W$, the induced algebra $\Ind^G_K C_0(W)$ is given by $C_0(G \times_K W)$.

From \cite{Mil1} we shall also utilise the \emph{induction crossed product}
\[ \Omega \ltimes B \colon G \ltimes \Ind_\Omega B \to H \ltimes B \]
associated to a Hausdorff étale correspondence $\Omega \colon G \to H$ and an $H$-$\cs$-algebra $B$. This is a proper $\cs$-correspondence natural in $B$, and we write 
\[\alpha_\Omega(B) \colon \K_*(G \ltimes \Ind_\Omega B) \to \K_*(H \ltimes B) \]
for the induced map in K-theory.
We shall only need the precise form of $\Omega \ltimes B$ for the \'etale correspondence $\Omega = Z(S) \colon S \ltimes X \to S \ltimes Y$ of an $S$-equivariant topological correspondence $Z \colon X \to Y$ and a commutative $H$-$\cs$-algebra $B = C_0(W)$. In this setting, 
the algebra $G \ltimes \Ind_{Z(S)} C_0(W)$ is modelled by $S \ltimes (Z \times_Y W)$ and $Z(S) \ltimes C_0(W)$ is modelled by the proper étale correspondence
\begin{equation}\label{crossed product model}
(Z \times_Y W)(S) \colon S \ltimes (Z \times_Y W) \to S \ltimes W
\end{equation}
induced by the $S$-equivariant local homeomorphism $(z,w) \mapsto w \colon Z \times_Y W \to W$.

The final ingredient from \cite{Mil1} we need is the $G$-equivariant $*$-homomorphism $f_{Z(S)} \colon C_0(X) \to \Ind_\Omega C_0(Y)$ in the case that $\Omega \colon G \to H$ is proper. This is simply induced by the $G$-equivariant proper map $\rho \colon Z \to X$, using the identification $\Ind_{Z(S)} C_0(Y) = C_0(Z)$. The composition $\alpha_{Z(S)}(C_0(Y)) \circ \K_*(G \ltimes f_{Z(S)}) \colon \K_*(\cs(G)) \to \K_*(\cs(H))$ recovers $\K_*(\cs(\Omega))$.

Let us return to the setting of an $S$-equivariant topological correspondence $Z \colon X \to Y$, which we moreover assume to be proper. Once again assume that $G = S \ltimes X$ and $H = S \ltimes Y$ are second countable and Hausdorff and assume that $Z$ is second countable. We obtain a second countable Hausdorff proper \'etale correspondence $Z(S) \colon G \to H$. 
We now describe the construction of the induced map $\Lz_n(\Ind_{Z(S)},\alpha_{Z(S)},f_{Z(S)})$ \eqref{eq:derived functor map two} via \cite[Proposition 5.12]{Mil3}. As the families $\cF_{\rm fin}^X$ and $\cF_{\rm fin}^Y$ are compatible under ${Z(S)}$ in the sense of \cite[Definition 3.20]{Mil3}, it follows by \cite[Example 5.8]{Mil3} that $(\Ind_{Z(S)}, \alpha_{Z(S)}, f_{Z(S)})$ is an ABC cycle in the sense of \cite[Definition 5.7]{Mil3}.
Following \cite[Proposition 5.12]{Mil3}, we consider from \eqref{eq:projective resolution} the projective resolutions $P_\bullet \to C_0(X)$ and $Q_\bullet \to C_0(Y)$ with $P_n = (I_G R_G)^{n+1} C_0(X)$ and $Q_n = (I_H R_H)^{n+1} C_0(Y)$ and the induced resolution $\Ind_{Z(S)} Q_\bullet \to \Ind_{Z(S)} C_0(Y)$ in $KK^G$. A chain map $\tilde f \colon P_\bullet \to \Ind_{Z(S)} Q_\bullet$ in $\KK^G$ which sits over $f_{Z(S)} \colon C_0(X) \to \Ind_{Z(S)} C_0(Y)$ exists and is unique up to chain homotopy for abstract reasons, but as we shall see there is a natural construction in our setting. The map $\Lz_n(\Ind_{Z(S)},\alpha_{Z(S)},f_{Z(S)})$ is induced in homology by the chain map $\Phi \colon \K_*(G \ltimes P_\bullet) \to \K_*(H \ltimes Q_\bullet)$ given by the composition
\begin{equation}\label{chain map crucial}
\Phi \colon \K_*(G \ltimes P_\bullet) \xrightarrow{\K_*(G \ltimes \tilde f)} \K_*(G \ltimes \Ind_{Z(S)} Q_\bullet) \xrightarrow{\alpha_{Z(S)}(Q_\bullet)} \K_*(H \ltimes Q_\bullet). 
\end{equation}
If $\Phi$ is an isomorphism, then so too is $\Lz_n(\Ind_{Z(S)},\alpha_{Z(S)},f_{Z(S)})$, and through \cite[Corollary 7.1]{Mil3} we deduce that $\Ktop_*({Z(S)}) \colon \Ktop_*(G) \to \Ktop_*(H)$ is an isomorphism. Let us summarise what we have reduced the problem to.

\bprop\label{isomorphism proposition before concrete models}
Let $S$ be a countable inverse semigroup and let $Z \colon X \to Y$ be a proper $S$-equivariant topological correspondence with $X$, $Y$ and $Z$ second countable and $G = S \ltimes X$ and $H = S \ltimes Y$ Hausdorff.
Consider the projective resolutions $P_\bullet \to C_0(X)$, $Q_\bullet \to C_0(Y)$ as in \eqref{eq:projective resolution}. 
Suppose $\tilde f \colon P_\bullet \to \Ind_{Z(S)} Q_\bullet$ is a chain map in $\KK^G$ which sits over $f_{Z(S)} \colon C_0(X) \to \Ind_{Z(S)} C_0(Y)$ such that the composition
\[ \Phi \colon \K_*(G \ltimes P_\bullet) \xrightarrow{\K_*(G \ltimes \tilde f)} \K_*(G \ltimes \Ind_{Z(S)} Q_\bullet) \xrightarrow{\alpha_{Z(S)}(Q_\bullet)} \K_*(H \ltimes Q_\bullet) \]
is an isomorphism of chain complexes. Then $\Ktop_*({Z(S)}) \colon \Ktop_*(G) \to \Ktop_*(H)$ is an isomorphism.
\eprop

\subsection{Concrete groupoid models for maps}\label{groupoid models for maps}

Let us identify a concrete groupoid model for the chain map \eqref{chain map crucial}. 

We first consider the chain map $P_\bullet \to \Ind_{Z(S)} Q_\bullet$, whose components are elements in $\KK^G$. We will model these elements using proper topological correspondences that are equivariant with respect to $G$.

\bdefin
Let $G$ be an \'etale groupoid. A \emph{$G$-equivariant topological correspondence} is a topological correspondence $Z \colon X \to Y$ equipped with actions of $G$ on $X$, $Y$ and $Z$ such that the anchor maps are $G$-equivariant. The transformation groupoid $G \ltimes Z$ inherits the structure of an \'etale correspondence $G \ltimes Z \colon G \ltimes X \to G \ltimes Y$ with anchors $\rho \circ r$ and $\sigma \circ s$ and actions given by $(g_1,x) . (g_2,z) = (g_1 g_2, z)$ and $(g_1,z) . (g_2,y) = (g_1 g_2, g_2^{-1} . z)$.
\edefin

We describe the construction of a $G$-equivariant $\cs$-correspondence from a $G$-equivariant topological correspondence in Appendix \ref{correspondence functor}, where we moreover consider $G$-equivariant \'etale correspondences and establish compatibility with crossed products and composition. In particular, proper $G$-equivariant topological correspondences $Z \colon X \to Y$ functorially induce morphisms $C_0(X) \to C_0(Y)$ in $\KK^G$ by Proposition \ref{prop:G equivariant correspondence functor}.

The algebras $P_n = (I_G R_G)^{n+1} C_0(X)$ and $Q_n = (I_H R_H)^{n+1} C_0(Y)$ admit the direct sum decompositions
\begin{align*}
P_n & = \bigoplus_{F_n,\dotsc, F_0} \Ind^G_{G(F_n)} \Res^{G(F_n)}_G \cdots \Ind^G_{G(F_0)} \Res^{G(F_0)}_G C_0(X) \\
Q_n & = \bigoplus_{F_n,\dotsc, F_0} \Ind^H_{H(F_n)} \Res^{H(F_n)}_H \cdots \Ind^H_{H(F_0)} \Res^{H(F_0)}_H C_0(Y)
\end{align*}
with summands indexed by tuples $(F_n, \dotsc, F_0)$, where $F_m$ is a finite subgroup of the maximal subgroup $S_{e_m} = \{ s \in S \mid s^* s = e_m = s s^* \}$ at some $e_m \in E\reg$. We shall see that the chain map \eqref{chain map crucial} inducing $\Lz_n(\Ind_{{Z(S)}}, \alpha_{{Z(S)}}, f_{{Z(S)}})$ respects these direct sum decompositions, so that we may work with individual summands. Moreover, $P_n$, $\Ind_{{Z(S)}}  Q_n$ and $Q_n$ as well as their direct summands are commutative $\cs$-algebras, and we may therefore work at the space level.

Given $n \geq 0$, $e_0, \dots, e_n \in E\reg$ and finite subgroups $F_i \leq S_{e_i}$ for $0 \leq i \leq n$ as above, let $G_m \coloneq G(F_m)$ and $H_m \coloneq H(F_m)$. 
The spectra of the direct summands of $P_n$ and $Q_n$ associated to $(F_n,\dotsc,F_0)$ are given respectively by the spaces
\begin{align}\label{definition of X(F_n...F_0) Y(F_n...F_0)}
 X(F_n,\dotsc,F_0) & \coloneq G \times_{G_n} \cdots \times_{G_1} G \times_{G_0} X, \\
 Y(F_n, \dotsc, F_0) & \coloneq  H \times_{H_n} \cdots \times_{H_1} H \times_{H_0} Y. \nonumber
\end{align}
Formally, a general element of $X(F_n, \dotsc, F_0)$ is given by the class $[g_n,\dotsc,g_0,x]$ of an $(n+2)$-tuple, where $g_i = [s_i,x_i] \in G$ for some $s_i \in S$ and $x_i \in X$ and $x = x_0 \in \dom_X (e_0)$. But since the elements $x_{i+1} \in X$ for $0 \leq i < n$ are determined by $x_{i+1} = s_i . x_i$, we will often instead denote this element by $[s_n,\dotsc,s_0,x]$.\footnote{We hope that no confusion is caused by the competing notations $[g_n,\dotsc,g_0,x]$ and $[s_n,\dotsc,s_0,x]$.} Similarly, we write $[s_n, \dotsc, s_0, y]$ for the element $[[s_n,y_n], [s_{n-1},y_{n-1}], \dotsc, [s_0,y_0], y] \in Y(F_n, \dotsc, F_0)$, so that 
\begin{align*}
 X(F_n,\dotsc,F_0) & = \{ [s_n,\dotsc,s_0,x] \mid s_0,\dotsc,s_n \in S, x \in \dom_X (s_n \cdots s_0) \}, \\
 Y(F_n,\dotsc,F_0) & = \{ [s_n,\dotsc,s_0,y] \mid s_0,\dotsc,s_n \in S, y \in \dom_Y (s_n \cdots s_0) \}. 
\end{align*}
For each $(F_n,\dotsc,F_0)$ we construct a proper $G$-equivariant topological correspondence 
\begin{equation}\label{eq: chain map correspondences}
Z(F_n,\dotsc,F_0) \colon X(F_n, \dotsc, F_0) \to Z \times_Y Y(F_n,\dotsc,F_0)
\end{equation}
as follows, noting that $Z \times_Y Y(F_n,\dotsc,F_0)$ is the spectrum of the associated direct summand of $\Ind_{Z(S)} Q_n$. The correspondence has underlying space
\begin{align*}
    Z(F_n, \dotsc, F_0) & \coloneq G \times_{G_n} \cdots \times_{G_1} G \times_{G_0} Z \\
    & = \{ [g_n,\dotsc,g_0,z] \mid (g_n,\dots,g_0) \in G^{n+1}, z \in \rho^{-1}(s(g_0)) \} \\
    & = \{ [s_n,\dotsc,s_0,z] \mid s_0,\dotsc,s_n \in S, z \in \dom_Z (s_n \cdots s_0) \},
\end{align*}
which inherits a left $G$-action from its leftmost factor of $G$. We define $G$-equivariant maps (under the two pictures of elements described above)
\begin{align}\label{anchor map definitions}
\rho^{(n)} \colon  Z(F_n,\dots,F_0) & \to X(F_n,\dots,F_0) \nonumber \\
 [g_n,\dots,g_0,z]  & \mapsto [g_n,\dots,g_0,\rho(z)] \nonumber\\
 [s_n,\dots,s_0,z]  & \mapsto [s_n,\dots,s_0,\rho(z)] \nonumber\\
 \sigma^{(n)} \colon  Z(F_n,\dots,F_0) & \to Y(F_n,\dots,F_0) \nonumber\\
 [g_n,\dots,g_0,z]  & \mapsto [g_n |_{g_{n-1} \cdots g_0 . z}, \dotsc, g_0 |_z,\sigma(z)] \\
 [s_n,\dots,s_0,z]  & \mapsto [s_n,\dots,s_0,\sigma(z)] \nonumber\\
 p^{(n)} \colon  Z(F_n,\dots,F_0) & \to Z \nonumber\\
 [g_n,\dots,g_0,z]  & \mapsto g_n \cdots g_0 . z \nonumber\\
 [s_n,\dots,s_0,z]  & \mapsto s_n \cdots s_0 . z. \nonumber
\end{align}
We set the anchor maps for the topological correspondence \eqref{eq: chain map correspondences} to be $\rho^{(n)} \colon  Z(F_n,\dots,F_0) \to X(F_n,\dots,F_0)$ and $p^{(n)} \times \sigma^{(n)} \colon Z(F_n,\dots,F_0) \to Z \times_Y Y(F_n,\dots,F_0)$. Since $Z(F_n,\dots,F_0)$ is a $G$-equivariant proper topological correspondence, it induces a morphism $C_0(X(F_n,\dotsc,F_0)) \to \Ind_{Z(S)} C_0(Y(F_n,\dotsc,F_0))$ in $\KK^G$, and summing over $(F_n,\dotsc,F_0)$ we arrive at a morphism $P_n \to \Ind_{Z(S)} Q_n$. Our aim is to show that this defines a chain map $P_\bullet \to \Ind_{Z(S)} Q_\bullet$ over $f_{Z(S)} \colon C_0(X) \to \Ind_{Z(S)} C_0(Y)$.

The boundary maps of the projective resolutions $P_\bullet = (I_G R_G)^{\bullet +1} C_0(X) \to C_0(X)$ and $Q_\bullet = (I_H R_H)^{\bullet +1} C_0(Y) \to C_0(Y)$ are constructed from the counits of the adjunctions $I_G \dashv R_G$ and $I_H \dashv R_H$; we shall make use of an explicit form for these counits. In the following lemma we note that a $G$-equivariant local homeomorphism $V \to W$ may be considered as a $G$-equivariant topological correspondence $V \colon V \to W$ with left anchor map the identity.

\begin{lemma}\label{counit model}
Let $G$ be a second countable Hausdorff étale groupoid, let $K \subseteq G$ be a proper open subgroupoid and let $W$ be a second countable locally compact Hausdorff $G$-space with anchor map $\tau \colon W \to G^0$. Then after identifying $\Ind^G_K \Res^K_G C_0(W)$ with $C_0(G \times_K W)$, the counit 
\[\epsilon_{C_0(W)} \colon \Ind^G_K \Res^K_G C_0(W) \to C_0(W)\]
of the adjunction $\Ind^G_K \dashv \Res^K_G$ is induced by the $G$-equivariant local homeomorphism
\begin{align*}
G \times_K W & \to W \\
[g,w] & \mapsto g . w,
\end{align*}
for $g \in G$ and $w \in W$ with $s(g) = \tau(w) \in K^0$.
\end{lemma}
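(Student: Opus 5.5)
The plan is to make the adjunction $\Ind^G_K \dashv \Res^K_G$ explicit at the level of commutative algebras and to identify the counit by checking the triangle identities. Since $K$ is proper and open, $\Ind^G_K B = (s^*B)^K$ is the generalised fixed point algebra. For $B = \Res^K_G C_0(W)$ this is $C_0(G_{K^0} \times_{K} W)$, where the quotient by the free proper action of $K$ is taken as in the definition of the fibre product. The statement identifies this with $C_0(G \times_K W)$.

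The first step is to check that $\mu \colon G \times_K W \to W$, $[g,w] \mapsto g.w$, is well defined, $G$-equivariant and a local homeomorphism. Well-definedness holds because $[gk,w] = [g,k.w]$ for $k \in K$ and $gk.w = g.(k.w)$. Equivariance is immediate from the left action $h.[g,w] = [hg,w]$. For the local homeomorphism property, take a small open bisection $V \ni g$ of $G$, small enough that its image in $G/K$ is injective; this is possible because $K$ is open and acts freely and properly. On the image of $V \times_{G^0} W$ in the quotient, the map $\mu$ is then $[g,w] \mapsto g.w$, which is a homeomorphism onto an open set since $V$ is a bisection. Hence $\mu$ defines a $G$-equivariant topological correspondence with identity left anchor, and so by Proposition~\ref{prop:G equivariant correspondence functor} a morphism $\epsilon' \colon C_0(G\times_K W) \to C_0(W)$ in $\KK^G$.

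The second step is to identify $\epsilon'$ with the counit. I would use the standard description of the adjunction from \cite[Theorem 2.3]{BP24} and \cite{MN10}, in which the counit is the composite of the $G$-equivariant Morita/Green imprimitivity correspondence with the multiplication map. Concretely, the counit is represented by the $G$-equivariant $\cs$-correspondence $\Ind^G_K\Res^K_G C_0(W) \to C_0(W)$ obtained by completing $C_c(G_{K^0}\times_{G^0}W)$, with right $C_0(W)$-valued inner product given by summing over the fibres of $\mu$. Equivalently, I would verify the universal property directly. Let $\eta$ be the unit, which is induced by the $K$-equivariant inclusion $W|_{K^0} \to G_{K^0}\times_K W$, $w \mapsto [\tau(w),w]$; this is an open and closed embedding because $K$ is open. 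It then suffices to check the triangle identities, namely that $\Res(\epsilon')\circ\eta_{\Res}$ is the identity and that $\epsilon'_{\Ind}\circ\Ind(\eta)$ is the identity. At the space level, the first reduces to $\mu([\tau(w),w]) = w$. The second reduces to the composite $G \times_K W \to G\times_K (G_{K^0}\times_K W) \to G\times_K W$ being the identity, which holds because $[g,[\tau(w),w]] \mapsto [g,w]$. Functoriality of the topological-correspondence construction under composition, as established in the Appendix, turns these space-level identities into identities in $\KK^G$ and $\KK^K$.

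The main obstacle is ensuring that the abstract adjunction of \cite{BP24} really uses this unit. Their induced algebra differs from the fixed point algebra by a Morita equivalence (Remark 2.1 there), and the counit is defined through that identification. If the unit were matched only up to some automorphism, then, since adjoints determine the counit uniquely from the unit, the identification would still go through once the unit is pinned down. So I would first check carefully that the unit in \cite{BP24}, transported along that Morita equivalence, is the clopen inclusion above, and then conclude by uniqueness of counits.
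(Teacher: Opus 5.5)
Your first step is fine: $\mu$ is well defined, $G$-equivariant and a local homeomorphism. Your uniqueness logic is also sound. Given the true unit component $\eta_{\Res^K_G C_0(W)}$, the counit $\epsilon_{C_0(W)}$ is the unique morphism $\phi$ with $\Res^K_G(\phi)\circ\eta_{\Res^K_G C_0(W)}=\mathrm{id}$. So your check $\mu([\tau(w),w])=w$ is exactly what is needed, and the second triangle identity, checked at one object, is superfluous.

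However, the proposal never establishes the one input this argument rests on. You need that the unit of the adjunction in \cite[Theorem 2.3]{BP24}, transported along the Morita equivalence of \cite[Remark 2.1]{BP24}, is your clopen inclusion $w\mapsto[\tau(w),w]$. You call this the \emph{main obstacle} and defer it, but it is the entire content of the lemma; everything else is routine. Your remark that a unit matched only up to an automorphism would still do is also wrong. A different unit determines a different counit, so you need the unit exactly.

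The paper closes the gap by reading off the counit, not the unit, from \cite[Theorem 2.3]{BP24}:
\begin{itemize}
\item The counit there is $X^G_{C_0(W)}\circ\kappa_{C_0(W)}$. Here $\kappa$ is induced by the open inclusion $G\overset{K}{\times}W\hookrightarrow G\overset{G}{\times}W$, and $X^G$ by the Morita equivalence $G\overset{G}{\times}W\sim W$ with bispace $G\times_{G^0}W$.
\item The paper precomposes with the groupoid Morita equivalence $G\times_K W\sim G\overset{K}{\times}W$ with bispace $G\times_{K^0}W$, which implements \cite[Remark 2.1]{BP24}.
\item It computes the composite bispace to be $G\times_K W$ with right anchor $[g,w]\mapsto g.w$, and Proposition~\ref{prop:G equivariant correspondence functor} finishes.
\end{itemize}

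Your ``concrete description'' of the counit gestures at this but conflates the two models. Completing the unquotiented $C_c(G\times_{K^0}W)$ gives a correspondence out of the crossed product $\cs(G\overset{K}{\times}W)$. Its inner product sums over the fibres of $(g,w)\mapsto g.w$ on $G\times_{K^0}W$, not over the fibres of $\mu$. To obtain the correspondence out of $C_0(G\times_K W)$ induced by $\mu$, you must compose with that Morita equivalence, and that composition is precisely the computation the paper carries out.
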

\setlength{\parindent}{0cm} \setlength{\parskip}{0cm}

\begin{proof}
Let us write $\widetilde \Ind^G_K \colon \KK^K \to \KK^G$ for the induction functor constructed in \cite{BP24}. On the $K$-$\cs$-algebra $\Res^K_G C_0(W)$, this is given by the crossed product $C_0(G \times_{K^0} W) \rtimes K$, where $K$ acts on $G \times_{K^0} W = \{ (g,w) \in G \times W \mid s(g) = \tau(w) \in K^0 \}$ via $(g,w) . k = (gk,k^{-1} . w)$. Let $G \overset{K}{\times} W$ denote the transformation groupoid of this action, so that $\widetilde \Ind^G_K \Res^K_G C_0(W) = \cs(G \overset{K}{\times} W)$. The $G$-equivariant Morita equivalence $\Ind^G_K \Res^K_G C_0(W) \to \widetilde \Ind^G_K \Res^K_G C_0(W)$ of $\cs$-algebras mentioned in \cite[Remark 2.1]{BP24} is induced by the $G$-equivariant Morita equivalence (of groupoids)
\[ G \times_K W \acts G \times_{K^0} W \rightacts G \overset{K}{\times} W \]
via Proposition \ref{prop:G equivariant correspondence functor}. 
The counit constructed in \cite[Theorem 2.3]{BP24} at $C_0(W)$ is the composition of a $G$-equivariant $*$-homomorphism $\kappa_{C_0(W)} \colon \widetilde \Ind^G_K \Res^K_G C_0(W) \to \widetilde \Ind^G_G \Res^G_G C_0(W)$ and a $G$-equivariant Morita equivalence $X^G_{C_0(W)} \colon \widetilde \Ind^G_G \Res^G_G C_0(W) \to C_0(W)$. The map $\kappa_{C_0(W)}$ is induced on the groupoid level by the canonical open inclusion $G \overset{K}{\times} W \hookrightarrow G \overset{G}{\times} W$, while $X^G_{C_0(W)}$ is induced by the $G$-equivariant Morita equivalence $G \overset{G}{\times} W \to W$ of groupoids with bispace $G \times_{G^0} W$. The composition of the $G$-equivariant étale correspondences
\[\begin{tikzcd}
	{G \times_K W} & {G \overset{K}{\times} W} & {G \overset{G}{\times}W} & W
	\arrow["{G \times_{K^0} W}", from=1-1, to=1-2]
	\arrow["{G \overset{G}{\times} W}", from=1-2, to=1-3]
	\arrow["{G \times_{G^0} W}", from=1-3, to=1-4]
\end{tikzcd}\]
has $G \times_K W$-$W$ bispace
\begin{align*}
(G \times_{K^0} W) \times_{G \overset{K}{\times} W} (G \overset{G}{\times} W) \times_{G \overset{G}{\times}W} (G \times_{G^0} W) & \cong (G \times_{K^0} W) \times_{G \overset{K}{\times} W} (G \times_{G^0} W) \\
& \cong G \times_K W,
\end{align*}
where the final $G \times_K W$ is the $G$-equivariant topological correspondence with left anchor map the identity and right anchor map $[g,w] \mapsto g . w$. It follows by Proposition \ref{prop:G equivariant correspondence functor} that our counit $\epsilon_{C_0(W)}$ is induced by this $G$-equivariant local homeomorphism.
\end{proof}
\setlength{\parindent}{0cm} \setlength{\parskip}{0.5cm}

\blemma\label{we actually get a chain map}
Let $S$ be a countable inverse semigroup and let $Z \colon X \to Y$ be a proper $S$-equivariant topological correspondence with $X$, $Y$ and $Z$ second countable and $G = S \ltimes X$ and $H = S \ltimes Y$ Hausdorff.
Consider the projective resolutions $P_\bullet \to C_0(X)$, $Q_\bullet \to C_0(Y)$ as in \eqref{eq:projective resolution}. Then the collection of $G$-equivariant topological correspondences $Z(F_n,\dotsc,F_0) \colon X(F_n,\dotsc,F_0) \to Y(F_n,\dotsc,F_0)$ described in \eqref{eq: chain map correspondences} induce a chain map $P_\bullet \to \Ind_{Z(S)} Q_\bullet$ over $f_{Z(S)} \colon C_0(X) \to \Ind_{Z(S)} C_0(Y)$.
\elemma
\setlength{\parindent}{0cm} \setlength{\parskip}{0cm}

\bproof
It suffices to show that the maps in $\KK^G$ induced by the $G$-equivariant topological correspondences $Z(F_n,\dotsc,F_0) \colon X(F_n,\dotsc,F_0) \to Z \times_Y Y(F_n,\dotsc,F_0)$ are compatible with the face maps appearing in the projective resolutions. Setting $P_{-1} = C_0(X)$, then for $n \geq 0$, the map $\delta_n \colon P_n \to P_{n-1}$ is given by $\sum_{i=0}^n (-I_G R_G)^i \epsilon_{(I_G R_G)^{n-i} C_0(X)}$. The face map $(I_G R_G)^{n-i} \epsilon_{(I_G R_G)^{i} C_0(X)}$ maps the $(F_n,\dotsc,F_0)$-component of $P_n$ into the $(F_n,\dotsc,\widehat F_{i}, \dotsc, F_0)$-component of $P_{n-1}$ (and maps the $F_0$-component of $P_0$ to $P_{-1}$). By Lemma \ref{counit model} this is induced by the $G$-equivariant local homeomorphism
\begin{equation*}
\epsilon \colon [s,x] \mapsto s . x \colon X(F_0) \to X
\end{equation*}
for $n = 0$ and the $G$-equivariant local homeomorphism
\begin{align*}
\epsilon_{n,i} \colon X(F_n,\dotsc,F_0) & \to X(F_n,\dotsc,\widehat F_{i}, \dotsc, F_0) \\
[s_n,\dotsc,s_0,x] & \mapsto    \begin{cases}  
                                [s_n,\dotsc,s_1,s_0 . x] & i = 0 \\
                                [s_n,\dotsc, s_{i} s_{i-1},\dotsc, s_0,x] & 0 < i \leq n  
                                \end{cases}
\end{align*}
for $n > 0$. The analogous summand of $\Ind_{Z(S)} \delta_n \colon \Ind_{Z(S)} Q_n \to \Ind_{Z(S)} Q_{n-1}$ (with $Q_{-1} = C_0(Y)$) is induced by the $G$-equivariant local homeomorphism
\begin{equation*}
\epsilon' \colon (z,[s,y]) \mapsto z \colon Z \times_Y Y(F_0) \to Z 
\end{equation*}
for $n = 0$ and the $G$-equivariant local homeomorphism
\begin{align*}
\epsilon'_{n,i} \colon Z \times_Y Y(F_n,\dotsc,F_0) & \to Z \times_Y Y(F_n,\dotsc,\widehat F_{i}, \dotsc, F_0) \\
(z,[s_n,\dotsc,s_0,y]) & \mapsto \begin{cases}  
                                    (z,[s_n,\dotsc,s_1,s_0 . y]) & i = 0 \\
                                    (z,[s_n,\dotsc, s_{i} s_{i-1},\dotsc, s_0,y]) & 0 < i \leq n 
                                    \end{cases}
\end{align*}
for $n > 0$. We have broken our maps down into $G$-equivariant topological correspondences; by Proposition \ref{prop:G equivariant correspondence functor} it suffices to prove that the following diagrams commute up to isomorphism of $G$-equivariant topological correspondences:
\begin{equation}\label{chain map diagram one}
\begin{tikzcd}
	{X(F_0)} & X \\
	{Z \times_Y Y(F_0)} & {Z}
	\arrow["\epsilon"', from=1-1, to=1-2]
	\arrow["{Z(F_0)}", from=1-1, to=2-1]
	\arrow["{Z}", from=1-2, to=2-2]
	\arrow["{\epsilon'}"', from=2-1, to=2-2]
\end{tikzcd}
\end{equation}
\begin{equation}\label{chain map diagram two}
\begin{tikzcd}
	{X(F_n,\dots,F_0)} & {X(F_n,\dots,\widehat F_{i},\dots,F_0)} \\
	{Z \times_Y Y(F_n,\dots,F_0)} & {Z \times_Y Y(F_n,\dots,\widehat F_{i},\dots,F_0)}
	\arrow["{\epsilon_{n,i}}"', from=1-1, to=1-2]
	\arrow["{Z(F_n,\dots,F_0)}", from=1-1, to=2-1]
	\arrow["{Z(F_n,\dots,\widehat F_{i},\dots,F_0)}", from=1-2, to=2-2]
	\arrow["{\epsilon'_{n,i}}"', from=2-1, to=2-2]
\end{tikzcd}
\end{equation}
Here we understand $Z \colon X \to Z$ as the $G$-equivariant topological correspondence with right anchor the identity and left anchor $\rho \colon Z \to X$, which induces $f_{Z(S)} \colon C_0(X) \to \Ind_\Omega C_0(Y)$ in $\KK^G$. For Diagram \eqref{chain map diagram one}, the clockwise composition is the $G$-equivariant topological correspondence 
\[\begin{tikzcd}[row sep = 0.2em]
	{X(F_0)} & {X(F_0) \times_{\epsilon,X,\rho} Z} & {Z} \\
	{[s_0,x]} & {([s_0,x], z)} & {z}
	\arrow[from=1-2, to=1-1]
	\arrow[from=1-2, to=1-3]
	\arrow[maps to, from=2-2, to=2-1]
	\arrow[maps to, from=2-2, to=2-3]
\end{tikzcd}\]
with the diagonal action $g . ([s_0,x],z) = (g . [s_0,x], g . z)$.
The anticlockwise composition is 
\[\begin{tikzcd}[row sep = 0.2em]
	{X(F_0)} & {Z(F_0)} & {Z} \\
	{[s_0,\rho(z)]} & {[s_0,z]} & {s_0 . z,}
	\arrow[from=1-2, to=1-1]
	\arrow[from=1-2, to=1-3]
	\arrow[maps to, from=2-2, to=2-1]
	\arrow[maps to, from=2-2, to=2-3]
\end{tikzcd}\]
with the usual $G$-action on $Z(F_0)$.
These compositions are isomorphic via the $G$-equivariant homeomorphism
\begin{align*}
Z(F_0) & \to X(F_0) \times_{\epsilon,X,\rho} Z  \\
[s_0,z] & \mapsto ([s_0,\rho(z)], s_0 . z).
\end{align*}
For Diagram \ref{chain map diagram two}, the clockwise composition has right anchor
\begin{align*}
    & X(F_n, \dotsc, F_0) \times_{X(F_n, \dotsc, \widehat F_i, \dotsc, F_0)} Z(F_n, \dotsc, \widehat F_i, \dotsc, F_0)\\
    \to \ & Z \times_Y Y(F_n,\dots,\widehat F_{i},\dots,F_0)\\
        & \begin{cases} ([s_n, \dotsc, s_0, x], [s_n,\dotsc, s_1,  z] ) & i = 0 \\
                    ([s_n, \dotsc, s_0, x], [s_n, \dotsc, s_{i+1}, s_i s_{i-1}, \dotsc, s_0, z]) & 0 < i \leq n \end{cases}\\
    \ma \ & \begin{cases}   (s_n \cdots s_1 . z,[s_n, s_{n-1}, \dots, s_1, \sigma(z)]] & i = 0  \\
                            (s_n \cdots s_0 . z, [s_n, s_{n-1},\dots, s_{i+1},s_i s_{i-1}, \dots, s_0, \sigma(z)]) & 0 < i \leq n 
                            \end{cases}
\end{align*}
with left anchor the left projection map, and $G$-action the diagonal one. The anticlockwise composition has underlying space $Z(F_n,\dots,F_0)$. The left anchor is inherited from $Z(F_n,\dots,F_0) \colon X(F_n,\dots,F_0) \to Y(F_n,\dots,F_0)$, and the right anchor is given by
\begin{align*}
    & Z(F_n, \dotsc, F_0) \\
    \to \ & Z \times_Y Y(F_n,\dots,\widehat F_{i},\dots,F_0)\\
    &  [s_n, \dotsc, s_0, z]  \\
    \ma \ & \begin{cases}   (s_n \cdots s_0 . z,[s_n, s_{n-1},\dots, s_1, s_0 . \sigma(z)]) & i = 0  \\
                            (s_n \cdots s_0 . z,[s_n,s_{n-1},\dots,s_{i+1},s_i s_{i-1}, \dots, s_0, \sigma(z)]) & 0 < i \leq n. 
                            \end{cases}
\end{align*}
It is then straightforward to verify that the following is an isomorphism of $G$-equivariant correspondences, noting that $\sigma(s_0 . z) = s_0 . \sigma(z)$.
\begin{align*}
    & Z(F_n, \dots, F_0)\\
    \to \ & X(F_n, \dotsc, F_0) \times_{X(F_n, \dotsc, \widehat F_i, \dotsc, F_0)} Z(F_n, \dotsc, \widehat F_i, \dotsc, F_0) \\
          & [s_n, \dotsc, s_0, z] \\
    \ma \ & \begin{cases} ([s_n, \dotsc, s_0, \rho(z)], [s_n, \dots, s_1, s_0 . z]) & i = 0 \\
                    ([s_n, \dotsc, s_0, \rho(z)], [s_n, \dotsc, s_{i+1}, s_i s_{i-1}, \dotsc, s_0, z]) & 0 < i \leq n  \end{cases} \qedhere
\end{align*}
\eproof
\setlength{\parindent}{0cm} \setlength{\parskip}{0.5cm}

The induction correspondence $Z(S) \ltimes Q_n \colon G \ltimes \Ind_{Z(S)} Q_n \to H \ltimes Q_n$ is given at the groupoid level by the étale correspondences (summing over $(F_n,\dotsc,F_0)$)
\begin{equation}\label{eq:correspondence A}
(Z \times_Y Y(F_n,\dotsc,F_0))(S) \colon S \ltimes (Z \times_Y Y(F_n,\dotsc,F_0)) \to S \ltimes Y(F_n,\dotsc,F_0) 
\end{equation}
induced by the $S$-equivariant local homeomorphisms $Z \times_Y Y(F_n,\dotsc,F_0) \to Y(F_n,\dotsc,F_0)$ as described in \eqref{crossed product model}. 
Moreover, by Proposition \ref{crossed product correspondence identification} the crossed product of the $G$-equivariant chain map $P_\bullet \to \Ind_{Z(S)} Q_\bullet$ from Lemma \ref{we actually get a chain map} is modelled at the groupoid level by the étale correspondences 
\begin{equation}\label{eq:correspondence B}
G \ltimes {Z}(F_n,\dots,F_0) \colon G \ltimes X(F_n,\dots,F_0) \to G \ltimes (Z \times_Y Y(F_n,\dots,F_0)) 
\end{equation}
induced by the $G$-equivariant topological correspondences $Z(F_n,\dots,F_0) \colon X(F_n,\dots,F_0) \to Z \times_Y Y(F_n,\dots,F_0)$.

\begin{lemma}\label{lem:G and S equivariant correspondences}
Let $G = S \ltimes W$ be an \'etale groupoid arising from the action of an inverse semigroup $S$ on a locally compact Hausdorff space $W$. Then any $G$-equivariant topological correspondence $Z \colon X \to Y$ is an $S$-equivariant topological correspondence, with the actions of $S$ on $X$, $Y$ and $Z$ induced by the actions of $G$. Moreover, the induced \'etale correspondence $Z(S) \colon S \ltimes X \to S \ltimes Y$ is isomorphic to $G \ltimes Z \colon G \ltimes X \to G \ltimes Y$.
\end{lemma}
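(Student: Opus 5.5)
The plan is to construct the $S$-actions explicitly from the $G$-actions, check the axioms of an $S$-equivariant topological correspondence, and then write down an explicit isomorphism of bispaces. For a $G$-space $V$ with anchor $\tau \colon V \to W = G^0$, and $s \in S$, I set
\[ \dom_V(s) \coloneq \tau^{-1}(\dom_W(s)), \qquad s . v \coloneq [s,\tau(v)] . v . \]
This is a partial homeomorphism $\dom_V(s) \to \tau^{-1}(\ran_W(s))$ whose inverse is given by $s^*$. Since $[t,s.x][s,x] = [ts,x]$, we get $t.(s.v) = (ts).v$ on the correct domain; moreover idempotents act as identities on their domains, so this is an action. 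Applying this to $X$, $Y$ and $Z$: the anchors $\rho$ and $\sigma$ are $G$-equivariant, so they are $S$-equivariant. The condition $\rho^{-1}(\dom_X(s)) = \dom_Z(s)$ holds because the anchor of $Z$ is $\tau_X \circ \rho$, hence $\dom_Z(s) = (\tau_X\circ\rho)^{-1}(\dom_W(s)) = \rho^{-1}(\dom_X(s))$.

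Next I identify the groupoids. The map $S \ltimes V \to G \ltimes V$, $[s,v] \mapsto ([s,\tau(v)], v)$, is well defined: if $s \le s_1,s_2$ with $\tau(v) \in \dom_W(s)$, then $[s_1,\tau(v)] = [s_2,\tau(v)]$. It is surjective because every $g \in G$ has the form $[s,w]$. It is injective because $[s_1,\tau(v)] = [s_2,\tau(v)]$ in $G$ gives a common lower bound whose domain contains $\tau(v)$, hence $v$. It is multiplicative and a homeomorphism on basic bisections $[s, \dom_V(s)]$. So $S \ltimes X \cong G \ltimes X$, $S\ltimes Y \cong G \ltimes Y$, and $S \ltimes Z \cong G\ltimes Z$.

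Finally I compare the bispaces. By definition
\[ Z(S) = Z \times_Y (S \ltimes Y) \cong \{ (z,(h,y)) \mid \sigma(z) = h . y \} \subseteq Z \times (G \ltimes Y), \]
while $G \ltimes Z = \{(g,z)\}$ has anchors $(g,z) \mapsto g.\rho(z)$ and $(g,z) \mapsto \sigma(z)$. I will define
\[ \Theta \colon G \ltimes Z \to Z(S), \qquad (g,z) \mapsto \bigl(g . z, (g, \sigma(z))\bigr), \]
with inverse $(z',(h,y)) \mapsto (h, h^{-1}. z')$. The required compatibility $\sigma(g.z) = g.\sigma(z)$ holds by equivariance.

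I then check that $\Theta$ intertwines the anchors and both actions. For the left action, $(g_1,g.\rho(z))\cdot(g,z) = (g_1g, z)$ on one side, and $g_1 . (g.z,(g,\sigma(z))) = (g_1g.z,(g_1 g,\sigma(z)))$ on the other; these match, using the translation $[s,x].(z,[t,y]) = (s.z,[st,y])$ under the identifications above. The right action is analogous. Continuity of $\Theta$ and of its inverse follows from continuity of the actions.

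The only real obstacle is bookkeeping. Under the isomorphism $S \ltimes Y \cong G \ltimes Y$, an element $[t,y]$ must be matched with $([t,\tau(y)],y)$, and the defining formulas for $Z(S)$ must be converted consistently; in particular $g|_z = [s,\sigma(z)]$ must be identified with the action of $g$ transported to $Y$. I expect this to be routine once these identifications are fixed.
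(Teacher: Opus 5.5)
Your proposal is correct. The paper states this lemma without proof and treats it as routine, and your argument carries out the natural verification. The three steps are:
\begin{itemize}
\item the induced action $s.v = [s,\tau(v)].v$ on $\dom_V(s)=\tau^{-1}(\dom_W(s))$;
\item the identification $[s,v]\mapsto([s,\tau(v)],v)$ of $S\ltimes V$ with $G\ltimes V$;
\item the bispace isomorphism $\Theta(g,z)=(g.z,(g,\sigma(z)))$.
\end{itemize}

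In the paper's notation, $\Theta(g,z)$ is $(g.z,g|_z)$, that is, $g$ acting on $(z,\sigma(z))\in Z(S)$. This makes compatibility with the left anchor and left action essentially tautological.

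The right-action check you deferred also goes through. For $y'=t.y''$ one has $[st,\tau(y'')]=[s,\tau(y')][t,\tau(y'')]$. Hence the right action on $Z(S)$ becomes $(z',(h,y')).(k,y'')=(z',(hk,y''))$. This gives
\[
\Theta(g,z).(g_2,y)=(g.z,(gg_2,y))=\Theta(gg_2,g_2^{-1}.z),
\]
as required.
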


Altogether, the composition of the above correspondences is induced by the $S$-equivariant proper topological correspondence 
\begin{equation}\label{labelled correspondence 1}
    Z(F_n, \dotsc, F_0)  \colon X(F_n, \dotsc, F_0) \to Y(F_n, \dotsc, F_0)
\end{equation}
with anchor maps $\rho^{(n)}$ and $\sigma^{(n)}$ as given in \eqref{anchor map definitions}:

\blemma\label{chain map composed model}
Let $S$ be an inverse semigroup and let $Z \colon X \to Y$ be an $S$-equivariant topological correspondence. Let $e_0, \dots, e_n \in E(S)$ be idempotents and let $F_i \subseteq S_{e_i}$ be finite subgroups. Then the composition of the \'etale correspondences   \eqref{eq:correspondence A} and \eqref{eq:correspondence B}
\[
\scalebox{0.9}{
$
   G \ltimes X(F_n, \dotsc, F_0) \xrightarrow{G \ltimes Z(F_n, \dotsc, F_0)} {G \ltimes Z \times_Y Y(F_n, \dotsc, F_0)} \xrightarrow{(Z \times_Y Y(F_n, \dotsc, F_0))(S)} H \ltimes Y(F_n, \dotsc, F_0)
$
}
\]
is isomorphic to the étale correspondence induced by the $S$-equivariant topological correspondence \eqref{labelled correspondence 1}
\[ Z(F_n, \dotsc, F_0)  \colon X(F_n, \dotsc, F_0) \to Y(F_n, \dotsc, F_0). \]
\elemma
\setlength{\parindent}{0cm} \setlength{\parskip}{0cm}

\begin{proof}
The \'etale correspondence \[G \ltimes Z(F_n,\dots,F_0) \colon G \ltimes X(F_n,\dots,F_0) \to G \ltimes Z \times_Y Y(F_n,\dots,F_0)\] is induced by the $S$-equivariant topological correspondence $Z(F_n,\dots,F_0) \colon X(F_n,\dots,F_0) \to Z \times_Y Y(F_n,\dots,F_0)$ by Lemma \ref{lem:G and S equivariant correspondences}. This composes with the $S$-equivariant local homeomorphism $Z \times_Y Y(F_n,\dots,F_0) \to Y(F_n,\dots,F_0)$ to give the claimed $S$-equivariant topological correspondence.
\end{proof}
\setlength{\parindent}{0cm} \setlength{\parskip}{0.5cm}

We shall now give a simpler description of the étale correspondence $G \ltimes X(F_n, \dotsc, F_0) \to H \ltimes Y(F_n,\dotsc,F_0)$ given in \eqref{labelled correspondence 1} up to Morita equivalence. Set $X_0 = \dom_X (e_0)$, $Y_0 = \dom_Y (e_0)$, $Z_0 = \dom_Z (e_0)$, and for $n \geq 1$, set
\begin{align}\label{definition of X_n Y_n Omega_n}
    X_n & \coloneq {}_{G_n^0} G \times_{G_{n-1}} \cdots \times_{G_1} G \times_{G_0} X, \nonumber \\
    Y_n & \coloneq {}_{H_n^0} H \times_{H_{n-1}} \cdots \times_{H_1} H \times_{H_0} Y,\\
    Z_n & \coloneq {}_{G_n^{0}} G \times_{G_{n-1}} \cdots \times_{G_1} G \times_{G_0} Z. \nonumber   
\end{align}
For each $n \geq 0$, $G_n$ acts on the left of $X_n$ and $Z_n$ by left multiplication on the leftmost factor, and $H_n$ acts similarly on the left of $Y_n$. Under the identifications $G_n = F_{n,\down} \ltimes X$ and $H_n = F_{n,\down} \ltimes Y$ this endows $X_n$, $Y_n$ and $Z_n$ with actions of the inverse semigroup $F_{n,\down}$. For $n \geq 1$ set 
\begin{align}\label{definition of rho_n sigma_n}
\rho_n \colon Z_n & \to X_n \nonumber \\
[s_{n-1},\dots,s_0,z] & \mapsto [s_{n-1}, \dots, s_0,\rho(z)]\\
\sigma_n \colon Z_n & \to Y_n \nonumber \\
[s_{n-1},\dots,s_0,z] & \mapsto [s_{n-1}, \dotsc, s_0, \sigma(z)] \nonumber
\end{align}
and set $\rho_0 \colon Z_0 \to X_0$ and $\sigma_0 \colon Z_0 \to Y_0$ to be the restrictions of $\rho$ and $\sigma$. These define for each $n \geq 0$ an $F_{n,\down}$-equivariant topological correspondence $Z_n \colon X_n \to Y_n$.  We obtain an \'etale correspondence 
\begin{equation}\label{labelled correspondence 2}
Z_n(F_{n,\down}) \colon F_{n,\down} \ltimes X_n \to F_{n,\down} \ltimes Y_n.
\end{equation}
By construction, $X(F_n,\dotsc,F_0) = G \times_{G_n} X_n$ and $Y(F_n,\dotsc,F_0) = H \times_{H_n} Y_n$, so we obtain Morita equivalences $G \times_X X_n \colon G \ltimes X(F_n,\dotsc,F_0) \to G_n \ltimes X_n = F_{n,\down} \ltimes X_n$ and $H \times_Y Y_n \colon H \ltimes Y(F_n,\dotsc,F_0) \to H_n \ltimes Y_n = F_{n,\down} \ltimes Y_n$. 

For $n \geq 1$ we set
\begin{align*}
p_n \colon Z_n & \to Z \\
[s_{n-1},\dots,s_0,z] & \mapsto s_{n-1} \dotsm s_0.z
\end{align*}
and we set $p_0 \colon Z_0 \to Z$ to be the inclusion.

\blemma\label{Morita identification lemma}
Let $S$ be an inverse semigroup and let $Z \colon X \to Y$ be a proper $S$-equivariant topological correspondence. Let $n \geq 0$, let $e_0, \dots, e_n \in E(S)$ be idempotents and let $F_i \subseteq S_{e_i}$ be finite subgroups for $0 \leq i \leq n$. Set $G = S \ltimes X$ and $H = S \ltimes Y$.
Then the diagram of étale correspondences below commutes up to isomorphism
\[\begin{tikzcd}[column sep = 7cm]
	{S \ltimes X(F_n,\dotsc,F_0)} & {S \ltimes Y(F_n,\dotsc,F_0)} \\
	{F_{n,\down} \ltimes X_n} & {F_{n,\down} \ltimes Y_n,}
	\arrow["{Z(F_n, \dotsc, F_0)(S)}", from=1-1, to=1-2]
	\arrow["{G \times_X X_n}"', from=1-1, to=2-1]
	\arrow["{H \times_Y Y_n}", from=1-2, to=2-2]
	\arrow["{Z_n(F_{n,\down})}", from=2-1, to=2-2]
\end{tikzcd}\]
where the objects are described in \eqref{definition of X(F_n...F_0) Y(F_n...F_0)} and \eqref{definition of X_n Y_n Omega_n} and the horizontal maps are described at \eqref{labelled correspondence 1} and \eqref{labelled correspondence 2}. The vertical maps are Morita equivalences.
\elemma
\setlength{\parindent}{0cm} \setlength{\parskip}{0cm}

\bproof
We set $G_i = G(F_i)$ and $H_i = H(F_i)$ as in \eqref{definition of X(F_n...F_0) Y(F_n...F_0)}.
Under the identifications $X(F_n,\dots,F_0) = G \times_{G_n} X_n$, $Y(F_n,\dots,F_0) = H \times_{H_n} Y_n$ and $Z(F_n,\dots,F_0) = G \times_{G_n} Z_n$, the maps $\rho^{(n)} \colon Z(F_n,\dots,F_0) \to X(F_n,\dots,F_0)$ and $\sigma^{(n)} \colon Z(F_n,\dots,F_0) \to Y(F_n,\dots,F_0)$ are given in terms of $\rho_n$  and $\sigma_n$ (see \eqref{definition of rho_n sigma_n}) by $\rho^{(n)}([g,z_n]) = [g,\rho_n(z_n)]$ and  $\sigma^{(n)}([g,z_n]) = [g|_{p_n(z_n)},\sigma_n(z_n)]$. Moreover, $p^{(n)} \colon Z(F_n,\dots,F_0) \to Z$ is given by $[g,z_n] \mapsto g . p_n(z_n)$.
\setlength{\parindent}{0cm} \setlength{\parskip}{0.5cm}

The clockwise composition is isomorphic to $Z(F_n,\dots,F_0) \times_{Y(F_n,\dots,F_0)} (H \times_Y H_n)$, with right action given by right multiplication on the $H \times_Y Y_n$ factor, and the left action given by $(g,x^{(n)}) . (z^{(n)}, (h,y_n)) = (g . z^{(n)}, (g|_{p^{(n)}(z^{(n)})}h,y_n))$. Under the above identifications, the bispace becomes $(G \times_{G_n} Z_n) \times_{H \times_{H_n} Y_n} (H \times_Y Y_n)$, with the left action of $G \ltimes (G \times_{G_n} X_n)$ given by $(g,[g',x_n]) . ([g',z_n],(h,y_n)) = ([gg',z_n],(g|_{g'.p_n(z_n)}h,y_n))$.

The anticlockwise composition is  $(G \times_X X_n) \times_{F_{n,\down} \ltimes X_n} Z_n(F_{n,\down})$. Consider the map 
\begin{align*}
(G \times_X X_n) \times_{F_{n,\down} \ltimes X_n} Z_n(F_{n,\down}) & \mapsto (G \times_{G_n} Z_n) \times_{H \times_{H_n} Y_n} (H \times_Y Y_n) \\
[(g,x_n),(z_n,h_n)] & \mapsto ([g,z_n], (g|_{p_n(z_n)} h_n ,\sigma_n(z_n . h_n))).
\end{align*}
This has inverse 
\begin{align*}
(G \times_{G_n} Z_n) \times_{H \times_{H_n} Y_n} (H \times_Y Y_n) & \mapsto (G \times_X X_n) \times_{F_{n,\down} \ltimes X_n} Z_n(F_{n,\down})  \\
([g,z_n],(h,y_n)) & \mapsto [(g,\rho_n(z_n)),(z_n, g|_{p_n(z_n)}^{-1} h)].
\end{align*}
We note that $g|_{p_n(z_n)}^{-1} h$ is indeed an element of $H_n = F_{n,\down} \ltimes Y$ because via the fibre product structure we assume the equality $[g|_{p_n(z_n)},\sigma_n(z_n)] = [h,y_n] \in H \times_{H_n} Y_n$.
It is routine to check that the above map describes an isomorphism of bispaces, completing the proof of the lemma.
\eproof
\setlength{\parindent}{0cm} \setlength{\parskip}{0.5cm}

Let us summarize our findings.

\bcor
\label{cor:Ln=Iso?}
Let $S$ be a countable inverse semigroup with idempotent semilattice $E$ and let $Z \colon X \to Y$ be a proper $S$-equivariant topological correspondence with $X$, $Y$ and $Z$ second countable and $G = S \ltimes X$, $H = S \ltimes Y$ Hausdorff. Suppose that for any choice of non-zero idempotents $e_0,\dots,e_n \in E$ and finite subgroups $F_i \subseteq S_{e_i}$, the \'etale correspondence 
\begin{equation}
Z_n(F_{n,\down}) \colon F_{n,\down} \ltimes X_n \to F_{n,\down} \ltimes Y_n
\end{equation}
described in \eqref{labelled correspondence 2} induces a $\K_*$-isomorphism. Then $\Ktop_*(Z(S)) \colon \Ktop_*(G) \to \Ktop_*(H)$ is an isomorphism.
\ecor
\setlength{\parindent}{0cm} \setlength{\parskip}{0cm}

\begin{proof}
We apply Proposition \ref{isomorphism proposition before concrete models}, so consider the projective resolutions $P_\bullet \to C_0(X)$, $Q_\bullet \to C_0(Y)$ as in \eqref{eq:projective resolution}. Lemma \ref{we actually get a chain map} tells us that after taking a direct sum over all $(F_n,\dots,F_0)$ at each $n \geq 0$, the $G$-equivariant topological correspondences $Z(F_n,\dots,F_0) \colon X(F_n,\dots,F_0) \to Z \times_Y Y(F_n,\dots,F_0)$ described in \eqref{eq: chain map correspondences} induce a chain map $\tilde f \colon P_\bullet \to \Ind_{Z(S)} Q_\bullet$ over $f_{Z(S)} \colon C_0(X) \to \Ind_{Z(S)} C_0(Y)$. By Proposition \ref{crossed product correspondence identification}, $G \ltimes \tilde f \colon G \ltimes P_n \to G \ltimes \Ind_{Z(S)} Q_n$ is modelled at the groupoid level by the \'etale correspondences $G \ltimes Z(F_n,\dots,F_0) \colon G \ltimes X(F_n,\dots,F_0) \to G \ltimes (Z \times_Y Y(F_n,\dots,F_0))$. Moreover, the induction correspondence ${Z(S)} \ltimes Q_n \colon G \ltimes \Ind_{Z(S)} Q_n \to H \ltimes Q_n$ is modelled by the $S$-equivariant local homeomorphism $[z,y] \mapsto y \colon Z \times_Y Y(F_n,\dotsc,F_0) \to Y(F_n,\dotsc,F_0)$ described in \eqref{crossed product model}. By Lemma \ref{chain map composed model} the composition $G \ltimes P_n \to H \ltimes Q_n$ is modelled by the $S$-equivariant topological correspondence $Z(F_n, \dotsc, F_0) \colon X(F_n, \dotsc, F_0) \to Y(F_n, \dotsc, F_0)$ described in \eqref{labelled correspondence 1}. Finally, by Lemma \ref{Morita identification lemma} the resulting map in K-theory \eqref{chain map crucial} $\K_*(G \ltimes P_n) \to \K_*(H \ltimes Q_n)$ is isomorphic to the map in K-theory induced by the $F_{n,\down}$-equivariant topological correspondence $Z_n \colon X_n \to Y_n$ described in \eqref{labelled correspondence 2}. If this is an isomorphism for each $(F_0,\dots,F_n)$, Proposition \ref{isomorphism proposition before concrete models} implies that $\Ktop_*({Z(S)}) \colon \Ktop_*(G) \to \Ktop_*(H)$ is an isomorphism.
\end{proof}
\setlength{\parindent}{0cm} \setlength{\parskip}{0.5cm}

\subsection{Isomorphism at the level of ABC spectral sequences}\label{isomorphism subsection}

Let us for the rest of this subsection fix a countable locally finite weak semilattice $L$ with an action of a countable inverse semigroup $S$ with idempotent semilattice $E$ such that $S \ltimes \widehat L$ is Hausdorff. Let $G = S \ltimes L\reg$ and $H = S \ltimes \widehat L$ with $X = L\reg$ and $Y = \widehat L$, and let $\Omega = \Omega^0(S) \colon G \to H$ be the discretisation correspondence. Let $e_0, \dots, e_n \in E$ be idempotents and let $F_i \subseteq S_{e_i}$ be finite subgroups. In this setting, we constructed an $F_{n,\down}$-equivariant topological correspondence (see \eqref{definition of X_n Y_n Omega_n} and \eqref{labelled correspondence 2}) 
\[ Z_n \colon X_n \to Y_n. \]
Our goal now is to prove the following.
\btheo
\label{thm:Omega=KIso}
The proper étale correspondence $Z_n (F_{n,\down}) \colon F_{n,\down} \ltimes X_n \to F_{n,\down} \ltimes Y_n$ induces a $\K_*$-isomorphism. 
\etheo

For the proof, let us first analyse the space
\begin{equation*}
    \Y_n \coloneq { }_{H_n^{0}} H \times_{H_{n-1}^{0}} \dotso \times_{H_1^{0}} H \times_{H_0^{0}} H^{0}.
\end{equation*}
Consider 
\begin{equation*}
    \crL_n \coloneq \menge{(s_{n-1}, \dotsc, s_0, \widehat{L}(k)) \subseteq \Y_n}{ s_m \in e_{m+1} S e_m \text{ for $0 \leq m \leq n-1$ },  k \in \dom_{L} (s_{n-1} \cdots s_0)},
\end{equation*}
where
\begin{equation*}
    (s_{n-1}, \dotsc, s_0, \widehat{L}(k)) \coloneq \menge{[s_{n-1}, \dotsc, s_0, \chi] \in \Y_n}{\chi \in \widehat{L}(k)}.
\end{equation*}
We note that by construction each member $(s_{n-1}, \dotsc, s_0, \widehat{L}(k)) \in \crL_n$ is compact and open, and homeomorphic to its projection $\widehat{L}(k)$ onto $H^0$. 

\blemma
\label{lem:crLLFWS}
The family $\crL_n$ is a generating independent locally finite weak semilattice in $\Y_n$ in the sense of Definition \ref{def:GenIndLFWS}.

In particular, we have an isomorphism $\cs(\crL_n) \cong C_0(\Y_n), \, e \ma 1_e$, which induces the identification $\Y_n \cong \widehat{\crL_n}$.
\elemma
\setlength{\parindent}{0cm} \setlength{\parskip}{0cm}

\bproof
To see that $\crL_n$ is a weak semilattice, take $e_1, e_2 \in \crL_n$. Then $e_1 \cap e_2$ is a compact subset of $\Y_n$ (this uses Hausdorffness). Moreover, it is easy to see that every point of $\Y_n$ has a neighbourhood given by an element of $\crL_n$ which is completely contained in $e_1 \cap e_2$. It follows that $e_1 \cap e_2$ can be written as a finite union $e_1 \cap e_2 = \bigcup_i f_i$ for some $f_i \in \crL_n$. Now take $d \in \crL_n$ with $d = (s_{n-1}, \dotsc, s_0, \widehat{L}(k))$ such that $d \subseteq e_1 \cap e_2 = \bigcup_i f_i$. As $[s_{n-1}, \dotsc, s_0, \chi_k]$ lies in $d$, this element has to lie in some $f_i$. Suppose that $f_i = (t_{n-1}, \dotsc, t_0, \widehat{L}(l))$. Then $\chi_k(l) = 1$, so that $k \leq l$ and $\widehat{L}(k) \subseteq \widehat{L}(l)$. Moreover, $[s_{n-1}, \dotsc, s_0, \chi_k] = [t_{n-1}, \dotsc, t_0, \chi_k]$ implies that there are idempotents $e_m \in E^\times$ with $s_m e_m = t_m e_m$ and $s_{m-1} \dotsm s_0.k = t_{m-1} \dotsm t_0.k \in \dom_{L^\times} (e_m)$ for all $0 \leq m \leq n-1$. It follows that $[s_{n-1}, \dotsc, s_0, \chi] = [t_{n-1}, \dotsc, t_0, \chi]$ for all $\chi \in \widehat{L}(k)$, i.e., $d \subseteq f_i$. 
\setlength{\parindent}{0cm} \setlength{\parskip}{0.5cm}

To see that $\crL_n$ is locally finite, take a finite subset $F \subseteq \crL_n$. Proceeding inductively on $\# F$, we want to show that there exists a $\down$-closed finite subset $\widebar{F}$ of $\crL_n$ containing $F$. So suppose we have found such a $\widebar{F}$ for $F$. Now take an additional element $f \in \crL_n$, say $f = (s_{n-1}, \dotsc, s_0, \widehat{L}(k))$. The down-set $\{ g \in \crL_n \mid g \leq f \}$ carries the weak semilattice structure of $\{ l \in L \mid l \leq k \}$. In particular, since $L$ is locally finite, we may a finite subset $A \subseteq \{ g \in \crL_n \mid g \leq f \}$ which is $\down$-closed and contains the finite set $\{ f \} \cup (\widebar{F} \down f)$. Now we claim that $\widebar{F} \cup A$ is $\down$-closed. Indeed, given $e \in \widebar{F}$ and $g \in A$, we have $e \down g = e \down (f \down g) = (e \down f) \down g$, which lies in $A$ because $e \down f \subseteq A$ and $A$ is $\down$-closed.

The family $\crL_n$ satisfies (GEN) because for each fixed $(s_{n-1},\dots,s_0,\widehat L(k))$, the family \[\left\{(s_{n-1},\dots,s_0,\widehat L(l)) \suchthat l \leq k \right\}\] satisfies (GEN) for $(s_{n-1},\dots,s_0,\widehat L(k))$. Moreover, $\crL_n$ satisfies (IND). Indeed, given elements $e,e_1,\dots,e_m \in \crL_n$ with $e = \bigcup_{i=1}^m e_i$ where $e = (s_{n-1},\dots,s_0,\widehat L(k))$, then there is some $1 \leq i \leq m$ with $[s_{n-1},\dots,s_0,\chi_k] \in e_i \subseteq e$, from which it follows that $e_i = e$.
\eproof
\setlength{\parindent}{0cm} \setlength{\parskip}{0.5cm}

Given $(s_{n-1}, \dotsc, s_0, \widehat{L}(k)) \in \crL_n$, we let $[s_{n-1}, \dotsc, s_0, \widehat{L}(k)]$ denote its image in $Y_n$ with respect to the canonical projection $\Y_n \onto Y_n$. We set 
\begin{equation*}
    L_n \coloneq \menge{[s_{n-1}, \dotsc, s_0, \widehat{L}(k)] \subseteq Y_n}{(s_{n-1}, \dotsc, s_0, \widehat{L}(k)) \in \crL_n}.
\end{equation*}
We identify $X_n$ with $L_n\reg$ via $[s_{n-1},\dots,s_0,k] \mapsto [s_{n-1}, \dotsc, s_0, \widehat{L}(k)] \colon X_n \to L_n\reg$ for elements $s_0,\dots,s_{n-1} \in S$ with $s_m \in e_{m+1} S e_m$ for $0 \leq m \leq n-1$, and $k \in \dom_{L\reg} (s_{n-1} \cdots s_0)$.
The following is a consequence of Lemma~\ref{lem:crLLFWS}.
\blemma
The family $L_n$ is an $F_{n,\down}$-invariant generating independent locally finite weak semilattice in $Y_n$ in the sense of Definition \ref{def:GenIndLFWS}. In particular, we have an isomorphism $\cs(L_n) \cong C_0(Y_n), \, e \ma 1_e$, which induces the identification $Y_n \cong \widehat{L_n}$.
\elemma
\setlength{\parindent}{0cm} \setlength{\parskip}{0cm}

\begin{proof}
We view $Y_n$ as the orbit space of $\Y_n$ under the action of the finite group $F = F_{n-1} \times \cdots \times F_0$. Explicitly, for $f = (f_{n-1},\dots,f_0) \in F$ and $y = [s_{n-1},\dots,s_0,\chi] \in \Y_n$, we write $y^f = [s_{n-1} f_{n-1},f_{n-1}^{-1} s_{n-2} f_{n-2},\dots,f_1^{-1} s_0 f_0, f_0^{-1} . \chi]$ for the (right) action of $f$ on $y$. This induces an action of $F$ on $\crL_n$, which we will denote by $l^f$ for $l \in \crL_n$ and $f \in F$. Then, for $l_1,l_2 \in \crL_n$, we compute $[l_1] \cap [l_2] = [ \bigcup_{f \in F} l_1^f \cap l_2 ] = \bigcup_{f \in F} [l_1^f \cap l_2] = \bigcup_{f \in F, k \in l_1^f \down l_2}[k]$, giving us a finite union. 
\setlength{\parindent}{0cm} \setlength{\parskip}{0.5cm}

Suppose that $[d] \subseteq [l]$ for $d,l \in \crL_n$. The principal filter in $\widehat \crL_n \cong \Y_n$ associated to $d$ lies in $l^f$ for some $f \in F$, from which it follows that $d \subseteq l^f$. Therefore, if $[d] \subseteq [l_1] \cap [l_2]$, then there are $f_1,f_2 \in F$ with $d \subseteq l_1^{f_1} \cap l_2^{f_2}$, and hence $d^{f_2^{-1}} \subseteq l_1^{f_1 f_2^{-1}} \cap l_2$. Thus we can find $k \in l_1^{f_1 f_2^{-1}} \down l_2$ with $d^{f_2^{-1}} \subseteq k$, ergo $[d] \subseteq [k]$.

It follows that $L_n$ taken with the inclusion order is a weak semilattice. It is moreover locally finite, because if $A \subseteq L_n$ is a finite subset, the preimage $B = \{ b \in \crL_n \mid [b] \in A \}$ is finite, and its closure $\widebar B$ under $\down$ remains finite. Then $\widebar A = \{ [b] \mid b \in \widebar B \}$ is finite, contains $A$ and is closed under $\down$. 

To see that $L_n$ satisfies (IND), suppose $d,d_1,\dots,d_m \in \crL_n$ with $[d] = \bigcup_{i=1}^m [d_i]$. Then the principal filter in $\widehat \crL_n \cong \Y_n$ associated to $d$ lies in $d_i^f$ for some $i$ and some $f \in F$. It follows that $[d] \subseteq [d_i^f] = [d_i] \subseteq [d]$ and hence $[d]=[d_i]$.

The compact open subsets of $Y_n$ correspond directly to $F$-invariant compact open subsets of $\Y_n$. Finite unions and relative complements of such compact open subsets in $\Y_n$ become finite unions and relative complements in $Y_n$. It follows that $L_n$ inherits (GEN) from $\crL_n$.
\end{proof}
\setlength{\parindent}{0cm} \setlength{\parskip}{0.5cm}

\blemma\label{identification with discretisation}
The $F_{n,\down}$-equivariant topological correspondence $Z_n \colon X_n \to Y_n$ is isomorphic to the discretisation correspondence $\Omega_n \colon L_n\reg \to \widehat{L_n}$.
\elemma
\setlength{\parindent}{0cm} \setlength{\parskip}{0cm}

\begin{proof}
We have already identified $Y_n$ with $\widehat{L_n}$ and $X_n$ with $L_n\reg$. Recall that $\Omega^0 = \coprod_{k \in L\reg} \widehat L(k)$ and
\begin{align*}
Z_n & = {}_{G_n^{0}} G \times_{G_{n-1}} \cdots \times_{G_1} G \times_{G_0} \Omega^0 \\
 & = \left\{[s_{n-1},\dots,s_0,k,\chi] \suchthat \begin{array}{l} s_m \in e_{m+1} S e_m \text{ for $0 \leq m \leq n-1$,} \\ k \in \dom_{L\reg} (s_{n-1}\cdots s_0), \, \chi \in \widehat L(k) \end{array} \right\},
\end{align*}
and that $\Omega_n = \coprod_{e \in L_n \reg} \widehat{L_n}(e)$. The desired homeomorphism is given by 
\begin{align*}
Z_n & \to \Omega_n \\
[s_{n-1},\dots,s_0,k,\chi] & \mapsto ([s_{n-1},\dots,s_0,\widehat{L}(k)], [s_{n-1},\dots,s_0,\chi]). \qedhere
\end{align*}
\end{proof}
\setlength{\parindent}{0cm} \setlength{\parskip}{0.5cm}

\btheo\label{finite ivs discretisation}
Let $L$ be a locally finite weak semilattice equipped with the action of a finite inverse semigroup $F$. The discretisation correspondence $\Omega \colon F \ltimes L\reg \to F \ltimes \widehat L$ induces an isomorphism $\K_*(\cs(F \ltimes L\reg)) \isom \K_*(\cs(F \ltimes \widehat L))$.
\etheo
\setlength{\parindent}{0cm} \setlength{\parskip}{0cm}

\bproof
Since $F$ is a finite inverse semigroup, we can find a directed family of finite subsets $L_i \subseteq L$ containing $0$ which are $F$-invariant and $\down$-closed. 
Then, via Lemma \ref{lem:C*L}, we obtain $F$-equivariant inductive limit descriptions
\begin{align}
\label{e:C0X=lim}
    C_0(L\reg) & \cong \ilim_i C(L_i\reg) \\
\label{e:C0Y=lim}
    C_0(\widehat L) & \cong \ilim_i C(\widehat{L_i}).
\end{align}
Set $Y_i \coloneq \bigcup_{e \in L_i\reg} \widehat L(e) \subseteq \widehat L$. We have a canonical projection $\pi_i \colon Y_i \onto \widehat{L_i}$ given by $\pi_i(\chi)(e) = 1$ if and only if $\chi \in e$. Together with the open inclusion $Y_i \hookrightarrow \widehat L$, this defines an $F$-invariant topological correspondence $Y_i \colon \widehat{L_i} \to \widehat L$. The only thing to check is that the $F$-invariant map $\pi_i \colon Y_i \onto \widehat{L_i}$ satisfies $\pi_i^{-1}(\dom_{\widehat{L_i}} (s)) \subseteq \dom_{Y_i} (s)$ for each $s \in F$. Let $s \in F$ and $y \in \pi_i^{-1}(\dom_{\widehat{L_i}} (s))$. Then $\pi_i(y)(e) = 1$ for some $e \in \dom_{L_i} (s)$. Viewing $e \in L_i$ now as an element of $L$, this tells us that $y(e) = 1$. From $e \in \dom_{L} (s)$ it follows that $y \in \dom_{\widehat L} (s)$ and ultimately $y \in \dom_{Y_i} (s)$. Similarly, when $i \leq j$, we have an $F$-invariant topological correspondence $Y_{i,j} \coloneq \bigcup_{e \in L_i\reg} \widehat{L_j}(e) \colon \widehat{L_i} \to \widehat{L_j}$.
\setlength{\parindent}{0cm} \setlength{\parskip}{0.5cm}

Consider the discretisation correspondences $\Omega^0 = \coprod_{e \in L\reg} \widehat L(e) \colon L\reg \to \widehat L$ and, for each $i$, $\Omega^0_i = \coprod_{e \in L_i\reg} \widehat{L_i}(e) \colon L_i\reg \to \widehat{L_i}$. We claim that, for $i \leq j$, the following diagrams of $F$-equivariant topological correspondences commute.
\begin{equation}\label{inductive limit compatibility}
    \begin{tikzcd}
     L_i\reg \arrow[r, "\Omega^0_i"] \arrow[d, hook] & \widehat{L_i} \arrow[d, "Y_i"] & L_i\reg \arrow[r, "\Omega^0_i"] \arrow[d, hook] & \widehat{L_i} \arrow[d, "Y_{i,j}"]   \\
     L\reg \arrow[r, "\Omega^0"] & \widehat{L} & L_j\reg \arrow[r, "\Omega^0_j"] & \widehat{L_j}
    \end{tikzcd}
\end{equation}
We consider the first diagram. The composition along the left and bottom is given by the topological correspondence $\coprod_{e \in L_i\reg} \widehat{L}(e)$, while along the top and right we have $\coprod_{e \in L_i\reg} (\widehat{L_i}(e) \times_{\widehat{L_i}} Y_i)$. These are isomorphic because for each $e \in L_i\reg$, the space $\widehat{L_i}(e) \times_{\widehat{L_i}} Y_i$ is homeomorphic via the second factor projection to the subspace $\pi_i^{-1}(\widehat{L_i}(e)) = \widehat{L}(e)$ of $\widehat{L}$.

By continuity of $\K_*(F \ltimes -)$ on $F$-equivariant $*$-homomorphisms, we have inductive limit descriptions 
\begin{align*}
\K_*(F \ltimes C_0(L\reg)) & \cong \ilim_i \K_*(F \ltimes C(L_i\reg)) \\
\K_*(F \ltimes C_0(\widehat L)) & \cong \ilim_i \K_*(F \ltimes C(\widehat{L_i})).
\end{align*}
Combining this with \eqref{inductive limit compatibility} it suffices to prove that $\Omega^0_i \colon L_i\reg \to \widehat{L_i}$ induces an isomorphism $\K_*(\cs(F \ltimes L_i\reg)) \to \K_*(\cs(F \ltimes \widehat{L_i}))$ for each $i$. Setting
\begin{align*}
    \Omega_i^= & \coloneq \coprod_{e \in L_i\reg} \gekl{\chi_e} \\
    \Omega_i^< & \coloneq \coprod_{e \in L_i\reg} \menge{\chi_d}{d \lneq e},    
\end{align*}
we have the decomposition
\begin{equation*}
    \Omega^0_i = \Omega_i^= \amalg \Omega_i^<
\end{equation*}
of $F$-equivariant $L_i\reg$-$\widehat{L_i}$-correspondences.
It is now straightforward to see that, under the isomorphism $L_i\reg \cong \widehat{L_i}$ induced by the $F$-equivariant homeomorphism $L_i\reg \cong \widehat{L_i}, \, e \ma \chi_e$, $\Omega_i^=$ becomes isomorphic to the identity correspondence and $\Omega_i^<$ becomes a nilpotent correspondence (in the sense that a sufficiently high power of it becomes trivial). Hence $\Omega_i^0 \colon L_i\reg \to  \widehat{L_i}$ induces an isomorphism $\K_*(\cs(F \ltimes L_i\reg)) \to  \K_*(\cs(F \ltimes \widehat{L_i}))$, as required.
\eproof
\setlength{\parindent}{0cm} \setlength{\parskip}{0.5cm}

Theorem \ref{thm:Omega=KIso} follows immediately from Lemma \ref{identification with discretisation} and Theorem \ref{finite ivs discretisation}. We are now ready for the proof of Theorem \ref{thm:Disc_K}.

\bproof[Proof of Theorem \ref{thm:Disc_K}]
Corollary~\ref{cor:Ln=Iso?} and Theorem~\ref{thm:Omega=KIso} together imply that the discretisation correspondence $\Omega \colon S \ltimes L\reg \to S \ltimes \widehat L$ induces an isomorphism $\Ktop_*(\Omega) \colon \Ktop_*(S \ltimes L\reg) \to \Ktop_*(S \ltimes \widehat L)$. Explicitly, $\Ktop_*(\Omega)$ is given by $\mathbb L(\Ind_\Omega,\alpha_\Omega,f_\Omega) \colon \Lz^{\cF_{\rm fin}^X}\K_*(G \ltimes C_0(X)) \to \Lz^{\cF_{\rm fin}^Y}\K_*(H \ltimes C_0(Y))$ under the identification between these localisations and topological K-theory, where $X = L\reg$, $Y = \widehat L$ and $G = S \ltimes X$, $H = S \ltimes Y$. As discussed in \cite[Example 5.11]{Mil3}, there is a commutative diagram
\begin{equation}
\label{e:CD_LL_Disc_BC}
\begin{tikzcd}
	{\Ktop_*(S \ltimes L\reg)} & {\K_*(\cs(S \ltimes L\reg))} & {\K_*(\csr(S \ltimes L\reg))} \\
	{\Ktop_*(S \ltimes \widehat L)} & {\K_*(\cs(S \ltimes \widehat L))} & {\K_*(\csr(S \ltimes \widehat L))},
	\arrow[from=1-1, to=1-2]
	\arrow["{\mu_{S \ltimes L\reg}}", curve={height=-18pt}, from=1-1, to=1-3]
	\arrow["{\Ktop_*(\Omega)}", from=1-1, to=2-1]
	\arrow[from=1-2, to=1-3]
	\arrow["{\K_*(\cs(\Omega))}", from=1-2, to=2-2]
	\arrow["{\K_*(\csr(\Omega))}", from=1-3, to=2-3]
	\arrow[from=2-1, to=2-2]
	\arrow["{\mu_{S \ltimes \widehat L}}"', curve={height=18pt}, from=2-1, to=2-3]
	\arrow[from=2-2, to=2-3]
\end{tikzcd}
\end{equation}
where $\mu_{S \ltimes L\reg}$ and $\mu_{S \ltimes \widehat L}$ are the Baum--Connes assembly maps. Thus, under the Baum--Connes conjecture, $\Ktop_*(\Omega)$ is identified with $\K_*(\csr(\Omega))$.
\eproof

\bremark
\label{rem:BC=Disc}
Let $L$ be a countable locally finite weak semilattice equipped with the action of a countable inverse semigroup $S$ and suppose that $S \ltimes \widehat L$ is Hausdorff. Assume that, for every unit $l \in L\reg = (S \ltimes L\reg)^{(0)}$, the isotropy group $(S \ltimes L\reg)_l^l$ satisfies the Baum--Connes conjecture. Then $S \ltimes \widehat L$ satisfies the Baum--Connes conjecture if and only if the discretisation correspondence $\Omega$ induces an isomorphism
\[ \K_*(\csr(\Omega)) \colon \K_*(\csr(S \ltimes L\reg) ) \isom \K_*(\csr(S \ltimes \widehat L)).\]
Indeed, our assumptions imply that $S \ltimes L\reg$ satisfies the Baum--Connes conjecture by Remark~\ref{rem:BC_discGpd}. Hence the implication \an{$\Rarr$} follows from Theorem~\ref{thm:Disc_K}. For \an{$\Larr$}, note that, in the diagram \eqref{e:CD_LL_Disc_BC}, the map $\mu_{S \ltimes L\reg}$ is an isomorphism by our assumptions, and that $\Ktop_*(\Omega)$ is an isomorphism by Theorem~\ref{thm:Disc_K}. Hence, as \eqref{e:CD_LL_Disc_BC} commutes, $\mu_{S \ltimes \widehat L}$ is an isomorphism if and only if $\K_*(\csr(\Omega))$ is an isomorphism, as claimed.
\eremark

\section{Applications and examples}
\label{s:AppEx}

Let us now address the question how independent resolutions help to compute or understand groupoid homology and K-theory for reduced groupoid $\cs$-algebras. We then apply our results to concrete examples.

\subsection{Computing groupoid homology using independent resolutions}

Suppose that $G$ is an ample groupoid, and $G_i$ ($i=-1, 0, 1, \dotsc$) is an independent resolution of $G$ in the sense of \S~\ref{ss:IndResDef}. In the following, we use the same notation as in \S~\ref{ss:IndResDef}. The decompositions $G_i^{(0)} = C_i \amalg O_i$, together with the isomorphisms $G_i \vert_{C_i} \cong G_{i-1} \vert_{O_{i-1}}$, induce long exact sequences in groupoid homology:
\begin{equation}
\label{e:LongExSeq_H}
 \dotso \to \Hlgy_{*+1}(G_{i-1} \vert_{O_{i-1}}) \to \Hlgy_*(G_i \vert_{O_i}) \to \Hlgy_*(G_i) \to \Hlgy_*(G_{i-1} \vert_{O_{i-1}}) \to \Hlgy_{*-1}(G_i \vert_{O_i}) \to \dotso
\end{equation}

For non-negative integers $p, q$, set $D_{pq} \coloneq \Hlgy_q(G_p \vert_{O_p})$ and $E_{pq} \coloneq \Hlgy_q(G_p)$. Then we obtain an exact couple
\[
 \xymatrix{
 D \ar[rr]^i & & D \ar[dl]^j \\
 & E \ar[ul]^k &   
 }
\]
Here $i \colon D_{pq} = \Hlgy_q(G_p \vert_{O_p}) \to \Hlgy_{q-1}(G_{p+1} \vert_{O_{p+1}}) = D_{p+1,q-1}$ is the boundary map from the long exact sequence \eqref{e:LongExSeq_H}, $j \colon D_{pq} = \Hlgy_q(G_p \vert_{O_p}) \to \Hlgy_q(G_p) = E_{pq}$ is induced by the canonical inclusion $G_p \vert_{O_p} \into G_p$, and $k \colon E_{pq} = \Hlgy_q(G_p) \to \Hlgy_q(G_{p-1} \vert_{O_{p-1}}) = D_{p-1,q}$ is given by the composition
\[
 \Hlgy_q(G_p) \to \Hlgy_q(G_p \vert_{C_p}) \to \Hlgy_q(G_{p-1} \vert_{O_{p-1}}),
\]
where the first map is induced by restriction, and the second map is induced by the isomorphism $G_p \vert_{C_p} \cong G_{p-1} \vert_{O_{p-1}}$ which is part of the data of an independent resolution (see \S~\ref{ss:IndResDef}).

This exact couple gives rise to the spectral sequence $E^r_{pq}$, where $E^0_{pq} = E_{pq} = \Hlgy_q(G_p)$ and $E_{pq}^1$ is given by the $p$-th homology of the chain complex
\[
 \dotso \to \Hlgy_q(G_2) \to \Hlgy_q(G_1) \to \Hlgy_q(G_0) \to \Hlgy_q(G) \to 0,
\]
where the chain maps are given by $j \circ k$. The construction of the spectral sequence can be found for instance in \cite[\S~5.9]{Wei} (see in particular \cite[Application~5.9.8]{Wei}).

Since groupoid homology vanishes in negative degrees, this spectral sequence converges to $0$, i.e., for all $p, q$ there exists $r_{pq}$ such that $E_{pq}^r = 0$ for all $r \geq r_{pq}$. Then the spectral sequence determines $\Hlgy_*(G) = \Hlgy_*(G_{-1})$ up to extension problems, assuming that we understand $\Hlgy_*(G_p)$ and the maps $\Hlgy_*(G_{p+1}) \to \Hlgy_*(G_p)$ for all $p \geq 0$.

Here is an example application.
\bcor
\label{cor:IndResGPDH_0}
Suppose that we are given an independent resolution for $G = G_{-1}$, consisting of groupoid $G_i$ which satisfy $\Hlgy_*(G_i) = 0$ for all $*>0$ and $i \geq 0$. Then $\Hlgy_q(G)$ is given by the $q$-th homology of the chain complex
\[
 \dotso \to \Hlgy_0(G_{p+1}) \to \Hlgy_0(G_p) \to \Hlgy_0(G_{p-1}) \to \dotso \to \Hlgy_0(G_1) \to \Hlgy_0(G_0) \to 0.
\]
\ecor
\setlength{\parindent}{0cm} \setlength{\parskip}{0cm}

\bproof
This follows immediately from an analysis of the spectral sequence $E_{pq}^r$, using that many terms vanish and that it converges to $0$.
\eproof
\setlength{\parindent}{0cm} \setlength{\parskip}{0.5cm}

\subsection{Applications of independent resolutions in K-theory}
\label{ss:AppIndRes_K}

Let us now turn to $\cs$-algebras and K-theory. Consider the following general setting: Let 
\begin{equation}
\label{e:AAA}
 \dotso \to A_2 \to A_1 \to A_0 \to A \to 0 \to 0 \to \dotso
\end{equation}
be a long exact sequence of $\cs$-algebras. Let us split it into short exact sequences, i.e., consider the commutative diagram
\[
\scalebox{0.8}{
 \xymatrix{
 \dotso \ar[rr] & & A_2 \ar[dr] \ar[rr] & & A_1 \ar[dr] \ar[rr] & & A_0 \ar[dr] \ar[rr] & & A \ar[dr] \ar[rr] & & 0 \ar[dr] \ar[rr] & & \dotso\\
 &  \dotso \ar[ur] & & I_1 \ar[ur] & & I_0 \ar[ur] & & A \ar[ur] & & 0 \ar[ur] & & \dotso &   
 }
}
\]
where we have short exact sequences 
\begin{equation}
\label{e:ShortExSeq}
0 \to I_p \to A_p \to I_{p-1} \to 0,
\end{equation}
and we set $I_{-1} \coloneq A$, $A_{-1} \coloneq A$, $I_p \coloneq 0$ and $A_p \coloneq 0$ for all $p < -1$. 

Applying K-theory produces a long exact sequence
\begin{equation}
\label{e:LongExSeq}    
 \dotso \to \K_{*+1}(I_{p-1}) \to \K_*(I_p) \to \K_*(A_p) \to \K_*(I_{p-1}) \to \K_{*-1}(I_p) \to \dotso
\end{equation}
for each short exact sequence $0 \to I_p \to A_p \to I_{p-1} \to 0$.

Set $D_{pq} \coloneq \K_q(I_p)$ and $E_{pq} \coloneq \K_q(A_p)$. Then we obtain an exact couple
\[
 \xymatrix{
 D \ar[rr]^i & & D \ar[dl]^j \\
 & E \ar[ul]^k &   
 }
\]
Here $i \colon D_{pq} = \K_q(I_p) \to \K_{q-1}(I_{p+1}) = D_{p+1,q-1}$ is the boundary map from the long exact sequence \eqref{e:LongExSeq}, $j \colon D_{pq} = \K_q(I_p) \to \K_q(A_p) = E_{pq}$ is induced by the map $I_p \to A_p$ from the short exact sequence \eqref{e:ShortExSeq}, and $k \colon E_{pq} = \K_q(A_p) \to \K_q(I_{p-1}) = D_{p-1,q}$ is induced by the map $A_p \to I_{p-1}$ from the short exact sequence \eqref{e:ShortExSeq}. 

This exact couple gives rise to the spectral sequence $E^r_{pq}$, where $E^0_{pq} = E_{pq} = \K_q(A_p)$ and $E_{pq}^1$ is given by the $p$-th homology of the chain complex
\[
 \dotso \to \K_q(A_2) \to \K_q(A_1) \to \K_q(A_0) \to \K_q(A) \to 0
\]
The construction of the spectral sequence can be found for instance in \cite[\S~5.9]{Wei} (see in particular \cite[Application~5.9.8]{Wei}).

We are interested in the question when this spectral sequence converges to $0$, i.e., for all $p, q$ there exists $r_{pq}$ such that $E_{pq}^r = 0$ for all $r \geq r_{pq}$. Then the spectral sequence determines $\K_*(A)$ up to extension problems, assuming that we understand $\K_*(A_p)$ and the maps $\K_*(A_{p+1}) \to \K_*(A_p)$ for all $p \geq 1$. It follows from \cite[Example~5.9.1]{Wei} that $E_{pq}^r$ converges to $0$ if and only if the map $\K_q(I_p) \to \K_{q-1}(I_{p+1}) \to \dotso \to \K_{q-r+1}(I_{p+r-1}) \to \K_{q-r}(I_{p+r})$ vanishes for sufficiently big $r$ (where \an{sufficiently big} may depend on $p$ and $q$). For example, convergence to $0$ is guaranteed if our initial long exact sequence \eqref{e:AAA} is of finite length, i.e., $A_k = 0$ for sufficiently big $k$. In that case we have $I_p = 0$ for sufficiently big $p$.

Assume now that we are given an independent resolution $G_i = S_i \ltimes \widehat{L_i}$ of an ample groupoid $G$. If our groupoids $G_i$ are exact, then the independent resolution yields a long exact sequence of reduced groupoid $\cs$-algebras, and the above discussion applies to $A_i = \csr(G_i)$. If furthermore all $S_i$ and $L_i$ are countable, all $G_i$ are Hausdorff, and $G_i$ and $S_i \ltimes L\reg_i$ satisfy the Baum--Connes conjecture, then Corollary~\ref{cor:OSF_K} allows to compute K-theory of $\csr(G_i)$ in terms of K-groups of isotropy groups, which are typically easier to handle. Here is a first direct consequence.

\bcor
\label{cor:FLIR->BC}
Assume that we are given a finite length independent resolution $G_i = S_i \ltimes \widehat{L_i}$ of an ample groupoid $G$, and suppose that for every $i$, the groupoids $G_i$ are Hausdorff and exact, and $S_i$ as well as $L_i$ are countable. Further assume that for every $i$ and every unit $l \in L_i\reg = (S_i \ltimes L_i\reg)^{(0)}$, the isotropy group $(S_i \ltimes L_i\reg)_l^l$ satisfies the Baum--Connes conjecture, and that for every $i$, the discretisation correspondence $\Omega_i \colon S_i \ltimes L_i\reg \to S_i \ltimes \widehat{L_i}$ from \S~\ref{s:Disc} induces an isomorphism
$\K_*(\csr(\Omega_i)) \colon \K_*(\csr(S_i \ltimes L_i\reg) ) \isom \K_*(\csr(S_i \ltimes \widehat L_i))$. Then $G$ satisfies the Baum--Connes conjecture.
\ecor
\setlength{\parindent}{0cm} \setlength{\parskip}{0cm}

\bproof
Since our groupoids $G_i$ are exact, our finite length independent resolution yields a long exact sequence of reduced groupoid $\cs$-algebras, which splits into finitely many short exact sequences, which in turn induce finitely many long exact sequences in K-theory. We obtain analogous long exact sequences for the left-hand side of the Baum--Connes conjecture, i.e., in $\Ktop_*$ (see \cite[\S~7.1]{Tu_1999a} and \cite[\S~5]{Tu}). By naturality of these long exact sequences, induction on the length of our independent resolution shows that if $G_i$ satisfy the Baum--Connes conjecture for all $i$, then so does $G$. By Remark~\ref{rem:BC=Disc}, our assumptions imply that $G_i$ satisfy the Baum--Connes conjecture for all $i$, which completes the proof.
\eproof
\setlength{\parindent}{0cm} \setlength{\parskip}{0.5cm}

In the following, let us develop a concrete application. We need the following terminology: Given a countable discrete group $\Gamma$, we say that the discretisation formula with arbitrary coefficients is valid for $\Gamma$ if for every countable semilattice $E$, every $\Gamma$-action $\Gamma \acts E$ by order automorphisms and every separable $\Gamma$-$\cs$-algebra $A$, we have an isomorphism
\begin{equation}
\label{e:DiscForm_K_Gamma}
 \K_*(\Gamma \ltimes_r (A \otimes C_0(E\reg))) \cong \K_*(\Gamma \ltimes_r (A \otimes C_0(\widehat{E})) ).
\end{equation}
Here $\Gamma$ acts via diagonal actions on $A \otimes C_0(E\reg)$ and $A \otimes C_0(\widehat{E})$.
\bcor
\label{cor:EquiBC}
Let $\Gamma$ be a countable discrete group. Assume that $\Gamma$ is exact. Suppose that there exists a finite length independent resolution for $G_{-1} = \Gamma$ consisting of groupoids $G_i$ of the form $G_i = \Gamma \ltimes \widehat{E}_i$ for countable semilattices $E_i$ with the property that, for all $e \in E\reg_i$, the stabilizer group $\Gamma_e$ is finite. Then $\Gamma$ satisfies the Baum--Connes conjecture with coefficients if and only if the discretisation formula with arbitrary coefficients is valid for $\Gamma$.
\ecor
\setlength{\parindent}{0cm} \setlength{\parskip}{0cm}

\bproof
The implication \an{$\Rarr$} follows from \cite[Theorem~3.12]{CEL2}. For the converse, we use the following equivalent formulation of the Baum--Connes conjecture with coefficients. As pointed out in \cite[\S~5.3]{CMR}, a countable discrete group $\Gamma$ satisfies the Baum--Connes conjecture with coefficients if and only if the following statement is true: For every separable $\Gamma$-$\cs$-algebra $A$, if $\K_*(F \ltimes_r A) = 0$ for every finite subgroup $F \subseteq \Gamma$ (here the $F$-action on $A$ is given by restricting the $\Gamma$-action on $A$), then $\K_*(\Gamma \ltimes_r A) = 0$. Now suppose that $A$ is a separable $G$-$\cs$-algebra such that $\K_*(F \ltimes_r A) = 0$ for every finite subgroup $F \subseteq \Gamma$. Since $\Gamma$ is exact, our finite length independent resolution for $\Gamma$ produces an exact sequence
\begin{equation}
\label{e:ses_AxGamma}
 0 \to A_k \to \dotso \to A_2 \to A_1 \to A_0 \to \Gamma \ltimes_r A \to 0,
\end{equation}
where each $A_i$ is of the form $A_i = \Gamma \ltimes_r (A \otimes C_0(\widehat{E_i})) $. Using \eqref{e:DiscForm_K_Gamma} and the analogue of Corollary~\ref{cor:OSF_K}, we deduce that
\[
 \K_*(\Gamma \ltimes_r (A \otimes C_0(\widehat{E_i}))) \cong \bigoplus_{[e] \in \Gamma \backslash E\reg_i} \K_*(\Gamma_e \ltimes_r A).
\]
Since $\Gamma_e$ is a finite subgroup of $\Gamma$ by assumption, it follows that $\K_*(\Gamma \ltimes_r (A \otimes C_0(\widehat{E_i}))) = 0$ for all $0 \leq i \leq k$. Exactness of \eqref{e:ses_AxGamma} then implies that $\K_*(\Gamma \ltimes_r A) = 0$, as desired.
\eproof
\setlength{\parindent}{0cm} \setlength{\parskip}{0.5cm}

\bremark
\label{rem:FLIndResSubgroups}
Let $\Gamma$ be a countable discrete group. Suppose that there exists a finite length independent resolution for $G_{-1} = \Gamma$ consisting of groupoids $G_i = \Gamma \ltimes \widehat{E}_i$ as in Corollary~\ref{cor:EquiBC} with the property that, for all $e \in E\reg_i$, the stabilizer group $\Gamma_e$ is finite. Then the same is true for any subgroup $\Gamma'$ of $\Gamma$. Indeed, we just have to restrict the $\Gamma$-action on $E_i$ to $\Gamma'$ and observe that $G'_i \coloneq \Gamma' \ltimes \widehat{E}_i$ is an independent resolution for $G'_{-1} = \Gamma'$ ($\Gamma'_e$ is finite because $\Gamma'_e \subseteq \Gamma_e$).
\eremark

Here is an alternative approach to Corollary~\ref{cor:EquiBC}. We first need the following.
\bremark
\label{rem:xFromCEL} 
Suppose that $\Gamma$ is a countable discrete group acting on a countable semilattice $E$. In this situation, an element $\bmx \in \KK^{\Gamma}(C_0(E\reg),C_0(\widehat{E}))$ was constructed in \cite[\S~3]{CEL2} such that the induced map $\K_*(\Gamma \ltimes_r \bmx) \colon \K_*(\Gamma \ltimes_r C_0(E\reg)) \to \K_*(\Gamma \ltimes_r C_0(\widehat{E}))$ in K-theory  can be identified with the map $\K_*(\csr(\Omega))$ induced by the discretisation correspondence $\Omega \colon \Gamma \ltimes E\reg \to \Gamma \ltimes \widehat{E}$ from \S~\ref{s:Disc}.
\eremark
With this preparation, we can now state the following applications.
\bcor
Let $\Gamma$ be a countable discrete group. Assume that $\Gamma$ is exact. Suppose that there exists a finite length independent resolution for $G_{-1} = \Gamma$ consisting of groupoids $G_i$ of the form $G_i = \Gamma \ltimes \widehat{E}_i$ for countable semilattices $E_i$.
\setlength{\parindent}{0cm} \setlength{\parskip}{0cm}

\begin{enumerate}[label=(\roman*)]
    \item Suppose that, for every $i$ and every $e \in E\reg_i$, the stabilizer group $\Gamma_e$ satisfies the Baum--Connes conjecture, and that for every $i$, the element $\bmx_i \in \KK^{\Gamma}(C_0(E_i\reg),C_0(\widehat{E_i}))$ as in Remark~\ref{rem:xFromCEL} induces an isomorphism $\K_*(\Gamma \ltimes_r \bmx_i) \colon \K_*( \Gamma \ltimes_r C_0(E_i\reg)) \isom \K_*(\Gamma \ltimes_r C_0(\widehat{E_i}) )$. Then $\Gamma$ satisfies the Baum--Connes conjecture.
    \item Let $A$ be a separable $\Gamma$-$\cs$-algebra. Suppose that, for every $i$ and every $e \in E\reg_i$, the stabilizer group $\Gamma_e$ satisfies the Baum--Connes conjecture with coefficient $A$, and that for every $i$, the element $\bmx_i \in \KK^{\Gamma}(C_0(E_i\reg),C_0(\widehat{E_i}))$ as in Remark~\ref{rem:xFromCEL} induces an isomorphism $\K_*(\Gamma \ltimes_r ([\id_A] \otimes_{\Cz} \bmx_i)) \colon \K_*(\Gamma \ltimes_r (A \otimes C_0(E_i\reg))) \isom \K_*( \Gamma \ltimes_r (A \otimes C_0(\widehat{E_i})) )$. Then $\Gamma$ satisfies the Baum--Connes conjecture with coefficient $A$.
    \item The following are equivalent:
    \begin{enumerate}
        \item[(a)] For every $i$ and every $e \in E\reg_i$, the stabilizer group $\Gamma_e$ satisfies the Baum--Connes conjecture with arbitrary coefficients, and for every $i$ and every separable $\Gamma$-$\cs$-algebra $A$, the element $\bmx_i \in \KK^{\Gamma}(C_0(E_i\reg),C_0(\widehat{E_i}))$ as in Remark~\ref{rem:xFromCEL} induces an isomorphism $\K_*(\Gamma \ltimes_r ([\id_A] \otimes_{\Cz} \bmx_i)) \colon \K_*(\Gamma \ltimes_r (A \otimes C_0(E_i\reg))) \isom \K_*( \Gamma \ltimes_r (A \otimes C_0(\widehat{E_i})) )$. 
        \item[(b)] $\Gamma$ satisfies the Baum--Connes conjecture with arbitrary coefficients.
    \end{enumerate}
\end{enumerate}
\ecor
\setlength{\parindent}{0cm} \setlength{\parskip}{0cm}

\bproof
The proof of (i) is analogous to the one of Corollary~\ref{cor:FLIR->BC}. The proof of (ii) is also similar, adding the coefficient algebra $A$. For (iii), (a) $\Rarr$ (b) follows from (ii), and (b) $\Rarr$ (a) follows from \cite[Theorem~3.12]{CEL2} because the Baum--Connes conjecture with arbitrary coefficients passes to subgroups by \cite[Theorem~2.5]{CE_2001}.
\eproof
\setlength{\parindent}{0cm} \setlength{\parskip}{0.5cm}

\subsection{Examples}
\label{s:Ex}

Garside categories provide a source for examples of groupoids which admit finite length independent resolutions. Let us first give a brief introduction to Garside categories, which in this article will mean small categories with Garside families with particular properties as specified below.\footnote{The notion of a Garside category is in general somewhat flexible; the emphasis is on studying Garside families with various desirable properties, but different situations merit different requirements on the Garside family.} We will follow the exposition in \cite{Li21b} and refer the reader to \cite{Deh15,Li21a,Li21b} and the references therein for more details.

Let $\fC$ be a small category. The set of objects $\fC^0$ in $\fC$ will be viewed as a subset of the set of morphisms by identifying an object with the corresponding identity morphism. Moreover, we will identify the category with its set of morphisms, again denoted by $\fC$. Let $\mfd: \: \fC \to \fC^0$ and $\mft: \: \fC \to \fC^0$ be the domain and target maps. $\fC^*$ denotes the set of invertible elements of $\fC$. We need to introduce some terminology.

\bdefin
\label{d:Garside1}
$ $
\setlength{\parindent}{0cm} \setlength{\parskip}{0cm}

\begin{itemize}
 \item $\fC$ is called left cancellative, if, for all $c, x, y \in \fC$ with $\mfd(c) = \mft(x) = \mft(y)$, $cx = cy$ implies $x = y$.
 \item $\fC$ is called finitely aligned if for all $a,b \in \fC$, there exists a finite subset $\gekl{c_i} \subseteq \fC$ such that $a \fC \cap b \fC = \bigcup_i c_i \fC$.
 \item Given $a, b \in \fC$, we write $a \preceq b$ if $a$ is a left divisor of $b$, i.e., $b \in a \fC$. We write $a \prec b$ if $b \fC \subsetneq a \fC$. We write $a \tipreceq b$ if $a$ is a right divisor of $b$, i.e., $b \in \fC a$. We write $a \tiprec b$ if $\fC b \subsetneq \fC a$. We write $a =^* b$ if $a \in b \fC^*$ (which is equivalent to $a \fC = b \fC$). 
 \item A subset $\fS \subseteq \fS$ is closed under right comultiples if for all $r, s \in \fS$ and $a \in \fC$ with $r \preceq a$, $s \preceq a$, there exists $t \in \fS$ with $r \preceq t$, $s \preceq t$ and $t \preceq a$.
\end{itemize}
\edefin
\setlength{\parindent}{0cm} \setlength{\parskip}{0.5cm}

Now we can introduce the concept of a Garside family.
\bdefin\label{d:Garside2}
Suppose $\fS \subseteq \fC$ is closed under right comultiples such that $\fS \cup \fC^*$ generates $\fC$ and $\fS^{\sharp} \coloneq \fS \fC^* \cup \fC^*$ is closed under right divisors. 
\setlength{\parindent}{0cm} \setlength{\parskip}{0cm}

\begin{itemize}
 \item A finite sequence $s_1, s_2, \dotsc$ in $\fC$ is called a \emph{path} if $\mfd(s_k) = \mft(s_{k+1})$ for all $k$. Such a path will be denoted by $s_1 s_2 \dotsm$. A path $s_1, \dotsc, s_l \in \fS^{\sharp}$ is called normal if for all $1 \leq k \leq l-1$ and $r \in \fS$, if $r \preceq s_k s_{k+1}$ then $r \preceq s_k$.
 \item For $a \in \fC$, a normal decomposition or normal form of $a$ is given by a normal path $s_1 \dotsc s_l$ in $\fS^{\sharp}$ with $a = s_1 \dotsm s_l$.
 \item $\fS$ is called a \emph{Garside family} if every element in $\fC$ admits a normal decomposition.
\end{itemize}
\edefin
\setlength{\parindent}{0cm} \setlength{\parskip}{0.5cm}

In the following, assume that $\fC$ is a finitely aligned left cancellative small category, and that $\fS \subseteq \fC$ is a Garside family. Further suppose that the following hold:
\setlength{\parindent}{0cm} \setlength{\parskip}{0cm}

\begin{itemize}
 \item $\fS$ is $=^*$-transverse, i.e., for all $s_1, s_2 \in \fS$, $s_1 =^* s_2$ implies $s_1 = s_2$. 
 \item $\fS \cap \fC^* = \emptyset$.
 \item $\fS$ is locally finite, i.e., $\# \mfv \fS < \infty$ for all $\mfv \in \fC^0$. 
 \item For all $\ell \geq 1$, $(\fS^{\leq \ell})^{\sharp}$ is closed under left divisors. Here $\fS^{\leq \ell} \coloneq \fS \cup \fS^2 \cup \dotso \cup \fS^{\ell}$.
\end{itemize}
\setlength{\parindent}{0cm} \setlength{\parskip}{0.5cm}

Note that $L \coloneq \{a \fC \mid a \in \fC \} \cup \gekl{0}$ is a weak semilattice (with respect to inclusion as partial order) because $\fC$ is finitely aligned. Moreover, $L$ is locally finite because our assumptions imply (see the discussion in \cite[\S~3]{Li21b}) that, for all $\ell \geq 1$, $\fS^{\leq \ell}$ is $\downarrow$-closed and locally finite (i.e. $\# \mfv \fS^{\leq \ell} < \infty$ for all $\mfv \in \fC^0$). Let $S$ be the left inverse hull of $\fC$ as in \cite{Li21a,Li21b}, which acts as an inverse semigroup on $L$ in the natural way. 
Let $X$ be a locally compact Hausdorff totally disconnected space with an action of $S$ and an $S$-equivariant generating representation $U \colon L \to \cO_c(X)$ of $L$. Note that via Lemma \ref{lem:X=whV}, the data of $(X,U)$ corresponds to a closed invariant subspace of $\widehat{L}$. For $a \in \fC$ let
\begin{equation}
\R(a \fC) := \left\{ F \subseteq a \fS \suchthat  \bigcup_{b \in F} U(b \fC)  = U(a \fC) \right\} \cup \{\{a \fC\}\}.
\end{equation}
It follows from \cite[Lemma~3.8]{Li21b} that the system $\R$ of concrete finite covers in $L$ is thorough. It is $S$-invariant by construction.

Uniform finiteness of $\R(a \fC)$ is ensured if $\sup_{v \in \fC^0} \# \, v \fS < \infty$. 
Hence the construction in \S~\ref{ss:IndResConst} produces a finite length independent resolution of $S \ltimes X$.

Examples~\ref{ex:HRG6}, \ref{ex:ssg} as well as the examples discussed in \ref{sss:Garside} below are special cases.

\bex[Higher rank graphs]
\label{ex:HRG6}

Let $\Lambda$ be a source-free row-finite higher rank graph as in Examples~\ref{ex:HRG1}, \ref{ex:HRG2}, \ref{ex:HRG3}, \ref{ex:HRG4} and \ref{ex:HRG5}, and $G_{\Lambda}$ its graph groupoid. Theorem \ref{t:resolution} yields a finite length independent resolution $G_i = S_{\Lambda} \ltimes \widehat{L_i}$ for $G_{\Lambda}$, as explained in Example~\ref{ex:HRG5}. Since the isotropy groups of the groupoids $S_{\Lambda} \ltimes L\reg_i$ are all trivial, we are in the situation of Corollary~\ref{cor:IndResGPDH_0}, which yields that groupoid homology $\Hlgy_*(G_{\Lambda})$ is given by the homology of the chain complex 
\[
 \dotso \to \bigoplus_{S_{\Lambda} \backslash L_{i+1}\reg} \Zz \to \bigoplus_{S_{\Lambda} \backslash L_i\reg} \Zz \to  \dotso
\]
where $L_i$ are as in Example~\ref{ex:HRG5}.
\setlength{\parindent}{0.5cm} \setlength{\parskip}{0cm}

Moreover, since $G_{\Lambda}$ is Hausdorff and amenable, all our groupoids satisfy the Baum--Connes conjecture and are exact. Corollary~\ref{cor:OSF_K} allows us to compute $\K_*(\csr(G_i)) \cong \bigoplus_{S_{\Lambda} \backslash L_i\reg} \K_*(\Cz)$ because all the isotropy groups are trivial. Hence the discussion in \S~\ref{ss:AppIndRes_K} shows that there is a spectral sequence $E_{pq}^r$, with $E_{pq}^0 = \bigoplus_{S_{\Lambda} \backslash L_p\reg} \Zz$ for even $q$ and $E_{pq}^0 = 0$ for odd $q$, such that $E_{pq}^r$ converges to $0$ and hence determines $\K_*(\csr(G_{\Lambda}))$ up to extension problems.
\eex
\setlength{\parindent}{0cm} \setlength{\parskip}{0.5cm}

\bex[Self-similar groups]
\label{ex:ssg}
A \emph{self-similar group action} $(G,X,\sigma)$ is the data of a group $G$, a finite set $X$ equipped with an action $G \times X \to X$ written $(g,x) \mapsto g(x)$, and a cocycle $\sigma \colon G \times X \to G$ written $(g,x) \mapsto g|_x$. This induces an action of $G$ on the tree $X^*$ of words via the recursive formula $g(xw) = g(x) g|_x(w)$ for $g \in G$, $x \in X$ and $w \in X^*$ and a cocycle on $X^*$ given by $g|_{x_1\cdots x_n} = g|_{x_1}\cdots|_{x_n}$. From this data one produces a unital inverse semigroup $S$ generated by $G$ and $X$ subject to the relations of $G$ and the further relations
\setlength{\parindent}{0cm} \setlength{\parskip}{0cm}

\begin{enumerate}
\item $x^*y = \begin{cases} 1 & x = y \\ 0 & x \neq y \end{cases} \,$ for $x, y \in X$ (the polycyclic relations in $X$),
\item $g x = g(x) g|_x$ for $g \in G $ and $x \in X$.
\end{enumerate}
The inverse semigroup $S$ acts on the tree $X^*$ of words in $X$ through the action of $G$ and left concatenation of $X$. We obtain an action of $S$ on the boundary $\partial X^* = X^{\mathbb N}$ satisfying $g.(wz) = g(w) g|_w(z)$ and $w.z = wz$ for $g \in G$, $w \in X^*$, $z \in X^{\mathbb N}$. The groupoid $\mathscr G_{G,X}$ associated to $(G,X,\sigma)$ is the transformation groupoid $S \ltimes X^{\mathbb N}$, and its full $\cs$-algebra is the Nekrashevych $\cs$-algebra $\mathcal O_{G,X}$ \cite{Nek09}.
\setlength{\parindent}{0cm} \setlength{\parskip}{0.5cm}

The idempotent semilattice $E$ of $S$ is identified with the set $X^* \cup \{0\}$, which is represented on $X^{\mathbb N}$ via
\[ U \colon E \to \cO_c(X^{\mathbb N}), \, w \mapsto wX^{\mathbb N}  \]
for $w \in X^*$. This is an $S$-equivariant generating representation of $E$ on $X^{\mathbb N}$, but it is not  independent because $wX^{\mathbb N} = \bigcup_{x \in X} wxX^{\mathbb N}$ for each $w \in X^*$. Note that $U \colon E \to \cO_c(X^{\mathbb N})$ is a special case of the representation considered for higher rank graphs, for the rank $1$ graph on a single vertex with edge set $X$. For each $w \in X^*$ consider the finite cover $R(w) := \{wxX^{\mathbb N} \mid x \in X \}$ of $wX^{\mathbb N}$ and set \[\mathscr R(wX^{\mathbb N}) := \{\{wX^{\mathbb N}\}, R(w)\}.\] 
Then $\R$ is a thorough $S$-invariant system of concrete finite covers in $E$ with respect to $U$. Each $\mathscr R(wX^{\mathbb N})$ contains exactly one non-trivial finite cover. Theorem \ref{t:resolution} therefore produces an independent resolution of length $1$.
The semilattice $L_0 = E$ is the idempotent semilattice $E$ of $S$, with $S$-action given by $s . e = s e s^*$ for $e \leq s^* s$. The semilattice $L_1 = \{0\} \cup \{(wX^{\mathbb N};R(w)) \mid w \in X^{\mathbb N} \}$ also bijects $S$-equivariantly with $E$, but satisfies $de = 0$ whenever $d \neq e$ in $L_1$, so that $\widehat{L_1} \cong X^*$. 
Discretisation identifies both $S \ltimes \widehat{L_0}$ and $S \ltimes \widehat{L_1}$ with $S \ltimes X^*$, which is itself Morita equivalent to $G$. Our independent resolution induces the long exact sequences constituting Theorems A and B in \cite{MS}, where the key extra step of identifying the maps $\Hlgy_*(G) \to \Hlgy_*(G)$ and $\K_*(\cs(G)) \to \K_*(\cs(G))$ is performed.\footnote{Note also that on the $\cs$-algebraic side, \cite{MS} uses the full $\cs$-algebra and requires no assumption that $S \ltimes X^{\mathbb N}$ is Hausdorff or satisfies the Baum--Connes conjecture.} One can therefore view the results of \cite{MS} as a demonstration of the kinds of computations one can do with independent resolutions.
\eex

\subsubsection{Garside monoids and their enveloping groups}
\label{sss:Garside}

Let us recall the definition of Garside monoids from \cite[Chapter~I, \S~2.1]{Deh15}.\footnote{We note that this particular notion of a Garside monoid is more specific than that of a monoid viewed as a small category equipped with a Garside family with the finiteness assumptions we take above. For example, monoids associated to self-similar group actions have non-trivial invertible elements, which are disallowed here by the length function $\lambda$.} A Garside monoid consists of a monoid $M$ which is cancellative, admits a function $\lambda \colon M \to \Nz$ such that $\lambda(xy) \geq \lambda(x) + \lambda(y)$ and $\lambda(x) > 0$ for all $x \in M \setminus\{1\}$, such that any two elements of $M$ have a left- and right-lcm and a left- and right-gcd, together with an element $\Delta \in M$ whose left- and right-divisors coincide and generate $M$, such that the set $D = \{ d \in M  \mid \Delta \in d M \} $ of divisors of $\Delta$ in $M$ is finite. Let $\Gamma$ be the enveloping group of $M$. Note that $\Gamma$ is given by the group of (left- or right-) fractions for $M$, i.e., $\Gamma = M^{-1} M = M M^{-1}$. 

Let $E_0$ be the semilattice $\Gamma \cup \gekl{0}$, where we identify an element $\gamma \in \Gamma$ with the subset $\gamma M$ of $\Gamma$ and the partial order is given by inclusion. To see that $E_0$ is indeed a semilattice, take $\gamma, \zeta \in \Gamma$, write $\gamma^{-1} \zeta = x^{-1} y$ for some $x, y \in M$ and let $z$ be the right-lcm of $x$ and $y$. Now we compute
\[
 (\gamma M) \cap (\zeta M) = \gamma (M \cap \gamma^{-1} \zeta M) = \gamma (M \cap x^{-1}y M) = \gamma x^{-1} ((x M) \cap (y M)) = \gamma x^{-1} z M \in E_0.
\]
$\Gamma$ acts on the semilattice $E_0$ by order automorphisms via left multiplication. Also consider the singleton space $\{*\}$ with trivial $\Gamma$-action. The trivial representation $U \colon E_0 \to \cO_c(\{*\})$ with $U(\gamma) = \{*\}$ for each $\gamma \in \Gamma$ is (trivially) $\Gamma$-equivariant and generating, but far from independent. However, $E_0$ admits the following thorough $\Gamma$-invariant system $\R$ of concrete finite covers with respect to $U$. Given $\gamma \in E_0$, the singleton $\{\gamma d\}$ is a finite cover of $e$ for each $d \in D$. We set 
\[ \R(\gamma) \coloneq \{ \{\gamma d\} \mid d \in D \} . \]
Let us first explain why condition (M) is satisfied. Given $\gamma \in \Gamma$, $d \in D$ and $x \in M$, we have to show that $(\gamma x M) \cap (\gamma d M) = \gamma x d' M$ for some $d' \in D$. Hence it suffices to show that for every $x \in M$ and $d \in D$, there exists $d' \in M$ such that $(xM) \cap (dM) = xd'M$. Every $x \in M$ is of the form $d_1 \dotsm d_n$ for some $d_m \in D$. Hence we can proceed inductively on $n$ . For $n=1$, as $D$ is closed under right-lcms, we know that $(d_1M) \cap (dM) = d_1 d'M$ for some $d' \in D$. For the induction step, assume that $x = y \ti{d}$. By induction hypothesis, $(yM) \cap (dM) = y d'' M$ for some $d'' \in D$. Moreover, by the $n=1$ case, we know that $(\ti{d}M) \cap (d''M) = \ti{d} d' M$ for some $d' \in D$. Thus, we conclude that $(xM) \cap (dM) = (y \ti{d} M) \cap (yM) \cap (dM) = (y \ti{d} M) \cap (y d'' M) = y( (\ti{d}M) \cap (d''M) ) = y \ti{d} d' M = x d' M$, as desired. 

To check that $\R$ is thorough, let $\gamma , \gamma_1, \dots , \gamma_n \in \Gamma$. For each $1 \leq i \leq n$, write $\gamma^{-1} \gamma_i = x_i y_i^{-1}$ for some $x_i, y_i \in M$. Let $x \in M$ be the right-lcm of $x_1,\dots,x_n$. Then we have, for each $1 \leq i \leq n$, $x M \subseteq x_i M \subseteq x_i y_i^{-1} M = \gamma^{-1} \gamma_i M$ and thus $\gamma x M \subseteq \gamma_i M$. As $D$ generates $M$, we may find $d_1,\dots,d_m \in D$ with $x = d_1 d_2 \dots d_m$. This determines a chain $\{\gamma d_1\} \in \R(\gamma), \{\gamma d_1 d_2 \} \in \R(\gamma d_1), \dots, \{\gamma x\} \in \R(\gamma d_1 \dots d_{m-1})$ witnessing thoroughness. 

Hence Theorem \ref{t:resolution} produces an independent resolution for $\Gamma$. Moreover, since there are no non-trivial invertible elements in $M$, all stabilizer groups $\Gamma_{e}$ are trivial for all $e \in E_0\reg$, so that the same is true for all other semilattices $E_j$ in our independent resolution. The semilattices $E_j$ look as follows:
\begin{align*}
 E_0 &= \Gamma \cup \gekl{0},\\
 E_1 &= \menge{(\gamma \mid D_1)}{\gamma \in \Gamma; \, \emptyset \ne D_1 \subseteq D} \cup \gekl{0},\\
 E_2 &= \menge{(\gamma \mid D_1 \mid D_2)}{\gamma \in \Gamma; \, \emptyset \ne D_1, D_2 \subseteq D \text{ disjoint}} \cup \gekl{0},\\
 &\dotso,\\
 E_j &= \menge{(\gamma \mid D_1 \mid \cdots \mid D_j)}{\gamma \in \Gamma; \, \emptyset \ne D_1, \dotsc, D_j \subseteq D \text{ disjoint}} \cup \gekl{0},\\
 &\dotso
\end{align*}
with a similar recursive definition as in Example~\ref{ex:HRG5}. Note that this is a finite length independent resolution, because the length is at most $\# \, D - 1$. The following is now a consequence of Corollary~\ref{cor:EquiBC} and Remark~\ref{rem:FLIndResSubgroups}.
\bcor
\label{cor:Garside_BC}
Let $\Gamma'$ be a subgroup of the enveloping group of a Garside monoid. Assume that $\Gamma'$ is exact. Then $\Gamma'$ satisfies the Baum--Connes conjecture with arbitrary coefficients if and only if the discretisation formula with arbitrary coefficients is valid for $\Gamma'$.
\ecor

Let us now present concrete examples. First, recall that we introduced Artin--Tits groups in \S~\ref{s:intro}.
\bex
\label{ex:Artin}
An Artin--Tits monoid is a Garside monoid in the sense of \cite[Chapter~I, \S~2.1]{Deh15} if and only it is spherical. The corresponding enveloping groups, spherical Artin--Tits groups, are special classes of Artin--Tits groups. Concrete examples are given by Braid monoids and Braid groups. Spherical Artin--Tits groups are linear by \cite{CW,Dig} and thus exact by \cite{GHW}. Hence Corollary~\ref{cor:Garside_BC} implies that for a subgroup $\Gamma'$ of a spherical Artin--Tits group, the Baum--Connes conjecture with coefficients is true for $\Gamma'$ if and only if the discretisation formula with coefficients is valid for $\Gamma'$.
\eex

\subsubsection{Groupoids corresponding to Stein's groups}
\label{sss:Stein}

This class of examples appears also in, for instance, \cite{Li15} (see also \cite{Tan}, \cite{Tanarxiv} and \cite[\S~2.2.7]{Li25}). Let $\Pi$ be a multiplicative subgroup of $\Rz_{>0}$ and $A_+$ be the monoid $A \cap [0,\infty)$ for some subgroup $A \subseteq (\Rz,+)$. Assume that $\Pi$ acts on $A_+$ by multiplication. Let $S$ be the left inverse hull of the $ax+b$-monoid $A_+ \rtimes \Pi$ and $E$ its semilattice of idempotents. We have a decomposition $\widehat{E} = \Omega \amalg \gekl{\infty}$, where $\Omega$ is an $S$-invariant open, non-compact subset such that $S \ltimes \Omega$ is minimal. We are interested in a restriction of the groupoid $S \ltimes \Omega$ to a non-empty compact open subset of $\Omega$; let us call it $G$. Its topological full group has been studied in \cite{Ste92} and can be viewed as generalizations of Thompson's group $V$, where we vary the conditions on slopes and breakpoints as well as the length of the underlying interval (choosing different lengths corresponds to restricting $S \ltimes \Omega$ to different compact open subsets). This description of Stein's groups as topological full groups has been worked out in \cite{Tan}, based on ideas in \cite{Li15}, and the reader may consult \cite[\S~7.2]{Tan} for details. The groupoid $G$ we are interested in is Morita equivalent to $S \ltimes \Omega$ because $S \ltimes \Omega$ is minimal. For this reason, if we are interested in groupoid homology of $G$, it suffices to study groupoid homology of $S \ltimes \Omega$. The sequence of groupoids $S \ltimes \Omega$, $S \ltimes \widehat{E}$ and $S \ltimes \gekl{\infty}$ can be viewed as an independent co-resolution for $S \ltimes \Omega$. Let us analyse the long exact sequence in groupoid homology arising from the decomposition $\widehat{E} = \Omega \amalg \gekl{\infty}$:
\begin{equation}
\label{e:LES1_Stein}
 \cdots \to \Hlgy_{*+1}(S \ltimes \gekl{\infty}) \to \Hlgy_*(S \ltimes \Omega) \to \Hlgy_*(S \ltimes \widehat{E}) \to \Hlgy_*(S \ltimes \gekl{\infty}) \to \Hlgy_{*-1}(S \ltimes \Omega) \to \cdots
\end{equation}
Groupoid homology of $\Hlgy_*(S \ltimes \widehat{E})$ is given by Corollary~\ref{cor:OSF_H} as follows:
\[
 \Hlgy_*(S \ltimes \widehat{E}) \cong \Hlgy_*(\Pi)
\]
because the action $S \acts E\reg$ only has one orbit, and the corresponding isotropy group is $\Pi$. The groupoid $S \ltimes \gekl{\infty}$ is already discrete, and we have an isomorphism $S \ltimes \gekl{\infty} \cong A \rtimes \Pi$. Plugging all this into \eqref{e:LES1_Stein}, and using the isomorphism $\Hlgy_*(G) \cong \Hlgy_*(S \ltimes \Omega)$ induced by the canonical inclusion $G \into S \ltimes \Omega$, we obtain the following long exact sequence:
\begin{equation}
\label{e:LES2_Stein}
 \cdots \to \Hlgy_{*+1}(A \rtimes \Pi) \to \Hlgy_*(G) \to \Hlgy_*(\Pi) \to \Hlgy_*(A \rtimes \Pi) \to \Hlgy_{*-1}(G) \to \cdots
\end{equation}
Moreover, it is straightforward to see that the map $\Hlgy_*(\Pi) \to \Hlgy_*(A \rtimes \Pi)$ appearing in \eqref{e:LES2_Stein} is induced by the canonical inclusion $\Pi \into A \rtimes \Pi$. Since the canonical projection $A \rtimes \Pi \onto \Pi$ is a split for the canonical inclusion $\Pi \into A \rtimes \Pi$, we obtain an isomorphism $\Hlgy_*(G) \cong \Hlgy_{*+1}(A \rtimes \Pi) / \Hlgy_{*+1}(\Pi)$. 
Now consider the Lyndon–Hochschild–Serre spectral sequence {(see e.g. \cite[\S~VII.6]{Brown})}
\[
 \Hlgy_i(\Pi,\Hlgy_j(A,\Zz)) \Rightarrow \Hlgy_{i+j}(A \rtimes \Pi).
\]
The canonical inclusion $\Pi \into A \rtimes \Pi$ and its split $A \rtimes \Pi \onto \Pi$ allow us to view $\Hlgy_*(\Pi)$ as a direct summand of $\Hlgy_*(A \rtimes \Pi)$, and functoriality of the Lyndon–Hochschild–Serre spectral sequence enables us to quotient by $\Hlgy_*(\Pi)$ at each level of the spectral sequence. All in all, we obtain a spectral sequence converging to $\Hlgy_*(G)$ of the form
\[
 \Hlgy_p(\Pi,\Hlgy_{q+1}(A,\Zz)) \Rightarrow \Hlgy_{p+q}(G).
\]
Therefore, in this example, our independent co-resolution allows us to compute the groupoid homology of $G$. Here is a direct consequence:
\bcor
\label{cor:ConcCompStein}
$ $
\setlength{\parindent}{0cm} \setlength{\parskip}{0cm}

\begin{enumerate}[label=(\roman*)]
    \item If $\Pi$ contains a number $\xi \in \Qz_{>0} \setminus \gekl{1}$, then $\Hlgy_*(G) \otimes_{\Zz} \Qz = 0$ for all $*$, and thus the corresponding topological full group $\bmF(G)$ is rationally acyclic.
    \item Assume that $\Pi$ contains a natural number $\nu > 1$, and for all $1 \leq r \leq \rk \, A \coloneq \dim_{\Qz} (A \otimes_{\Zz} \Qz)$, $A$ is $(\nu^r -1)$-divisible (i.e., multiplication by $(\nu^r -1)$ is an isomorphism on $A$), which is the case if $(\nu^r -1) \in \Pi$ for all $1 \leq r \leq \rk \, A$. Then $\Hlgy_*(G) = 0$ for all $*$, and thus the corresponding topological full group $\bmF(G)$ is integrally acyclic.
\end{enumerate}
\ecor
\setlength{\parindent}{0cm} \setlength{\parskip}{0cm}

\bproof
(i) It suffices to show that $\Hlgy_p(\Pi, \Hlgy_{q+1}(A,\Zz)) \otimes_{\Zz} \Qz = 0$ for all $p, q \geq 0$. As group homology commutes with filtered colimits, it suffices to treat the case where $\Pi$ is finitely generated. In that case, $\Pi$ is free abelian. By the Elementary Divisor Theorem, there exists $\pi \in \Pi$ such that $\Pi / \spkl{\pi}$ is free abelian and $\xi \in \spkl{\pi}$. Let $\xi_* \colon \Hlgy_{q+1}(A,\Zz) \otimes_{\Zz} \Qz \to \Hlgy_{q+1}(A,\Zz) \otimes_{\Zz} \Qz$ be the map induced by $\xi$. As $\xi \neq 1$, we know that $\id - \xi_*$ is an isomorphism on $\Hlgy_{q+1}(A,\Zz) \otimes_{\Zz} \Qz$. As $\xi \in \spkl{\pi}$, there exists an integer $\epsilon$, which we can assume to be positive (otherwise replace $\pi$ by $\pi^{-1}$), such that $\xi = \pi^{\epsilon}$. It follows that $\id - \xi_* = \id- (\pi_*)^{\epsilon} = (\id - \pi_*)((\pi_*)^0 + \dotso + (\pi_*)^{\epsilon - 1})$ on $\Hlgy_{q+1}(A,\Zz) \otimes_{\Zz} \Qz$. Since $\id - \xi_*$ is an isomorphism, it follows that $\id - \pi_*$ is an isomorphism. Hence $\Hlgy_n(\spkl{\pi}, \Hlgy_{q+1}(A,\Zz)) \otimes_{\Zz} \Qz = 0$ for all $n \geq 0$. Now the Lyndon–Hochschild–Serre spectral sequence 
\[
 \Hlgy_m(\Pi/\spkl{\pi}, \Hlgy_n(\spkl{\pi}, \Hlgy_{q+1}(A,\Zz))) \Rightarrow \Hlgy_{m+n}(\Pi, \Hlgy_{q+1}(A,\Zz))
\]
implies that $\Hlgy_p(\Pi, \Hlgy_{q+1}(A,\Zz)) \otimes_{\Zz} \Qz = 0$ for all $p, q \geq 0$, as desired. The statement about $\bmF(G)$ follows from \cite[Corollary~C]{Li25}.
\setlength{\parindent}{0cm} \setlength{\parskip}{0.5cm}

(ii) It suffices to show that $\Hlgy_p(\Pi, \Hlgy_{q+1}(A,\Zz)) = 0$ for all $p, q \geq 0$. The proof is similar to the one in (i). The only difference is that, if $\nu_* \colon \Hlgy_{q+1}(A,\Zz) \to \Hlgy_{q+1}(A,\Zz)$ is the map induced by $\nu$, then our assumptions on $A$ imply that $\id - \nu_*$ is an isomorphism on $\Hlgy_{q+1}(A,\Zz)$. In other words, there is no need to rationalize. The statement about $\bmF(G)$ follows from \cite[Corollary~D]{Li25} because $G$ is purely infinite minimal and the unit space of $G$ is homeomorphic to the Cantor space (see \cite[\S~7]{Tan}, in particular \cite[Lemmas~7.2.4 and 7.2.7]{Tan}).
\eproof
\setlength{\parindent}{0cm} \setlength{\parskip}{0.5cm}

\appendix

\section{\texorpdfstring{Equivariant étale correspondences to equivariant $\cs$-correspondences}{Equivariant étale correspondences to equivariant C*-correspondences}}
\label{correspondence functor}

For K-theoretic considerations we shall heavily utilise the construction of a $\cs$-correspondence $\cs(\Omega) \colon \cs(G) \to \cs(H)$ from an étale correspondence $\Omega \colon G \to H$. In particular, we need a version of this that works equivariantly with respect to actions of a fixed groupoid, for which we must check compatibility with semidirect product groupoids and crossed products.

We follow the treatment from \cite{AKM22}.\footnote{The reader should be warned that we use the opposite convention to \cite{AKM22} on the direction $G \to H$ of an étale correspondence.} Recall that for étale groupoids $G$ and $H$, an \emph{étale correspondence} $\Omega \colon G \to H$ is a $G$-$H$ bispace $\Omega$ such that the right action $\Omega \rightacts H$ is free, proper and étale. The étale correspondence is called \emph{proper} if the induced map $\Omega/H \to G^0$ is proper. The \emph{composition} $\Lambda \circ \Omega \colon H \to K$ of $\Omega \colon G \to H$ and $\Lambda \colon H \to K$ is given by the $G$-$K$ bispace $\Omega \times_H \Lambda$, which is the quotient of $\Omega \times_{H^0} \Lambda$ after identifying $(\omega . h,\lambda)$ with $(\omega, h . \lambda)$ whenever this makes sense. Composition is associative up to canonical isomorphisms and for each $G$, there is an identity correspondence whose $G$-$G$ biset is $G$ under left and right multiplication, which performs as the identity morphism up to canonical isomorphisms. All in all, this forms the structure of a bicategory. In this paper, we have no need for $2$-categorical considerations, as it suffices to consider the ordinary category whose morphisms are isomorphism classes of étale correspondences. For an étale correspondence $\Omega \colon G \to H$ with range and source anchors $\rho \colon \Omega \to G^0$ and $\sigma \colon \Omega \to H^0$, the $\cs$-correspondence $\cs(\Omega) \colon \cs(G) \to \cs(H)$ is constructed as a completion of $C_c(\Omega)$\footnote{For a locally compact locally Hausdorff space $X$, we abuse notation and write $C_c(X)$ for the set of functions $\mathop{\mathrm{span}} \{ f \colon X \to \mathbb C \mid \exists U \subseteq X \text{ open, Hausdorff with} f = 0 \text{ outside } U \text{ and } f|_U \in C_c(U)\}$, which need not be continuous.} with the following operations
\begin{align*}
C_c(G) \times C_c(\Omega) & \to C_c(\Omega) \\
(a,\xi) & \mapsto a . \xi \colon \omega \mapsto \sum_{g \in G_{\rho(\omega)}} a(g^{-1}) \xi(g . \omega),  \\
C_c(\Omega) \times C_c(H) & \to C_c(\Omega) \\
(\xi,b) & \mapsto \xi . b \colon \omega \mapsto \sum_{h \in H^{\sigma(\omega)}} \xi(\omega . h) b(h^{-1}),  \\
C_c(\Omega) \times C_c(\Omega) & \to C_c(H) \\
(\eta,\xi) & \mapsto \langle \eta, \xi \rangle \colon h \mapsto \sum_{\omega \in \Omega_{r(h)}} \overline{\eta(\omega)} \xi(\omega . h). 
\end{align*}
Proper étale correspondences induce proper $\cs$-correspondences, which induce maps in (K)K-theory. We will often use étale correspondences as models for these maps in (K)K-theory. We also get a $\cs$-correspondence at the reduced level when the left action is free, and therefore for any \'etale correspondence built from an inverse semigroup equivariant topological correspondence:

\bprop\label{free and reduced}
Let $\Omega \colon G \to H$ be an \'etale groupoid correspondence and suppose that $G \acts \Omega$ is free. Then $\cs(\Omega) \colon \cs(G) \to \cs(H)$ descends to the reduced level $\csr(\Omega) \colon \csr(G) \to \csr(H)$.
\eprop
\setlength{\parindent}{0cm} \setlength{\parskip}{0cm}

\begin{proof}
Let $\csr(\Omega)$ denote the Hilbert $\csr(H)$-module obtained from the Hilbert $\cs(H)$-module $\cs(\Omega)$, and let $\|-\|_\lambda$ denote the resulting seminorm on $\cs(\Omega)$. We must check that the operator norm of elements of $\cs(G)$ acting on $\csr(\Omega)$ is bounded by the norm in $\csr(G)$.
\setlength{\parindent}{0cm} \setlength{\parskip}{0.5cm}

Let $a \in C_c(G) \subseteq \cs(G)$ be positive, and let $\xi \in C_c(\Omega)$, $y \in H^0$ and $v \in C_c(H_y)$. We want to bound $\langle v, \langle \xi, a \xi \rangle v \rangle$ by $\|a\|_\lambda \|\xi\|_\lambda^2\|v\|_2^2$. Let $T \subseteq \Omega_y$ be a transversal for $G \backslash \Omega_y$, so that $G \times_{G^0} T \to \Omega_y \colon (g,t) \mapsto g . t$ is a bijection. For each $t \in T$ define a vector 
\begin{align*}
w_t & \in C_c(G_{\rho(t)}) \\
g & \mapsto \sum_{h \in H_y} \xi(g . t . h^{-1}) v(h).
\end{align*}
Then by construction we have 
\[ \langle v, \langle \xi, a \xi \rangle v \rangle = \sum_{t \in T} \langle w_t , a w_t \rangle \leq  \|a\|_\lambda \sum_{t \in T} \langle w_t , w_t \rangle = \|a\|_\lambda \langle v, \langle \xi , \xi \rangle v \rangle \leq \|a\|_\lambda \|\xi\|_\lambda^2\|v\|_2^2. \qedhere \]
\end{proof}
\setlength{\parindent}{0cm} \setlength{\parskip}{0.5cm}

We also need to utilise étale correspondences as models for maps in equivariant KK-theory, that is, morphisms in the equivariant Kasparov category $\KK^G$ for some étale groupoid $G$. For this, we need to consider étale groupoids and correspondences equipped with actions of $G$. We will show that the above construction, when applied to $G$-equivariant étale correspondences, produces $G$-equivariant $\cs$-correspondences.

\begin{definition}
Let $G$ and $K$ be étale groupoids. A left action of $G$ on $K$ as a groupoid is a left action of $G$ on the underlying space $K$ such that the unit space $K^0$ is $G$-invariant, the range and source maps $r,s \colon K \to K^0$ are $G$-equivariant, and the multiplication map $K^2 \to K$ is $G$-equivariant with respect to the diagonal action on $K^2$ (which makes sense because the range and source maps are $G$-equivariant). We say $K$ is a \emph{(left) $G$-groupoid}. If $J$ and $K$ are $G$-groupoids, a \emph{$G$-equivariant étale correspondence} $\Omega \colon J \to K$ is an étale correspondence $\Omega \colon J \to K$ equipped with an action of $G$ whose range and source maps $\rho \colon \Omega \to J^0$, $\sigma \colon \Omega \to K^0$ are $G$-equivariant, and such that the action maps $J \times_{J^0} \Omega \to \Omega$ and $\Omega \times_{K^0} K \to \Omega$ are $G$-equivariant with respect to the resulting diagonal actions on $J \times_{J^0} \Omega$ and $\Omega \times_{K^0} K$.
\end{definition}

The $\cs$-algebra $\cs(K)$ of an étale $G$-groupoid $K$ inherits the structure of a $G$-$\cs$-algebra as follows. Let $\tau \colon K \to G^0$ denote the anchor map of the action. There is an action of $C_0(G^0)$ on $\cs(K)$ via the composition 
\[ C_0(G^0) \to M(C_0(K^0)) \to M(\cs(K)) \]
which satisfies $f . \xi \in C_c(K)$ for $f \in C_0(G^0)$ and $\xi \in C_c(K)$ with  $(f . \xi)(k) = f(\tau(k)) \xi(k)$. The image is contained in the centre of $M(\cs(K))$ because $\tau \circ r = \tau = \tau \circ s \colon K \to G^0$, giving $\cs(K)$ the structure of a $C_0(G^0)$-algebra. For each $x \in G^0$, the preimage $\tau^{-1}(x) \subseteq K$ is the restriction of $K$ to the closed invariant subset $\tau^{-1}(x) \cap K^0$. The kernel of the quotient map $\cs(K) \to \cs(\tau^{-1}(x))$ induced by restriction of functions from $K$ to $\tau^{-1}(x)$ may be identified with $\cs(K \setminus \tau^{-1}(x))$. It follows that $\cs(\tau^{-1}(x))$ is the fibre of $\cs(K)$ at $x$ as a $C_0(G^0)$-algebra.

\begin{lemma}\label{continuity lemma}
Let $X$ be a locally compact Hausdorff space, let $\mathcal A \to X$ be a Banach bundle and let $Z$ be a space with two local homeomorphisms $r,s \colon Z \to X$. Suppose $\Gamma \subseteq \Gamma_0(X, \mathcal A)$ has dense span. Let $(z,a) \mapsto (z,z . a) \colon s^* \mathcal A \to r^* \mathcal A$ be a function which is linear and contractive on each fibre over $Z$. If for each $\xi \in \Gamma$, the map $g \mapsto g . \xi_{s(g)} \colon G \to \mathcal A$ is continuous, then $(z,a) \mapsto (z,z . a)$ is continuous.  
\end{lemma}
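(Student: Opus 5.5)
The plan is to verify continuity along nets, using the standard approximation criterion for convergence in a Banach bundle (Fell--Doran, II.13.12). The criterion says the following. Let $p \colon \mathcal A \to X$ be the bundle projection, and let $(b_i)$ be a net in $\mathcal A$ and $b \in \mathcal A$ with $p(b_i) \to p(b)$. Suppose that for every $\varepsilon > 0$ there is a net $c_i \to c$ in $\mathcal A$ with $\|b - c\| < \varepsilon$ and, eventually, $\|b_i - c_i\| < \varepsilon$. Then $b_i \to b$. I read the hypothesis of the lemma as concerning the map $z \mapsto z . \xi_{s(z)}$ from $Z$ to $\mathcal A$; the $G$ written there should be $Z$.

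Fix a convergent net $(z_i, a_i) \to (z, a)$ in $s^*\mathcal A$, so that $z_i \to z$ in $Z$ and $a_i \to a$ in $\mathcal A$ with $p(a_i) = s(z_i)$. Since $r$ is continuous, $p(z_i . a_i) = r(z_i) \to r(z) = p(z . a)$. By the definition of the pullback topology on $r^*\mathcal A$, it therefore suffices to show that $z_i . a_i \to z . a$ in $\mathcal A$.

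Let $\varepsilon > 0$. Because the span of $\Gamma$ is dense in $\Gamma_0(X, \mathcal A)$, and every element of a fibre extends to a section in $\Gamma_0(X,\mathcal A)$, the evaluations of $\mathrm{span}\,\Gamma$ at $s(z)$ are dense in the fibre $\mathcal A_{s(z)}$. So I choose $\xi = \sum_j \lambda_j \xi_j$, with $\xi_j \in \Gamma$, such that $\|a - \xi_{s(z)}\| < \varepsilon$.

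Next I use that $b \mapsto \|b - \xi_{p(b)}\|$ is continuous on $\mathcal A$, since subtraction is continuous on $\mathcal A \times_X \mathcal A$ and $\xi$ is a continuous section. Hence $\|a_i - \xi_{s(z_i)}\| < \varepsilon$ eventually.

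Now set $c_i = z_i . \xi_{s(z_i)}$ and $c = z . \xi_{s(z)}$. By fibrewise linearity, $c_i = \sum_j \lambda_j \, z_i . (\xi_j)_{s(z_i)}$. Each summand converges to $z . (\xi_j)_{s(z)}$ by the hypothesis, and all summands lie over the common base point $r(z_i)$. By continuity of scalar multiplication and addition in $\mathcal A$, it follows that $c_i \to c$.

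Fibrewise contractivity then gives
\[
\|z_i . a_i - c_i\| \leq \|a_i - \xi_{s(z_i)}\| < \varepsilon
\quad\text{and}\quad
\|z . a - c\| \leq \|a - \xi_{s(z)}\| < \varepsilon.
\]
The criterion therefore yields $z_i . a_i \to z . a$, which proves the lemma.

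The only delicate step is the use of the approximation criterion for Banach bundles. If a self-contained argument is preferred, it can be proved directly from the tube neighbourhoods $\{ b : p(b) \in V,\ \|b - \eta_{p(b)}\| < \delta \}$, where $\eta$ ranges over continuous sections; these form a basis of the topology of $\mathcal A$.
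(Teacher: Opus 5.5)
Your proof is correct and follows essentially the same route as the paper. Both arguments approximate $a$ by $\xi_{s(z)}$ with $\xi\in\operatorname{span}\Gamma$, use the hypothesis to get $z_i.\xi_{s(z_i)}\to z.\xi_{s(z)}$, use fibrewise contractivity to keep these within $\varepsilon$ of $z_i.a_i$, and conclude with the approximation criterion (the paper cites Williams, Prop.~C.20, which is the upper semicontinuous version of Fell--Doran II.13.12). The paper also first localises so that $Z$ can be assumed locally compact Hausdorff, which your net argument does not need; and in the upper semicontinuous bundles coming from $C_0(X)$-algebras, the map $b\mapsto\|b-\xi_{p(b)}\|$ is only upper semicontinuous, which is all your step actually uses.
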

\setlength{\parindent}{0cm} \setlength{\parskip}{0cm}

\begin{proof}
Let us first reduce to the case with $Z$ locally compact and Hausdorff. For $z \in Z$ and $a \in A_{s(z)}$, pick some open neighbourhood $O$ of $z$ on which $r$ and $s$ are injective. Then $s|_O^* \mathcal A$ is an open set containing $(z,a) \in s^* \mathcal A$, and the image lands in the open set $r|_O^* \mathcal A \subseteq r^* \mathcal A$. Since $O$ is locally compact and Hausdorff, this reduces the statement, and thus we assume $Z$ to be locally compact and Hausdorff. Suppose we have a convergent net $(z_i,a_i) \to (z,a) \in s^* \mathcal A$ and let $\epsilon > 0$. Let $\xi \in \mathop{\mathrm{span}} \Gamma$ with $\lVert \xi_{s(z)} - a \rVert < \epsilon$. Then by assumption we have $z_i . \xi_{s(z_i)} \to z . \xi_{s(z)}$. By upper semicontinuity of the norm we have $\lVert \xi_{s(z_i)} - a_i \rVert < \epsilon$ for sufficiently large $i$, and hence also $\lVert z_i . \xi_{s(z_i)} - z_i . a_i \rVert < \epsilon$. It follows by \cite[Proposition C.20]{Wil07} that $z_i . a_i \to z . a$.
\end{proof}
\setlength{\parindent}{0cm} \setlength{\parskip}{0.5cm}

\begin{lemma}\label{continuity of action lemma}
Let $G$ be an étale groupoid and $K$ an étale $G$-groupoid with anchor $\tau \colon K \to G^0$. Let $A = \cs(K)$ be the $C_0(G^0)$-algebra as described above with associated bundle $\mathcal A \to G^0$. Consider for $g \in G$ the $*$-isomorphism $\alpha_g \colon A_{s(g)} \to A_{r(g)}$ induced by the groupoid isomorphism $k \mapsto g . k \colon \tau^{-1}(s(g)) \to \tau^{-1}(r(g))$. The resulting action map $s_G^* \mathcal A \to r_G^* \mathcal A$ gives $A$ the structure of a $G$-$\cs$-algebra.
\end{lemma}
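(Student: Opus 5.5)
The plan is to check directly the definition of a $G$-$\cs$-algebra. We need three things: a $C_0(G^0)$-algebra structure on $A$, which is already in place; fibrewise $*$-isomorphisms $\alpha_g \colon A_{s(g)} \to A_{r(g)}$ satisfying $\alpha_{gh} = \alpha_g \alpha_h$ and $\alpha_x = \id$ for units $x$; and continuity of the induced map $s_G^* \mathcal A \to r_G^* \mathcal A$.

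\textbf{Well-definedness and the algebraic axioms.} The axioms for a $G$-groupoid give that, for fixed $g$, the map $k \mapsto g . k$ is a bijection $\tau^{-1}(s(g)) \to \tau^{-1}(r(g))$. It preserves the unit space, intertwines range and source, and respects multiplication, so it is a groupoid isomorphism. It is a homeomorphism because it is the restriction of the continuous action map, and its inverse is given by $g^{-1}$. An isomorphism of étale groupoids induces a $*$-isomorphism of full $\cs$-algebras by $\xi \mapsto \xi \circ (g^{-1} \cdot)$ on $C_c$, and we set $\alpha_g$ to be this map. Since $\tau^{-1}(x)$ is the fibre $A_x$, as recorded just before the statement, the fibres match. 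The identities $\alpha_{gh} = \alpha_g\alpha_h$ and $\alpha_x = \id$ follow from the corresponding identities for the action on $K$. Each $\alpha_g$ is linear and isometric on fibres, hence contractive.

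\textbf{Continuity.} This is the main step, and it goes through Lemma~\ref{continuity lemma} with $X = G^0$, $Z = G$, $r = r_G$ and $s = s_G$. Take $\Gamma$ to be the image of the set of $\xi \in C_c(K)$ supported in a single open bisection $V \subseteq K$ with $V$ Hausdorff; this spans $C_c(K)$ and so is dense in $A$. For such a $\xi$ we must show that $g \mapsto \alpha_g(\xi_{s(g)}) \in \mathcal A$ is continuous. Fix $g_0$. The action map $G \times_{G^0} K \to K$ is a local homeomorphism onto its image on suitable pieces, being étale in the second variable. So for $g$ in a small bisection $B \ni g_0$, the element $\alpha_g(\xi_{s(g)})$ is the restriction to $\tau^{-1}(r(g))$ of the function
\[
\eta \colon k \mapsto \xi\big(\tilde g(\tau(k))^{-1} . k\big),
\]
where $\tilde g \colon r(B) \to B$ is the local inverse of $r$. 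I expect $\eta$ to lie in $C_c(K)$, at least after cutting down with a function $f \in C_c(G^0)$ equal to $1$ near $r(g_0)$. This requires checking that $k \mapsto \tilde g(\tau(k))^{-1} . k$ is a homeomorphism of $\tau^{-1}(r(B))$ onto $\tau^{-1}(s(B))$, which follows from continuity of the action and of $\tau$.

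Once this is established, $g \mapsto \alpha_g(\xi_{s(g)})$ agrees near $g_0$ with $g \mapsto (f\eta)_{r(g)}$. This is a section $r(g) \mapsto (f\eta)_{r(g)}$ of the upper semicontinuous bundle composed with $r$, and so it is continuous. Lemma~\ref{continuity lemma} then gives continuity of the action map $s_G^*\mathcal A \to r_G^*\mathcal A$, and $A$ is a $G$-$\cs$-algebra.

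The obstacle is the non-Hausdorff bookkeeping. The function $\eta$ must lie in the generalised $C_c(K)$, meaning it is supported in a Hausdorff open set, and this is why we choose $B$ small enough that its action carries $V$ onto a bisection.
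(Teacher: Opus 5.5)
Your proposal is correct and follows the paper's proof. Both reduce, via Lemma~\ref{continuity lemma}, to continuity of $g \mapsto \alpha_g(\xi_{s(g)})$, and then realise these elements near $g_0$ as the fibres over $r(g)$ of a single element of $C_c(K)$; your $f\eta$ is exactly the paper's $\gamma . \xi$ with $\gamma = f \circ r$ on the bisection. Your extra care with Hausdorff bisections and the algebraic axioms is harmless, though any open bisection $B \ni g_0$ already works, so no shrinking is needed.
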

\setlength{\parindent}{0cm} \setlength{\parskip}{0cm}

\begin{proof}
We need only check continuity of the action map. By Lemma \ref{continuity lemma} (see also \cite[Lemma 3.9]{Bön20}, \cite[Lemma 2.22]{Mil1}) it suffices to check that $g \mapsto \alpha_g(\xi_{s(g)}) \colon G \to \mathcal A$ is continuous for each $\xi \in C_c(K) \subseteq A$. The function $\alpha_g(\xi_{s(g)}) \in C_c(\tau^{-1}(r(g)))$ is given at $k \in \tau^{-1}(r(g))$ by $\xi(g^{-1} . k)$. Fix $g_0 \in G$ and let $U \subseteq G$ be an open bisection containing $g_0$. Pick $\gamma \in C_c(U)$ with $\gamma = 1$ on an open neighbourhood $V \subseteq U$ of $g_0$. The function
\begin{align*}
\gamma . \xi \colon K & \to \mathbb C \\
k & \mapsto \sum_{g \in G_{\rho(k)}} \gamma(g) \xi(g^{-1} . k)
\end{align*}
is continuous and compactly supported. For $g \in V$ and $k \in \tau^{-1}(r(g))$, we compute $(\gamma . \xi)(k) = \xi(g^{-1} . k)$, which means that $(\gamma . \xi)_{s(g)} = \alpha_g(\xi_{s(g)})$, and therefore $g \mapsto \alpha_g(\xi_{s(g)})$ is continuous on $V$.
\end{proof}
\setlength{\parindent}{0cm} \setlength{\parskip}{0.5cm}

Let $G$ be étale and let $\Omega \colon J \to K$ be a $G$-equivariant étale correspondence with anchor maps $\tau_J \colon J \to G^0$, $\tau_\Omega \colon \Omega \to G^0$ and $\tau_K \colon K \to G^0$. The $C_0(G^0)$-algebra structure on $\cs(K)$ induces the structure of a locally $C_0(X)$-convex Banach space on $\cs(\Omega)$, satisfying $f . \xi \in C_c(\Omega)$ for $f \in C_0(G^0)$ and $\xi \in C_c(\Omega)$, where $(f . \xi)(\omega) = f(\tau_\Omega(\omega)) \xi(\omega)$ at $\omega \in \Omega$. For $x \in G^0$, we note that $\tau_\Omega^{-1}(x) \colon \tau_J^{-1}(x) \to \tau_K^{-1}(x)$ is itself an étale correspondence, and the fibre of $\cs(\Omega)$ at $x$ is identified with the Hilbert $\cs(\tau_K^{-1}(x))$-module $\cs(\tau_\Omega^{-1}(x))$ via the restriction map $C_c(\Omega) \to C_c(\tau_\Omega^{-1}(x))$. For $g \in G$ the homeomorphism $\omega \mapsto g . \omega \colon \tau_{\Omega}^{-1}(s(g)) \to \tau_{\Omega}^{-1}(r(g))$ is compatible with the associated isomorphism $\tau_K^{-1}(s(g)) \to \tau_K^{-1}(r(g))$ and thus induces an isometric isomorphism $\beta_g \colon \cs(\tau_\Omega^{-1}(s(g))) \to \cs(\tau_\Omega^{-1}(r(g)))$. By exactly the same argument as in Lemma \ref{continuity of action lemma}, this gives a continuous action of $G$ on the Banach bundle associated to $\cs(\Omega)$, which by construction turns $\cs(\Omega)$ into a $G$-Hilbert $\cs(K)$-module. Moreover, it is clear that the action of $\cs(J)$ on $\cs(\Omega)$ is $G$-equivariant. 

In summary, we have constructed a $G$-equivariant $\cs$-correspondence $\cs(\Omega) \colon \cs(J) \to \cs(K)$ from the $G$-equivariant \'etale correspondence $\Omega \colon J \to K$. We note moreover that the canonical isomorphisms of $\cs$-correspondences witnessing compatibility of the $\Omega \mapsto \cs(\Omega)$ construction with composition and identities \cite[Theorem 7.13]{AKM22} are evidently $G$-equivariant. We have proved:

\begin{proposition}\label{prop:G equivariant correspondence functor}
The construction of a $G$-equivariant $\cs$-correspondence $\cs(\Omega) \colon \cs(J) \to \cs(K)$ from a $G$-equivariant étale correspondence $\Omega \colon J \to K$ described above respects composition and identities up to $G$-equivariant isomorphism.
\end{proposition}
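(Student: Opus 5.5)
We have to prove Proposition~\ref{prop:G equivariant correspondence functor}: the assignment $\Omega \mapsto \cs(\Omega)$ on $G$-equivariant étale correspondences respects composition and identities up to $G$-equivariant isomorphism. The plan is to take the non-equivariant statement \cite[Theorem 7.13]{AKM22} as given and check that the isomorphisms it provides intertwine the $G$-actions constructed above. Everything is then verified fibrewise over $G^0$ together with compatibility with the maps $\beta_g$.

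\emph{Composition.} Let $\Omega \colon J \to K$ and $\Lambda \colon K \to M$ be $G$-equivariant. First, $\Omega \times_K \Lambda$ is a $G$-equivariant étale correspondence under the diagonal action $g.[\omega,\lambda] = [g.\omega, g.\lambda]$. This action is well defined because the action maps $\Omega \times_{K^0} K \to \Omega$ and $K \times_{K^0} \Lambda \to \Lambda$ are $G$-equivariant, so the relation $(\omega.k,\lambda)\sim(\omega,k.\lambda)$ is preserved.

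The isomorphism $U \colon \cs(\Omega) \otimes_{\cs(K)} \cs(\Lambda) \to \cs(\Omega \times_K \Lambda)$ from \cite{AKM22} is given on $C_c$ by the convolution formula
\[ U(\xi \otimes \eta)[\omega,\lambda] = \sum_{k \in K^{\sigma(\omega)}} \xi(\omega.k)\,\eta(k^{-1}.\lambda). \]
We check two things about $U$:
\begin{enumerate}[label=(\roman*)]
\item $U$ is $C_0(G^0)$-linear. Multiplying $\xi$ by $f\circ\tau_\Omega$ corresponds to multiplying the image by $f\circ\tau_{\Omega\times_K\Lambda}$, since the anchors agree along the sum. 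Hence $U$ descends to the fibres at each $x \in G^0$, where it becomes the \cite{AKM22} isomorphism for the fibre correspondences $\tau^{-1}(x)$. This uses the identification of the fibres of internal tensor products of $C_0(G^0)$-correspondences with tensor products of fibres.
\item On fibres, $U$ intertwines $\beta^\Omega_g \otimes \beta^\Lambda_g$ with $\beta^{\Omega\times_K\Lambda}_g$. This follows by substituting $\omega \mapsto g^{-1}.\omega$ and $\lambda \mapsto g^{-1}.\lambda$ in the formula, and reindexing the sum via $k \mapsto g^{-1}.k$, which is a bijection $K^{\sigma(\omega)} \to K^{\sigma(g^{-1}\omega)}$ by equivariance of the range map.
\end{enumerate}

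\emph{Identities.} For the identity correspondence $K \colon K \to K$, with $G$ acting by the given action on $K$, the isomorphism $\cs(K) \cong \cs(K)$ as a Hilbert module is the identity on $C_c(K)$. It is therefore trivially equivariant, since both $G$-actions are induced by $k \mapsto g.k$.

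The main obstacle is (i): making sure that the fibre of the tensor product at $x$ really is the tensor product of the fibres, so that equivariance can be checked pointwise. I would handle this by checking equivariance instead on the dense subspace of $C_c$ functions, using continuity of the action in the form of Lemma~\ref{continuity lemma}. Two elements of a $G$-Hilbert module are compared through their sections $g \mapsto \beta_g(\xi_{s(g)})$, and these are determined by their values on a dense set, so agreement on $C_c$ functions suffices.
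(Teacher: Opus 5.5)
Your proposal is correct and takes the same route as the paper. The paper invokes the canonical isomorphisms of \cite[Theorem 7.13]{AKM22} for composition and identities and simply remarks that they are evidently $G$-equivariant; your fibrewise check, via $C_0(G^0)$-linearity, the convolution formula and the reindexing $k \mapsto g^{-1}.k$, is an explicit verification of that equivariance.
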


We take the chance here to highlight a particular special case of the above construction. Let $G$ be an étale groupoid and let $X$ and $Y$ be locally compact Hausdorff $G$-spaces. A \emph{$G$-equivariant topological correspondence} $Z \colon X \to Y$ is a locally compact Hausdorff $G$-space $Z$ along with a $G$-equivariant local homeomorphism $\sigma \colon Z \to Y$ and a $G$-equivariant map $\rho \colon Z \to X$. It is proper if $\rho$ is proper. This is precisely a $G$-equivariant étale correspondence if we consider $X$ and $Y$ as $G$-groupoids. We obtain a $G$-equivariant $\cs$-correspondence $\cs(Z) \colon C_0(X) \to C_0(Y)$. In \cite{PY25}, $Z$ is called a \emph{span} and $Z \mapsto \cs(Z)$ \emph{Atiyah transfer}.

Let $G$ be an étale groupoid and let $K$ be an étale $G$-groupoid. Then $G \ltimes K$ carries the structure of an étale groupoid with unit space $K^0$, with $s(g,k) = s(k)$, $r(g,k) = r(g . k)$ and multiplication given by $(g_1.k_1)(g_2,k_2) = (g_1 g_2, (g_2^{-1} . k_1) k_2)$ for composable pairs.

\begin{proposition}\label{crossed product identification}
Let $G$ be an étale groupoid and let $K$ be an étale $G$-groupoid. Write $\mathcal A \to G^0$ for the $G$-Banach bundle associated to $\cs(K)$. Then there is a $*$-isomorphism $\Phi \colon \cs(G \ltimes K) \to G \ltimes \cs(K)$ given at $\xi \in C_c(G \ltimes K)$ by 
\begin{align*}
\Phi \colon C_c(G \ltimes K) & \to \Gamma_c(G,s^* \mathcal A) \\
\xi & \mapsto [g \mapsto [k \mapsto \xi(g,k)]]
\end{align*}
\end{proposition}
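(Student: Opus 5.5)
The plan is to show that $\Phi$ is a $*$-homomorphism on the dense $*$-subalgebras $C_c(G \ltimes K)$ and $\Gamma_c(G, s^*\mathcal A)$, that it is bijective there, and that it is isometric for the universal norms. First I identify $\Gamma_c(G,s^*\mathcal A)$ concretely. The fibre $\mathcal A_{s(g)} = \cs(\tau^{-1}(s(g)))$ contains $C_c(\tau^{-1}(s(g)))$ densely, so for $\xi \in C_c(G\ltimes K)$ the assignment $g \mapsto \xi(g,\cdot)|_{\tau^{-1}(s(g))}$ gives a section. I will check it is continuous by the argument of Lemma~\ref{continuity of action lemma}: locally on an open bisection $U \subseteq G$, the function $\xi$ restricted to $U \times_{G^0} K$ is a compactly supported function on an open subset of $K$ after translating along $U$. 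The sections of this form span a subspace of $\Gamma_c(G,s^*\mathcal A)$ that is dense in the inductive limit topology, because $C_c(K)$ is dense in each fibre and we can use partitions of unity over bisections. Injectivity of $\Phi$ is immediate, since $(g,k) \mapsto \xi(g,k)$ is recovered pointwise.

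Next I verify the algebraic identities. In $G \ltimes \cs(K)$, multiplication is given by $(f_1 * f_2)(g) = \sum_{g_1 g_2 = g} f_1(g_1)\,\alpha_{g_1}(f_2(g_2))$, and involution by $f^*(g) = \alpha_g(f(g^{-1})^*)$. I expand these formulas using the convolution and involution in each fibre algebra $C_c(\tau^{-1}(x))$ together with $\alpha_g(\eta)(k) = \eta(g^{-1}.k)$, and then compare them with convolution in $G \ltimes K$. The multiplication there is $(g_1,k_1)(g_2,k_2) = (g_1g_2,(g_2^{-1}.k_1)k_2)$, and the inverse is $(g,k)^{-1} = (g^{-1}, g.k^{-1})$. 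The point is that a factorisation $(g,k) = (g_1,k_1)(g_2,k_2)$ corresponds bijectively to a factorisation $g = g_1g_2$ together with a factorisation of $k$ in $K$ after transport by $g_2$. This bookkeeping should give $\Phi(\xi_1 * \xi_2) = \Phi(\xi_1)*\Phi(\xi_2)$ and $\Phi(\xi^*) = \Phi(\xi)^*$.

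For surjectivity up to completion, and for isometry, I intend to use universal properties rather than direct norm estimates. Since $\Phi(C_c(G\ltimes K))$ is dense in $\Gamma_c(G,s^*\mathcal A)$ for the inductive limit topology, it is dense in $G\ltimes\cs(K)$. It remains to show that the universal norms agree. In one direction, every representation of $G \ltimes \cs(K)$ is a covariant pair $(\pi, u)$ in Renault's disintegration sense, so it restricts to $C_c(G\ltimes K)$. The restriction is a $*$-representation bounded in the inductive limit topology, so by Renault's disintegration theorem for étale groupoids it extends to $\cs(G\ltimes K)$. This gives $\|\Phi(\xi)\| \le \|\xi\|$. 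Conversely, a representation of $\cs(G\ltimes K)$ restricts to $\cs(K)$, via the open subgroupoid $K \cong G^0\ltimes K \subseteq G\ltimes K$, and to the bisection action of $G$. Together these form a covariant representation of $(\cs(K),G)$ that integrates, through $\Phi$, to the original representation. This gives the reverse inequality.

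I expect the main obstacle to be this last step: producing the covariant representation from a representation of $\cs(G\ltimes K)$ when $G$ is only étale, possibly non-Hausdorff, with no global unitaries available. To handle this I would first try the bisection picture, sending each open bisection $U \subseteq G$ to the partial isometry obtained by multiplying by functions supported on $U \times_{G^0} K^0$. I would then check that this yields a representation of the inverse semigroup of bisections that is covariant with $\cs(K)$, and appeal to the known equivalence between crossed products by étale groupoids and crossed products by inverse semigroup actions.
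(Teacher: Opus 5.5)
Your outline works, but in the hard direction ($\|\xi\|\le\|\Phi(\xi)\|$) you take a genuinely different and heavier route than the paper.

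The paper never constructs covariant pairs or partial isometries. Its key step is an isometry on each bisection. For an open bisection $U\subseteq G$ and $\xi\in C_c(U;K)$, the element $\xi^*\xi$ is supported in $G^0\ltimes K\cong K$, and $\Phi(\xi)^*\Phi(\xi)$ is the same element of $\cs(K)$. The open inclusion $K\hookrightarrow G\ltimes K$ induces a contractive $*$-homomorphism $\cs(K)\to\cs(G\ltimes K)$. Together with the $\mathrm{C}^*$-identity, this gives $\|\xi\|\le\|\Phi(\xi)\|_\infty$, where $\|\cdot\|_\infty$ is the supremum norm on $\Gamma_c(U,s^*\mathcal A)$.

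Since $\Phi(C_c(U;K))$ is dense there, every $*$-representation $\pi$ of $C_c(G\ltimes K)$ extends uniquely to each $\Gamma_c(U,s^*\mathcal A)$. The paper then checks directly that these pieces assemble into a well-defined $*$-representation $\tilde\pi$ of $\Gamma_c(G,s^*\mathcal A)$ with $\tilde\pi\circ\Phi=\pi$. A partition-of-unity argument handles the non-Hausdorff case.

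You use the same map $\cs(K)\to\cs(G\ltimes K)$ for the $\cs(K)$-part of a covariant pair. After that you must:
\begin{itemize}
\item build the partial isometries $v_U$ as strong limits on the essential subspaces of the ideals over $s(U)$;
\item verify the covariance relations;
\item invoke the identification of \'etale groupoid crossed products with inverse semigroup crossed products.
\end{itemize}
That last identification contains, internally, exactly the bookkeeping the paper does by hand. Your route is conceptually tidy and reuses standard theory; the paper's is short and self-contained.

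Two smaller corrections.
\begin{itemize}
\item The easy inequality $\|\Phi(\xi)\|\le\|\xi\|$ needs no disintegration theorem; Renault's would in any case require second countability, which is not assumed here. For an \'etale groupoid, every $*$-representation of $C_c(G\ltimes K)$ on a Hilbert space is bounded by the universal norm. So composing a representation of $G\ltimes\cs(K)$ with $\Phi$ suffices, and this is what the paper means by ``universality''.
\item $\Phi$ is not bijective from $C_c(G\ltimes K)$ onto $\Gamma_c(G,s^*\mathcal A)$, because the fibres $\mathcal A_x=\cs(\tau^{-1}(x))$ are completions of $C_c(\tau^{-1}(x))$. It is injective with dense image, which is all you actually use later.
\end{itemize}
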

\setlength{\parindent}{0cm} \setlength{\parskip}{0cm}

\begin{proof}
We first argue that $\Phi$ as above is well-defined on $C_c(G \ltimes K)$ - by linearity it suffices to consider $\xi \in C_c(U;K) = C_c(\{(g,k) \in G \ltimes K \mid g \in U \})$ for each open bisection $U \subseteq G$. Let $\tau \colon K \to G^0$ be the anchor map for the $G$-space $K$. Then, for fixed $g \in U$, the function $k \mapsto \xi(g,k)$ is an element of $C_c(\tau^{-1}(s(g))) \subseteq A_{s(g)}$. To see that $g \mapsto [k \mapsto \xi(g,k)] \colon U \to s^* \mathcal A$ is continuous, fix $g_0 \in U$ and pick $\gamma \in C_c(U)$ with $\gamma = 1$ on an open neighbourhood $V \subseteq U$ of $g_0$. The function 
\begin{align*}
\eta \colon K & \to \mathbb C \\
k & \mapsto \sum_{g \in G_{\tau(k)}} \gamma(g) \xi(g, k).
\end{align*}
is continuous and compactly supported. We compute $\eta(k) = \xi(g,k)$ for $g \in V$ and $k \in \tau^{-1}(s(g))$. It follows that $g \mapsto [k \mapsto \xi(g,k)] = \eta_{s(g)}$ is continuous on $V$. We obtain a $*$-homomorphism $C_c(G \ltimes K) \to \Gamma_c(G, s^* \mathcal A)$ which by universality extends to a $*$-homomorphism $\Phi \colon \cs(G \ltimes K) \to G \ltimes \cs(K)$.
\setlength{\parindent}{0cm} \setlength{\parskip}{0.5cm}

For an open bisection $U \subseteq G$, the inclusion $\Phi \colon C_c(U;K) \hookrightarrow \Gamma_c(U, s^* \mathcal A)$ is isometric by the $\cs$-identity. Indeed, for $\xi \in C_c(U;K)$, we have $\Phi(\xi)^*\Phi(\xi) \in \Gamma_c(s(U),\mathcal A) \subseteq C_c(K) \subseteq \cs(K)$. The open inclusion $K \hookrightarrow G \ltimes K$ induces a $*$-homomorphism $\cs(K) \to \cs(G \ltimes K)$ which sends $\Phi(\xi)^* \Phi(\xi)$ to $\xi^* \xi$, and thus $\lVert \Phi(\xi) \rVert \geq \lVert \xi \rVert$. The image is dense because $\Gamma_c(U, s^* \mathcal A)$ carries the supremum norm over $U$ and the image in each fibre $(s^* \mathcal A)_g$ for $g \in U$ contains the dense subset $C_c(\tau^{-1}(s(g))$.

Suppose $\pi \colon C_c(G \ltimes K) \to \mathcal B(\mathcal H)$ is a $*$-representation. Then we get a unique extension $\pi_U \colon \Gamma_c(U,s^* \mathcal A) \to \mathcal B(\mathcal H)$ for each $U$. These subspaces span $\Gamma_c(G,s^*\mathcal A)$, and we claim the extension $\widetilde \pi \colon \sum_{i=1}^n \xi_i \mapsto \sum_{i=1}^n \pi_{U_i}(\xi_i) \colon \Gamma_c(G, s^* \mathcal A) \to \mathcal B(\mathcal H)$ of $\pi$ to $\Gamma_c(G, s^* \mathcal A)$ is well-defined, where $\xi_i \in \Gamma_c(U_i,s^* \mathcal A)$ for some open bisection $U_i$. Indeed, if $\sum_{i=1}^n \xi_i = 0$, consider $V = \{ g \in U_1 \mid f(g) \ne 0 \} \subseteq U_1$, which is open and Hausdorff and covered by $\{U_i \cap U_1\}_{i \geq 2}$. Via a partition of unity, we may find $\eta_i \in \Gamma_c(U_i \cap U_1, s^* \mathcal A)$ with $\xi_1 = \sum_{i=2}^n \eta_i$, so that $\sum_{i=2}^n (\xi_i + \eta_i) = 0$. By induction we can assume $\sum_{i=2}^n \pi_{U_i}(\xi_i + \eta_i) = 0$, but since $\pi_{U_i}(\eta_i) = \pi_{U_1}(\eta_i)$, we conclude $\sum_{i=1}^n \pi_{U_i}(\xi_i) = 0$. The extension $\widetilde \pi$ is by construction a $*$-representation, hence $\Phi$ is isometric and thus a $*$-isomorphism.
\end{proof}
\setlength{\parindent}{0cm} \setlength{\parskip}{0.5cm}

Let $\Omega \colon J \to K$ be a $G$-equivariant étale correspondence. Then there is an étale correspondence $G \ltimes \Omega \colon G \ltimes J \to G \ltimes K$ with underlying space $G \ltimes \Omega = G \times_{G^0} \Omega$, anchor maps given by $\rho(g,\omega) = \rho(g . \omega)$ and $\sigma(g,\omega) = \sigma(\omega)$ and actions given by $(g_1,j) . (g_2,\omega) = (g_1 g_2, (g_2^{-1} . j) . \omega)$ and $(g_1,\omega) . (g_2,k) = (g_1 g_2, (g_2^{-1} . \omega) . k)$. This construction is compatible with composition: given another $G$-equivariant étale correspondence $\Lambda \colon K \to L$, there is a canonical isomorphism $(G \ltimes \Omega) \times_{G \ltimes K} (G \ltimes \Lambda) \to G \ltimes (\Omega \times_K \Lambda)$ given by $[(g_1,\omega),(g_2,\lambda)] \mapsto (g_1 g_2, [g_2^{-1} . \omega, \lambda])$.

\begin{proposition}\label{crossed product correspondence identification}
Let $G$ be an étale groupoid and let $\Omega \colon J \to K$ be a $G$-equivariant topological correspondence. Then $G \ltimes \cs(\Omega) \colon G \ltimes \cs(J) \to G \ltimes \cs(K)$ is isomorphic to $\cs(G \ltimes \Omega) \colon \cs(G \ltimes J) \to \cs(G \ltimes K)$. 
\end{proposition}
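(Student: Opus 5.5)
The plan is to reduce to the untwisted case via Proposition~\ref{crossed product identification}, which already identifies $G \ltimes \cs(J) \cong \cs(G \ltimes J)$ and $G \ltimes \cs(K) \cong \cs(G \ltimes K)$ through the maps $\Phi_J$, $\Phi_K$ sending $\xi$ to $g \mapsto [j \mapsto \xi(g,j)]$. I will write down the analogous map for the correspondence,
\[
\Psi \colon C_c(G \ltimes \Omega) \to \Gamma_c(G, s^*\mathcal E), \qquad \xi \mapsto \bigl[g \mapsto [\omega \mapsto \xi(g,\omega)]\bigr],
\]
where $\mathcal E \to G^0$ is the Banach bundle of $\cs(\Omega)$, whose fibre at $x$ is $\cs(\tau_\Omega^{-1}(x))$. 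Recall that $G \ltimes \cs(\Omega)$ is the completion of $\Gamma_c(G, s^*\mathcal E)$ as a Hilbert $G \ltimes \cs(K)$-module with left $G\ltimes\cs(J)$-action. Well-definedness and continuity of $\Psi(\xi)$ will follow by the bump-function argument of Proposition~\ref{crossed product identification}: for $\xi$ supported over an open bisection $U$, multiplying by $\gamma \in C_c(U)$ equal to $1$ near $g_0$ exhibits $g \mapsto \Psi(\xi)(g)$ locally as $\eta_{s(g)}$ for a fixed $\eta \in C_c(\Omega)$.

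Next I check that $\Psi$ intertwines all the structure on the dense subspaces: $\Psi(a\cdot\xi) = \Phi_J(a)\cdot\Psi(\xi)$, $\Psi(\xi\cdot b) = \Psi(\xi)\cdot\Phi_K(b)$, and $\langle \Psi(\eta),\Psi(\xi)\rangle = \Phi_K(\langle\eta,\xi\rangle)$. Each is a direct computation comparing the convolution formulas of the Appendix for $G\ltimes\Omega$ (with actions $(g_1,j)\cdot(g_2,\omega) = (g_1g_2,(g_2^{-1}\cdot j)\cdot\omega)$ etc.) against the crossed-product formulas $(f\cdot\zeta)(g) = \sum_{h} f(h)\,\alpha_h(\zeta(h^{-1}g))$ and $\langle\zeta,\zeta'\rangle(g) = \sum_h \alpha_{h^{-1}}(\langle \zeta(h),\zeta'(hg)\rangle)$. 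After unwinding, both sides become the same double sums over $G$ and over $\Omega_{r(\cdot)}$, using $G$-equivariance of the actions of $J$ and $K$ on $\Omega$. I expect this bookkeeping, particularly the inner product, where the $G$-action on $K$ must be threaded through correctly, to be the main (if routine) obstacle.

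Preservation of inner products makes $\Psi$ isometric for the module norms, so it extends to an isometry $\cs(G\ltimes\Omega) \to G\ltimes\cs(\Omega)$ compatible with $\Phi_J,\Phi_K$. For surjectivity I argue fibrewise as in Proposition~\ref{crossed product identification}: $\Gamma_c(G,s^*\mathcal E)$ is spanned by sections over bisections $U$ carrying the supremum-type norm, and the image of $C_c(U;\Omega)$ hits the dense subspace $C_c(\tau_\Omega^{-1}(s(g)))$ in each fibre. A partition-of-unity/Stone--Weierstrass argument for Banach bundles (e.g.\ \cite[Proposition C.24]{Wil07}) then gives density of the image, and hence $\Psi$ is an isomorphism of $\cs$-correspondences.
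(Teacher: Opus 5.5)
Your proposal is correct and matches the paper's proof: the same map $\xi \mapsto [g \mapsto [\omega \mapsto \xi(g,\omega)]]$ into $\Gamma_c(G,s^*\mathcal E)$, the same three intertwining identities with $\Phi_J$ and $\Phi_K$ (isometry coming from the inner-product identity), and density from the supremum norm on sections over a bisection together with fibrewise density of $C_c(\tau_\Omega^{-1}(x))$. One bookkeeping caution: the convolution and inner-product formulas you quote are the standard ones for sections of $r^*\mathcal A$ and $r^*\mathcal E$, whereas your map lands in sections of $s^*\mathcal E$. When you expand terms, either use the $s^*$-versions (e.g.\ $(f\cdot\zeta)(g) = \sum_{hk=g} \alpha_{k^{-1}}(f(h))\,\zeta(k)$) or first transport along $\zeta \mapsto [g \mapsto \beta_g(\zeta(g))]$.
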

\setlength{\parindent}{0cm} \setlength{\parskip}{0cm}

\begin{proof}
Consider the $G$-Banach bundle $\mathcal E \to G^0$ associated to the $G$-Hilbert $\cs(K)$-module $\cs(\Omega)$, whose fibre at $w \in G^0$ is a completion of $C_c(\tau_\Omega^{-1}(w))$. By the same argument as in Proposition \ref{crossed product identification}, the map 
\begin{align*}
\Phi_\Omega \colon C_c(G \ltimes \Omega) & \to \Gamma_c(G, s^* \mathcal E) \\
\xi & \mapsto [g \mapsto [\omega \mapsto \xi(g,\omega)]]
\end{align*}
is well-defined. Consider the $*$-isomorphisms $\Phi_J \colon \cs(G \ltimes J) \to G \ltimes \cs(J)$ and $\Phi_K \colon \cs(G \ltimes K) \to G \ltimes \cs(K)$ from Proposition \ref{crossed product identification}. Expanding terms shows that for $a \in C_c(G \ltimes J)$, $b \in C_c(G \ltimes K)$ and $\xi,\eta \in C_c(G \ltimes \Omega)$, we have 
\begin{align*}
\Phi_J(a) . \Phi_\Omega(\xi) & = \Phi_\Omega(a . \xi), \\
\Phi_\Omega(\xi) . \Phi_K(b) & = \Phi_\Omega(\xi . b), \\
\langle \Phi_\Omega(\eta), \Phi_\Omega(\xi) \rangle & = \Phi_K(\langle \eta, \xi \rangle).
\end{align*}
Moreover, $\Phi_\Omega$ has dense image by the same argument as in Proposition \ref{crossed product identification}; for each open bisection $U \subseteq G$ the image of $C_c(U;\Omega)$ in $\Gamma_c(U,s^* \mathcal E)$ is dense because the latter carries the supremum norm over $U$ and the image in each fibre is dense. We conclude that $\Phi_\Omega$ extends to an isomorphism of $\cs$-correspondences.
\end{proof}
\setlength{\parindent}{0cm} \setlength{\parskip}{0.5cm}

In summary, we have constructed the following diagram of functors, and shown that it commutes.
\[\begin{tikzcd}
	{{\mathrm{\acute{E}taleCorr}}^G } & {{\cs\mathrm{Corr}}^G} \\
	{{\mathrm{\acute{E}taleCorr}}} & {{\cs\mathrm{Corr}}.}
	\arrow["\cs", from=1-1, to=1-2]
	\arrow["{G \ltimes-}"', from=1-1, to=2-1]
	\arrow["{G \ltimes -}"', from=1-2, to=2-2]
	\arrow["\cs", from=2-1, to=2-2]
\end{tikzcd}\]
Here, ${\mathrm{\acute{E}taleCorr}}^G$ is the category of \'etale $G$-groupoids, with morphisms given by isomorphism classes of $G$-equivariant \'etale correspondences, and ${{\cs\mathrm{Corr}}^G}$ is the analogous category of $G$-$\cs$-algebras. Although there most likely exists a $2$-categorical refinement of this, we have no need for it in this paper.

\end{document}